\newtheorem{mainthm}{Theorem}
\newtheorem{thm}{Theorem}[section]
\newtheorem{lemma}[thm]{Lemma}
\newtheorem{cor}[thm]{Corollary}
\newtheorem{claim}{Claim}[thm]
\newtheorem{prop}[thm]{Proposition}
\newtheorem{fact}[thm]{Fact}
\newtheorem{subclaim}{Subclaim}[claim]
\theoremstyle{definition}
\newtheorem{defn}[thm]{Definition}
\newtheorem{notation}[thm]{Notation}
\newtheorem{q}[thm]{Question}
\theoremstyle{remark}
\newtheorem{remark}[thm]{Remark}
\renewcommand{\mid}{\mathrel{|}\allowbreak}
\renewcommand{\restriction}{\mathbin\upharpoonright}
\DeclareMathOperator{\acc}{acc}
\DeclareMathOperator{\reg}{Reg}
\DeclareMathOperator{\nacc}{nacc}
\DeclareMathOperator{\cf}{cf}
\DeclareMathOperator{\dom}{dom}
\DeclareMathOperator{\im}{Im}
\DeclareMathOperator{\otp}{otp}
\DeclareMathOperator{\h}{ht}
\DeclareMathOperator{\p}{P}
\newcommand\s{\subseteq}
\newcommand\T{\mathcal T}
\newcommand\sq{\sqsubseteq}
\newcommand\br{\blacktriangleright}
\newcommand*\axiomfont[1]{\textsf{\textup{#1}}}
\newcommand\one{\mathbbm{1}}
\newcommand\forces{\Vdash}
\newcommand\zfc{\axiomfont{ZFC}}
\newcommand\pmea{\axiomfont{PMEA}}
\newcommand\ma{\axiomfont{MA}_{\omega_1}}
\newcommand\ch{\axiomfont{CH}}
\newcommand\ns{\textup{NS}}
\newcommand\VisL{\ensuremath{\axiomfont{V}=\axiomfont{L}}}
\newcommand\LE{\trianglelefteq}
\renewcommand\ll{<_{\mathbb{Q}_\kappa}}
\newcommand{\AlephOrBeth}{\ifnum\value{enumi}=1\aleph\else\beth\fi}
\newcommand{\leqdot}{\mathrel{\mathpalette\leqdot@{\leq{r}}}\leq}
\newcommand{\leqdot@}[2]{\leqdot@@#1#2}
\newcommand{\leqdot@@}[3]{\begingroup\sbox\z@{$\m@th#1#2$}\makebox[0pt][l]{\makebox[\wd\z@][#3]{\raisebox{0.15\ht\z@}[0pt][0pt]{$\m@th#1\cdot$}}}\endgroup}
\title{The power of trees}
\author{Ari Meir Brodsky}
\address{Mathematics Department, Shamoon College of Engineering, 56 Bialik St., Be'er Sheva, Israel.}
\urladdr{\url{https://en.sce.ac.il/faculty/ari_brodsky}}
\email{arimebr@sce.ac.il}
\author{Assaf Rinot}
\address{Department of Mathematics, Bar-Ilan University, Ramat-Gan 52900, Israel.}
\urladdr{http://www.assafrinot.com}
\author{Shira Yadai}
\address{Department of Mathematics, Bar-Ilan University, Ramat-Gan 52900, Israel.}
\urladdr{https://orcid.org/0009-0003-9757-3349}
\email{greenss@biu.ac.il}
\subjclass[2010]{Primary 03E35, 54B10. Secondary 54F05, 54D20, 54D15}
\begin{document}
\begin{abstract} We give two consistent constructions of trees $T$ whose finite power $T^{n+1}$ is sharply different from $T^n$:
\begin{itemize}
\item An $\aleph_1$-tree $T$ whose interval topology $X_T$ is perfectly normal, but $(X_T)^2$ is not even countably metacompact.
\item For an inaccessible $\kappa$ and a positive integer $n$, a $\kappa$-tree such that all of its $n$-derived trees are Souslin and all of its $(n+1)$-derived trees are special.
\end{itemize}
\end{abstract}
\date{Preprint as of January 6, 2026. For updates, visit \textsf{http://p.assafrinot.com/68}.}
\maketitle

\section{Introduction}\label{section:introduction}
This paper is a contribution to the study of features of structures that are not preserved by taking products.
As a simple example, the Sorgenfrey line \cite{MR20770} constitutes a normal topological space whose square is not normal.
As a more substantial example, a space is \emph{Dowker} iff it is normal, yet its product with the unit interval is not normal.
The naming comes from Dowker's theorem \cite{dowker}
that $X\times[0,1]$ is normal iff $X$ is normal and countably metacompact.
Recall that a topological space is \emph{countably metacompact (cmc)} iff
every countable open cover admits a point-finite open refinement.
A property stronger than cmc is that of being \emph{perfect}: a topological space is perfect iff all of its closed subsets are $G_\delta$.
An even stronger property, \emph{perfectly normal}, is equivalent to the conjunction of perfect and normal.

Of special interest is whether a tree $\mathbf T=(T,{<_T})$ equipped with
the \emph{interval topology},\footnote{The definition of the interval topology may be found in Section~\ref{secthma}.}
denoted $X_{\mathbf T}$, can satisfy the above properties.
To compare, while there are $\zfc$ examples of Dowker spaces \cite{Rudin_first_Dowker_ZFC_example,Balogh_space,MR1605988,MR4900695},
a space of the form $X_{\mathbf T}$ can never be Dowker \cite{MR1607401}.

Any antichain of $\mathbf T$ is closed discrete in $X_{\mathbf T}$, so if $\mathbf T$ is a special $\aleph_1$-tree, then $X_{\mathbf T}$ is perfect.
Nyikos (see \cite[Theorem~4.1]{MR577767}) proved that almost-Souslin $\aleph_1$-trees are cmc,
and Hanazawa \cite[Theorem~3]{MR679075} proved that $\mathbb R$-embeddable almost-Souslin $\aleph_1$-trees are perfect.\footnote{Every perfect $\aleph_1$-tree is $\mathbb R$-embeddable and cmc.
A Souslin tree cannot be $\mathbb R$-embeddable, hence the need to focus on finer notions.}
It follows that a non-cmc $\aleph_1$-tree must be quite unusual, as it can be neither special nor almost-Souslin.
A construction of such a spectacular creature was given first by Fleissner assuming $\diamondsuit^+(\omega_1)$ \cite[\S3]{MR577767},
and then by Hanazawa assuming $\diamondsuit(\omega_1)$ \cite[Theorem~1]{MR679075}.

The first main result of this paper is an extension of the above works to control not only $X_{\mathbf T}$, but also $(X_{\mathbf T})^2$, and in an independent way.
There are quite a few Boolean combinations of what $X_{\mathbf T}$ and $(X_{\mathbf T})^2$ could satisfy,
so we decided to focus on one particular combination that we feel demonstrates well how the construction machinery developed in \cite{paper65} can contribute to settling this kind of classical questions.
Specifically, we establish (consistently) the non-productivity of cmc in the class of spaces determined by the interval topology on trees, as follows.

\begin{mainthm}\label{thma}
Suppose that $\diamondsuit^*(\omega_1)$ holds. Then there exists an $\mathbb R$-embeddable almost-Souslin $\aleph_1$-tree $\mathbf T$ such that:
\begin{itemize}
\item $X_{\mathbf T}$ is perfectly normal;
\item $(X_{\mathbf T})^2$ is not cmc.
\end{itemize}
\end{mainthm}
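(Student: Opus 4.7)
The strategy is to construct $\mathbf T$ by recursion on its levels, using the microscopic machinery of \cite{paper65} driven by $\diamondsuit^*(\omega_1)$. Partition $\acc(\omega_1)$ into disjoint stationary sets $S_0 \sqcup S_1$, and fix a $\diamondsuit^*$-sequence $\langle \mathcal A_\alpha : \alpha < \omega_1 \rangle$, so that for every $A \subseteq \omega_1$ the set $\{\alpha < \omega_1 : A \cap \alpha \in \mathcal A_\alpha\}$ contains a club. During the recursion we arrange three requirements simultaneously: at each $\alpha \in S_0$, seal against every predicted antichain confined below $\alpha$ (forcing the tree to be almost-Souslin); at every level, extend a coherent $\mathbb R$-valued labeling that keeps distinct branches labeled by distinct reals (making $\mathbf T$ $\mathbb R$-embeddable); and at each $\alpha \in S_1$, install a carefully chosen ``mate map'' $\pi_\alpha : T_\alpha \to T_\alpha$ whose graph will drive the two-dimensional obstruction to cmc in the square.

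The positive side of the statement then follows readily. By Hanazawa's theorem, $\mathbb R$-embeddability together with almost-Souslinness yield perfectness of $X_{\mathbf T}$; normality of the interval topology can be verified directly using the coherent ladder system supplied by the construction, so $X_{\mathbf T}$ is perfectly normal. The real work is engineering the failure of cmc in $(X_{\mathbf T})^2$. By Ishikawa's characterization, it suffices to exhibit a decreasing sequence $\langle F_n : n < \omega \rangle$ of closed subsets of $(X_{\mathbf T})^2$ with $\bigcap_n F_n = \emptyset$ such that for any open $V_n \supseteq F_n$ one has $\bigcap_n V_n \ne \emptyset$. The natural candidate is built from the mate graphs: one takes $F_n$ to be (roughly) the closure in the square of the set of pairs $(x, \pi_\alpha(x))$ with $\alpha \in S_1$ and $\h(x) \ge \omega \cdot n$.

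The principal obstacle is coordinating the $\mathbb R$-embeddability of the second requirement with the thickness needed for the third. $\mathbb R$-embeddability keeps the tree horizontally narrow and is generally friendly toward productive regularity properties; a naive construction would then allow open neighborhoods in the square to peel the mate pairs apart level by level, defeating the non-cmc witness. The machinery of \cite{paper65} resolves this by parameterizing the coherent cofinal sequences chosen at limits so that each $\pi_\alpha(x)$ sits on a distinct branch from $x$ yet clusters to $x$ arbitrarily closely in the $\mathbb R$-labeling. This clustering forces any open $V_n \supseteq F_n$ to capture cofinally many mate pairs, producing a common point in $\bigcap_n V_n$. Rigorously verifying this mechanism, while simultaneously checking that the single copy $X_{\mathbf T}$ remains perfect, is the combinatorial heart of the proof.
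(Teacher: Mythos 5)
Your outline reproduces the general shape of the paper's argument (a $\diamondsuit^*$-driven recursive construction of an $\mathbb R$-embeddable tree, Hanazawa's theorem for perfectness, and the Gittings/Ishikawa characterization of cmc via decreasing vanishing closed sequences), but the core device for killing cmc in the square is wrong as stated. Your closed sets $F_n$, the closures of the mate pairs of height at least $\omega\cdot n$, do not vanish: since $\omega_1$ has uncountable cofinality, every chosen pair whose level exceeds $\omega^2$ belongs to every $F_n$, so $\bigcap_n F_n\neq\emptyset$ and the sequence is not even a legitimate input to the criterion. No grading by height can work, because the levels carrying the designated pairs are cofinal in $\omega_1$. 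What is needed instead (and what the paper does) is to attach to each chosen pair $a_\alpha=(\mathbf b^\alpha_{\langle0\rangle},\mathbf b^\alpha_{\langle1\rangle})$ an \emph{integer} $g(\alpha)$ coming from the $\diamondsuit$-guess at $\alpha$: the construction predicts a function $f:(T\restriction\alpha)^2\to\omega$ and threads the two branches so that $f$ takes the value $g(\alpha)$ cofinally often below $\alpha$ along the pair. The witnessing closed sets are then $D_n:=\{a_\epsilon\mid g(\epsilon)\ge n\}$, which vanish because each $g(\epsilon)$ is finite, and which are closed discrete in $(X_{\mathbf T})^2$ because the $a_\epsilon$ are arranged to form a $\subseteq^2$-antichain of $T^2$ (antichains of $T^2$ are closed discrete in the square); your ``closure of the mate graph'' gives no such control, and a full mate map $\pi_\alpha$ on each level is both unnecessary and hard to close off. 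Given vanishing open $U_n\supseteq D_n$, one sets $f(x_0,x_1):=$ the least $n$ with $(x_0,x_1)\notin U_n$, finds by the prediction an $\alpha$ with $a_\alpha\in D_{g(\alpha)}\subseteq U_{g(\alpha)}$ while $f\le g(\alpha)$ cofinally along the branch pair below $\alpha$, and contradicts openness of $U_{g(\alpha)}$. This anti-uniformization grading---not ``clustering in the $\mathbb R$-labeling''---is the missing idea; closeness of real labels is what drives the almost-Souslin and perfectness side, not the square.

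Two further points are underestimated in your plan. First, keeping the chosen pairs a $\subseteq^2$-antichain while simultaneously being able to route pairs of branches through predicted targets at limit stages requires genuine machinery (the paper introduces a tree-coarsening order on $T^2$ and a notion of coordinated levels via $q$-elevators precisely to make these two demands compatible with $\mathbb R$-embeddability). Second, normality of $X_{\mathbf T}$ is not ``verified directly'': it is obtained by proving property $\gamma$ through a careful interplay of the ladder system, the real-valued labels, and the same elementary-submodel reflection used for almost-Souslinness, so this step needs an actual argument rather than an appeal to the construction's coherence.
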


We now turn to describe the second main result of this paper, dealing with the nonproductivity of the property of being nonspecial in the class of trees.\footnote{See Definition~\ref{defn41} and Remark~\ref{rmk42}.}
The literature has several consistent examples of $\lambda^+$-Souslin trees whose squares are special (see \cite{MR419229} or \cite[Proposition~1.4.15]{MR2638327} for $\lambda:=\aleph_0$,
\cite[\S4]{sh:221} for $\lambda$ singular, and \cite[\S6]{paper65} for a uniform treatment for all $\lambda$).
In this paper, we give the first consistent example of a Souslin tree of inaccessible height whose square is special.
This is obtained by introducing a new instance $\p_{<}(\ldots)$ of the proxy principle from \cite{paper22}, proving that this instance is consistent,
and presenting a construction of the desired tree from it, as follows.

\begin{mainthm}\label{thmb} Suppose that $\kappa$ is a regular uncountable cardinal.
\begin{enumerate}[label=\textup{(\arabic*)}]
\item There is a ${<}\kappa$-strategically-closed forcing $\mathbb P$ of size $\kappa^{<\kappa}$ such that the proxy principle $\p_{<}(\kappa,2,\allowbreak{\sq},\kappa)$ holds in the generic extension by $\mathbb P$.
\item If $\p_{<}(\kappa,2,\allowbreak{\sq},\kappa)$ holds,
then for every positive integer $n$, there exists a $\kappa$-Souslin tree $\mathbf T$ such that:
\begin{itemize}
\item all $n$-derived trees of $\mathbf T$ are Souslin;
\item all $(n+1)$-derived trees of $\mathbf T$ are special.
\end{itemize}
\end{enumerate}
\end{mainthm}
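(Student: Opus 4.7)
The plan is to tackle the two parts in order. For Part (1), I would define $\mathbb P$ to consist of bounded $\sq$-coherent partial approximations to a witness of $\p_{<}(\kappa,2,\sq,\kappa)$: a condition is a matrix $p=\langle C^i_\alpha : \alpha\le \gamma,\ i<2\rangle$ with $\gamma<\kappa$, each $C^i_\alpha$ a club in $\alpha$, satisfying the coherence demand that $\bar\alpha\in\acc(C^i_\alpha)$ implies $C^i_{\bar\alpha}=C^i_\alpha\cap\bar\alpha$, together with whatever extra strictness clauses are built into the ``$<$'' decoration. The ordering is end-extension. Cofinality preservation would follow from showing that $\mathbb P$ is $\kappa$-strategically closed via a standard winning strategy on a descending chain (using $\kappa$ regular), and that it satisfies the $\kappa^+$-chain condition by a $\Delta$-system argument using $\kappa^{<\kappa}=\kappa$. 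The proxy principle itself in $V[G]$ would then come from density: for each potential failure of a guessing instance, I would exhibit a denser set of conditions meeting that instance, leveraging that $\kappa^{<\kappa}=\kappa$ to enumerate the possible predictions in the ground model.

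For Part (2), given a witness $\vec C$ to $\p_{<}(\kappa,2,\sq,\kappa)$, I would construct $\mathbf T$ by recursion on $\alpha<\kappa$, following the \emph{microscopic approach} of \cite{paper65}. Successor levels are trivial; the combinatorics live at limit $\alpha$, where the two clubs $C^0_\alpha,C^1_\alpha$ provided by the principle are used in parallel to determine which branches through $\mathbf T\restriction\alpha$ are continued to level $\alpha$. One club would be devoted to sealing predicted maximal antichains of the $n$-th derived tree (guaranteeing all $n$-derived trees are Souslin), while the other would drive the assembly of a specializing function on the $(n+1)$-derived trees (guaranteeing all $(n+1)$-derived trees are special). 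A Fodor-style argument applied to the proxy guesses would then establish both conclusions at the end.

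The main obstacle I anticipate is arranging the limit-stage combinatorics so that the two goals do not interfere: the sealing performed on $n$-tuples must not inadvertently force the $(n+1)$-derived tree to be Souslin, and the thinning performed for specialization at the $(n+1)$-st level must not collapse some fresh antichain of the $n$-derived tree into a chain. The availability of two clubs at each limit (the ``$2$'' in $\p_{<}(\kappa,2,\sq,\kappa)$) is designed precisely to provide two independent threads of control, and the strict variant ``$<$'' should be what supplies the extra strength needed to separate dimension $n$ from dimension $n+1$ — in particular, to distinguish ``strict comparability'' among $(n+1)$-tuples from mere comparability of their $n$-projections. I expect the technical heart of the proof — and the reason a new proxy instance was introduced in the first place — to be the verification that these two threads can be run simultaneously at every limit $\alpha<\kappa$ without either sabotaging the other, and that this does not collapse the construction into one of the already-known examples from \cite{MR384542,MR419229,sh:221,paper65} where $\kappa$ is a successor of a (possibly singular) cardinal rather than inaccessible.
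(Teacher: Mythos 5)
Your proposal misreads what the new principle $\p_{<}(\kappa,2,{\sq},\kappa)$ actually is, and as a result both halves of your plan miss the key idea. The parameter $2$ does not provide two clubs per level to be used as ``two independent threads of control''; in the proxy-principle notation it bounds the width by $2$, i.e.\ there is a \emph{single} coherent $C$-sequence $\langle C_\alpha\mid\alpha<\kappa\rangle$. The subscript ``$<$'' is not a strictness condition on comparability of tuples either: it is the requirement that the set of \emph{lower-regressive levels} $R(\vec C)=\{\alpha\mid\forall\beta>\alpha\,(\otp(C_\beta\cap\alpha)<\alpha)\}$ covers a club, which is the replacement (workable at an inaccessible) for the usual demand $\otp(C_\alpha)\le\xi$. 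Your forcing conditions --- matrices of two coherent club-sequences with end-extension --- omit precisely this ingredient; the paper's conditions carry an extra closed set $D^p$ of levels at which lower-regressiveness is promised, and the whole design of the poset (and of the strategic-closure argument) revolves around maintaining Clause (4)(b) for $D^p$ while still forcing the guessing clause. Moreover, your plan to verify the guessing in $V[G]$ ``by density, enumerating the possible predictions in the ground model'' does not address that a single $\alpha$ must simultaneously guess $\kappa$-many cofinal sets $B_i$ ($i<\alpha$) given only in the extension; the paper gets this by an $\omega$-step fusion producing $\otp(\nacc(C_\gamma)\cap\dot B)=\gamma$ for one name (Corollary~\ref{cor14}), deriving $\diamondsuit(E^\kappa_\sigma)$, and then upgrading to simultaneous guessing via the postprocessing function of \cite[Lemma~3.8]{paper28}. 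Note also the intrinsic tension your sketch never confronts: any witness to such guessing has stationarily many $\alpha$ with $\otp(C_\alpha)=\alpha$, so one cannot simply force $\otp(C_\alpha)<\alpha$ on a club.

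For Part (2), the paper does not run a new limit-stage construction balancing ``sealing'' against a hand-built specializing function, and specialness is not obtained by assembling a regressive map during the recursion. Instead it reuses the existing construction of \cite[\S6]{paper65} verbatim (with $\lambda$ replaced by $\kappa$) to obtain a $\chi$-free $\kappa$-Souslin tree $T$ together with a strictly increasing map $f:T^\chi\rightarrow\mathbb Q_\kappa$ (with $\chi:=n+1$, $\chi$-freeness already gives that all $n$-derived trees are Souslin). The only genuinely new content is Proposition~\ref{p33} --- an increasing map into $\mathbb Q_\kappa$ satisfying $f[(T_\alpha)^\chi]\s{}^{<\omega}\alpha$ on a club yields specialness in Todor\v{c}evi\'c's sense --- together with the verification that lower-regressiveness of the guiding $C$-sequence (Clause~\ref{321} of Definition~\ref{def32}) forces exactly this club condition on $f$, via the order-type computation $\otp(D_\alpha\setminus(\gamma+1))<\alpha$ at levels in $R(\vec C)$. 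So the ``technical heart'' you anticipate (non-interference of two threads at limits) is not where the difficulty lies; the difficulty is making the $\mathbb Q_\kappa$-embedding take small values at club-many levels despite $\otp(C_\alpha)=\alpha$ holding stationarily often, and your proposal contains no mechanism for that.
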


To those familiar with $C$-sequences and proxy principles, we explain here the obstacle we had to overcome in order to obtain such a tree for an inaccessible cardinal $\kappa$.
As listed in~\cite[Fact~1]{paper29}, $\kappa$-trees are intimately connected with $C$-sequences over $\kappa$.
In particular, it is a classical theorem going back to Jensen (see~\cite[p.~283]{MR309729}) that there is a special $\lambda^+$-Aronszajn tree
iff there is a $C$-sequence $\vec C=\langle C_\alpha\mid\alpha<\lambda^+\rangle$ satisfying the following:
\begin{enumerate}[label=(\Roman*)]
\item\label{rI} $\vec C$ is weakly coherent and $\otp(C_\alpha)\le\lambda$ for every $\alpha<\lambda^+$.
\end{enumerate}

Jensen's theorem was extended by Krueger \cite{MR3078820},
who proved that for every regular uncountable cardinal $\kappa$, there is a special $\kappa$-Aronszajn tree
iff there is a $C$-sequence $\vec C=\langle C_\alpha\mid\alpha<\kappa\rangle$ satisfying the following:\footnote{Strictly speaking, \cite[Definition~1.1]{MR3078820} is concerned with the existence of a $C$-sequence \emph{over a club} in $\kappa$. Extending it to get a $C$-sequence as in \ref{rII} is a trivial task.}
\begin{enumerate}[label=(\Roman*),resume]
\item\label{rII} $\vec C$ is weakly coherent and $\otp(C_\alpha)<\alpha$ for club many $\alpha<\kappa$.
\end{enumerate}

For our purpose, a sequence as in \ref{rII} is problematic, being in conflict with the fact that any witness $\vec C$
to a strong enough instance of the proxy principle must have stationarily many $\alpha$'s for which $|C_\alpha|=|\alpha|$,\footnote{See the argument of \cite[Remarks~3.16(1)]{paper32}.}
which, at the level of an inaccessible, means that stationarily many $\alpha<\kappa$ must satisfy $\otp(C_\alpha)=\alpha$.
The mitigation here comes from the work in \cite{paper71},
which pinpointed the impact of features of $\vec C=\langle C_\alpha\mid\alpha<\kappa\rangle$ on the corresponding canonical tree $T(\rho^{\vec C}_0)$
arising from walks on ordinals. It turned out that $T(\rho^{\vec C}_0)$ is a special $\kappa$-Aronszajn tree provided that the following holds:
\begin{enumerate}[label=(\Roman*),resume]
\item\label{rIII} $\vec C$ is weakly coherent and club many $\alpha<\kappa$ satisfy $\otp(C_\beta\cap\alpha)<\alpha$ for every $\beta\in\kappa\setminus\{\alpha\}$.
\end{enumerate}

Now, Clause~(1) of Theorem~\ref{thmb} establishes that a strong form of \ref{rIII} is compatible with having some thin stationary set of $\alpha$'s for which $|C_\alpha|=|\alpha|$,
and Clause~(2) shows that this subtle variation is indeed sufficient.
In a similar way, Clause~(1) solves a question of Shalev raised in the opening paragraph of \cite[\S4.1]{MR4833803}.
We find it interesting that while walks on ordinals and the recursive method of \cite{paper22}
provide two unrelated techniques for canonically producing trees out of $C$-sequences, a discovery concerning one of the methods can lead to a corresponding discovery about the other one.

\subsection{Notation and conventions}\label{nandc}
By an \emph{inaccessible} we mean a regular uncountable limit cardinal.
For a cardinal $\kappa$, we denote by $H_\kappa$ the collection of all sets of hereditary cardinality less than $\kappa$ (see Section IV.6 and Definition~III.3.4 of \cite{MR597342}).

\subsection{Organization of this paper}
In Section~\ref{secthma}, we provide some preliminaries on the interval topology and we prove Theorem~\ref{thma}.
To make this section more accessible, we have ensured that it is self-contained,
focuses on $\aleph_1$, assumes no familiarity with complicated background on tree constructions nor requires deep background in topology.
The section is concluded with a short discussion on variations of Theorem~\ref{thma}, also noting that an
$\aleph_1$-tree $\mathbf T$ for which $(X_{\mathbf T})^2$ is not cmc cannot be obtained on the grounds of $\zfc$ alone.

In Section~\ref{section:special}, we introduce the proxy principle $\p_{<}(\kappa,2,{\sq},\kappa)$ and prove its consistency, as promised in Clause~(1) of Theorem~\ref{thmb}.

In Section~\ref{section:thmb2}, we give our application of the new proxy principle, as promised in Clause~(2) of Theorem~\ref{thmb}.

\section{Theorem~\ref{thma}}\label{secthma}
To make this section accessible to a wide audience, we avoid unnecessary abstractions throughout.
In particular, we write $\Lambda$ for the set of infinite countable limit ordinals, and
we agree here on the following concrete implementation of trees, as follows.\footnote{To clarify, if $T$ is a tree in the sense of Definition~\ref{defn-tree}, then $\mathbf T:=(T,{\s})$ is a tree in the abstract sense,
and the height of any node $x\in T$ is nothing but $\dom(x)$.}
\begin{defn}\label{defn-tree}
A \emph{tree} is a subset $T$ of ${}^{<\omega_1}w$ for some countable set $w$ that is downward closed, i.e.,
for every $t\in T$ and every $\alpha<\dom(t)$, $t\restriction \alpha$ is as well in $T$.
$\h(T)$ stands for the least $\alpha\le\omega_1$ for which $T_\alpha:=T\cap{}^\alpha w$ is empty.
\end{defn}

Every two comparable nodes $x\subsetneq y$ of a tree $T$ give rise to an interval as follows: $(x,y]:=\{ z\in T\mid x\subsetneq z\s y\}$.
The \emph{interval topology} on $T$ has, as its basic open sets, the intervals $(x,y]$ for all $x,y\in T$ with $x\subsetneq y$, as well as the singleton $\{\emptyset\}$.
We denote by $X_T$ the outcome topological space.
Note that for every $t\in T$, $t$ is an isolated point of $X_T$ iff $\dom(t)\notin\Lambda$.
Also note that for every nonisolated $y\in T$, we have $\bigcup\{x\in T\mid x\subsetneq y\}=y$, and hence $X_T$ is a Hausdorff space.

\begin{defn}
\begin{enumerate}
\item A tree $T$ is an \emph{$\aleph_1$-tree} iff $\h(T)=\omega_1$, and $T_\alpha$ is countable for every $\alpha<\omega_1$.
\item A subset $A$ of a tree $T$ is an \emph{antichain} iff for every pair $x\s y$ of nodes from $A$, we have $x=y$.
\item An $\aleph_1$-tree $T$ is \emph{almost-Souslin} iff
for every antichain $A\s T$, the set $\{ \dom(x)\mid x\in A\}$ is nonstationary in $\omega_1$.
\item A tree $T$ is \emph{$\mathbb R$-embeddable} iff there exists a map $c:T\rightarrow\mathbb R$ such that for every pair $x\subsetneq y$ of nodes of $T$, $c(x)<c(y)$.
\end{enumerate}
\end{defn}

In \cite[Theorem~4.4]{Sh:86}, Devlin and Shelah constructed from $\diamondsuit^*(\omega_1)$ an $\mathbb R$-embeddable almost-Souslin $\aleph_1$-tree $T$ for which $X_T$ is not normal.
In \cite[Theorem~6]{MR623439}, Hanazawa constructed from $\diamondsuit^*(\omega_1)$ an $\mathbb R$-embeddable almost-Souslin $\aleph_1$-tree $T$ for which $X_T$ is normal.

\begin{fact}[{\cite[Theorems 3.3 and 4.1]{Sh:86}}]\label{devlinshelah} Suppose that $T$ is an $\aleph_1$-tree.
\begin{enumerate}[label=\textup{(\arabic*)}]
\item $T$ is almost-Souslin iff $X_T$ is \emph{collectionwise Hausdorff},
that is, iff for every closed discrete $A\s X_T$,
there exists a pairwise disjoint system $\langle O_x\mid x\in A\rangle$
such that, for every $x\in A$, $O_x$ is an open neighborhood of $x$.
\item The following condition on $T$, called \emph{property $\gamma$}, which clearly implies that $T$ is almost-Souslin, also implies that $X_T$ is normal: for every antichain $A\s T$,
there exists a club $D\s\omega_1$ such that $\bigcup_{\alpha\in\omega_1\setminus D}T_\alpha$
contains a closed neighborhood of $A$.
\end{enumerate}
\end{fact}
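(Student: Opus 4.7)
My plan is to treat the two clauses of Fact~\ref{devlinshelah} in turn, leaning on Fodor's lemma and the countability of levels throughout.

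For Clause~(1), the direction $X_T$ cwH $\Rightarrow$ $T$ almost-Souslin proceeds by contradiction. Assume $A \s T$ is an antichain with $S := \{\dom(x) \mid x \in A\} \cap \Lambda$ stationary. Since $A$ is closed discrete in $X_T$, cwH yields pairwise disjoint open neighborhoods $(O_x)_{x \in A}$; for each $x$ with $\dom(x) \in S$, pick $y_x \subsetneq x$ with $(y_x, x] \s O_x$. Apply Fodor to the regressive map $\dom(x) \mapsto \dom(y_x)$ on $S$ to stabilize $\dom(y_x) = \delta$ on a stationary subset; then pigeonhole on the countable set $T_\delta$ to locate a fixed $y \in T_\delta$ with uncountably many $x$ satisfying $y_x = y$. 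The resulting intervals $(y, x]$ are pairwise disjoint, yet each contains a distinct immediate successor of $y$ within the countable set $T_{\delta+1}$, a contradiction.

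For the converse direction $T$ almost-Souslin $\Rightarrow$ $X_T$ cwH, let $D \s X_T$ be closed discrete. Successor-level nodes of $D$ are separated by their own singletons, so attention turns to $D^\Lambda := D \cap \bigcup_{\alpha \in \Lambda} T_\alpha$. For each $x \in D^\Lambda$, the set $\{z \in D^\Lambda \mid z \subseteq x\}$ is a chain well-ordered by $\s$ and hence admits a unique $\s$-minimum $a_x$; let $A := \{a_x \mid x \in D^\Lambda\}$. The combination of closedness and discreteness of $D$ in $X_T$ rules out any chain above an $a \in A$ having a supremum in $T$ (such a supremum would lie in $D$ and violate discreteness there), forcing each $\text{chain}(a) := \{x \in D^\Lambda \mid a_x = a\}$ to have order type at most $\omega$. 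This yields a decomposition $D^\Lambda = \bigsqcup_{n < \omega} D_n$, where $D_n$ is the $(n{+}1)$-th element of each $\text{chain}(a)$, and a short argument shows each $D_n$ is an antichain. Applying almost-Souslin to each $D_n$ and intersecting produces a club $C \s \omega_1$ disjoint from $\{\dom(x) \mid x \in D^\Lambda\}$. For each $\gamma \in C$ with successor $\gamma^+ := \min(C \setminus (\gamma+1))$, the set $D^\gamma := \{x \in D^\Lambda \mid \gamma < \dom(x) < \gamma^+\}$ is countable, so one enumerates it and recursively chooses $y_x$ at levels in $[\gamma, \dom(x))$ with $(y_x, x]$ pairwise disjoint within $D^\gamma$; disjointness across $\gamma \neq \gamma'$ in $C$ is automatic, since predecessors sit in different $C$-gaps.

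For Clause~(2), given disjoint closed $F_0, F_1 \s X_T$, extract antichains $A_i \s F_i$ of $\s$-minimal limit-level elements; disjointness of $F_0, F_1$ guarantees $A := A_0 \cup A_1$ is an antichain. Apply property~$\gamma$ to $A$ to obtain a club $E \s \omega_1$ and a closed neighborhood $N \supseteq A$ with $N \s \bigcup_{\alpha \notin E} T_\alpha$. The tree structure partitions $N = N_0 \sqcup N_1$ according to which $A_i$ an $N$-node extends; both $N_i$ are closed, and their interiors contain $A_i$. To promote this partial separation to genuine disjoint opens covering $F_0, F_1$, iterate over $\alpha \in E$, using countability of $T_\alpha$ to separate the countable disjoint sets $F_0 \cap T_\alpha$ and $F_1 \cap T_\alpha$ coherently with the base separation from $N_0, N_1$.

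The main obstacle I anticipate lies in the decomposition step of Clause~(1): verifying that each $\text{chain}(a)$ is of order type at most $\omega$ requires a tight interplay between closedness and discreteness of $D$, and similarly the verification that each $D_n$ is an antichain depends on arguing that distinct chains $\text{chain}(a), \text{chain}(a')$ cannot contribute comparable elements at the same index. Once this structural reduction is in hand, everything else --- the club argument via almost-Souslin and the recursive level-by-level separation --- is routine.
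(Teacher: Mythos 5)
First, a point of comparison: the paper does not prove this statement at all --- it is quoted as a known Fact from Devlin--Shelah \cite[Theorems 3.3 and 4.1]{Sh:86} --- so your proposal can only be judged on its own merits, not against an in-paper argument.

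For Clause~(1), your forward direction (collectionwise Hausdorff implies almost-Souslin) is correct: Fodor plus pigeonhole on the countable levels $T_\delta$ and $T_{\delta+1}$ is exactly the standard argument, and the disjointness of the intervals does force the restrictions to level $\delta+1$ to be distinct. In the converse direction your skeleton (decompose $D^\Lambda$ into countably many antichains, get one club from almost-Souslinity, then separate gap-by-gap using countability) is the right one, but the decomposition step as you state it does not parse: for a minimal element $a$, your set $\mathrm{chain}(a)=\{x\in D^\Lambda\mid a_x=a\}$ is the set of \emph{all} elements of $D^\Lambda$ extending $a$, which is in general a tree and not a chain (take two incomparable extensions of $a$ in $D^\Lambda$), so ``order type at most $\omega$'' and ``the $(n+1)$-th element of each $\mathrm{chain}(a)$'' are not defined. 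The correct statement, provable by exactly the closed-plus-discrete argument you gesture at (an infinite chain of $D$-predecessors of $x$ would converge to a node $x\restriction\delta$ of $T$, which closedness puts into $D$ and discreteness then forbids), is that every $x\in D^\Lambda$ has only \emph{finitely} many proper predecessors in $D^\Lambda$; setting $D_n:=\{x\in D^\Lambda\mid x\text{ has exactly }n\text{ such predecessors}\}$ gives antichains for the one-line reason that $x\subsetneq x'$ strictly increases the count. With that repair (and the small additional care that each interval $(y_x,x]$ must be chosen to avoid \emph{all} other points of $D$, not only previously enumerated ones, so that later-enumerated elements below $x$ and the isolated points of $D$ are not swallowed), your argument for Clause~(1) goes through.

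Clause~(2) has a genuine gap. The opening claim that disjointness of $F_0,F_1$ makes $A_0\cup A_1$ an antichain is false: if $x\in F_0$ and $y\in F_1$ are comparable limit-level nodes (e.g. $F_0=\{x\}$, $F_1=\{y\}$ with $x\subsetneq y$), then $A_0=\{x\}$ and $A_1=\{y\}$, and $A_0\cup A_1$ is a chain of length two; consequently the partition $N=N_0\sqcup N_1$ ``according to which $A_i$ a node extends'' is also ill-defined, since a node may extend elements of both, or of neither. More seriously, the final step --- ``iterate over $\alpha\in E$, using countability of $T_\alpha$ to separate $F_0\cap T_\alpha$ from $F_1\cap T_\alpha$ coherently with the base separation'' --- is precisely where the whole difficulty of normality for tree topologies lives, and it is not addressed: basic neighborhoods of nodes at level $\alpha$ reach down through all lower levels, so a union of level-by-level separations is neither automatically open-and-disjoint nor coherent across levels, and nothing in your sketch uses the closed neighborhood supplied by property $\gamma$ to control the points of $F_0,F_1$ lying outside $N$ or at the club levels where the cross-level interactions concentrate. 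As it stands, Clause~(2) is a restatement of the problem rather than a proof, so the proposal cannot be accepted without a genuinely new argument for that clause.
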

\begin{remark} The question of whether $X_T$ for a given $\aleph_1$-tree $T$ is normal goes back to Jones' work \cite{jones1966remarks} on \emph{Moore spaces} \cite{MR150722}.
The proof of \cite[Theorem~4.2]{Sh:86} shows that if $\diamondsuit(S)$ holds for every stationary $S\s\omega_1$,
then an $\aleph_1$-tree $T$ satisfies property $\gamma$ iff $X_T$ is normal.
\end{remark}
\begin{defn}[Two types of square]\label{square-def} Let $T$ be a tree.
\begin{itemize}
\item $(X_T)^2$ stands for the topological product space $X_T\times X_T$;
\item $T^2$ stands for the set $\{ (x_0,x_1)\in T\times T\mid \dom(x_0)=\dom(x_1)\}$,
which we typically equip with the ordering $\subseteq^2$ defined via $(x_0,x_1)\subseteq^2(y_0,y_1)$ iff $x_0\s y_0$ and $x_1\s y_1$.
\end{itemize}
\end{defn}
\begin{remark} The poset $(T^2,{\subseteq^2})$ is a tree in the abstract sense,
and the height of a node $(x_0,x_1)\in T^2$ is nothing but $\dom(x_0)$.
A subset $A$ of $T^2$ is a \emph{$\s^2$-antichain} iff for every pair $x\s^2 y$ of nodes from $A$, we have $x=y$.
\end{remark}

The next fact is standard. We include a proof for completeness.

\begin{prop}\label{prop26}
\begin{enumerate}[label=\textup{(\arabic*)}]
\item $T^2$ is a closed subspace of $(X_T)^2$;
\item Every $\s^2$-antichain in $T^2$ is a closed discrete subspace of $(X_T)^2$.
\end{enumerate}
\end{prop}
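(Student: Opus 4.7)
The plan is to handle Clause~(1) by showing the complement of $T^2$ in $(X_T)^2$ is open, and to base Clause~(2) on a single structural observation: for any $(x_0,x_1)\in T^2$ of height $\alpha$ and any $z_i\subsetneq x_i$, the set $((z_0,x_0]\times(z_1,x_1])\cap T^2$ equals the $\s^2$-chain $\{(x_0\restriction\gamma,x_1\restriction\gamma)\mid \max(\dom(z_0),\dom(z_1))<\gamma\le\alpha\}$, because membership forces $y_i\s x_i$ combined with $\dom(y_0)=\dom(y_1)$.

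For Clause~(1), I would take $(x_0,x_1)\in X_T\times X_T$ with $\dom(x_0)\ne\dom(x_1)$, assume $\alpha_0:=\dom(x_0)<\dom(x_1)=:\alpha_1$, and build basic open neighborhoods $U_i\ni x_i$ whose heights are separated at $\alpha_0$. Take $U_0$ to be $\{x_0\}$ if $\alpha_0\notin\Lambda$ (this is open, being either $\{\emptyset\}$ or, in case $\alpha_0=\beta+1$, the basic set $(x_0\restriction\beta,x_0]$) and $(\emptyset,x_0]$ otherwise, so that every element of $U_0$ has domain $\le\alpha_0$. Take $U_1$ to be the basic open set $(x_1\restriction\alpha_0,x_1]$, whose elements have domain strictly above $\alpha_0$. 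Then $U_0\times U_1$ is a basic open neighborhood of $(x_0,x_1)$ disjoint from $T^2$.

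For Clause~(2), discreteness of $A$ falls straight out of the observation. Fix $(x_0,x_1)\in A$ of height $\alpha$. If $\alpha\notin\Lambda$, both coordinates are isolated in $X_T$ and $\{(x_0,x_1)\}$ is already open. If $\alpha\in\Lambda$, the observation applied with $z_i=\emptyset$ shows that $V:=(\emptyset,x_0]\times(\emptyset,x_1]$ meets $T^2$ in a $\s^2$-chain passing through $(x_0,x_1)$, and a $\s^2$-antichain can meet such a chain in at most one point, giving $V\cap A=\{(x_0,x_1)\}$. For closedness, Clause~(1) reduces matters to showing $A$ is closed in $T^2$. Given $(p_0,p_1)\in T^2\setminus A$ with $\beta:=\dom(p_0)$, the non-limit case is trivial; when $\beta\in\Lambda$, the same observation gives at most one member of $A$ in $V:=(\emptyset,p_0]\times(\emptyset,p_1]$, say $(p_0\restriction\gamma_0,p_1\restriction\gamma_0)$, and since $(p_0,p_1)\notin A$ we must have $\gamma_0<\beta$, so shrinking to $(p_0\restriction\gamma_0,p_0]\times(p_1\restriction\gamma_0,p_1]$ excises that single intruder and yields a neighborhood of $(p_0,p_1)$ disjoint from $A$.

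I do not anticipate a genuine obstacle; the entire argument is careful bookkeeping of heights in the interval topology, with the lone conceptual ingredient being that basic product neighborhoods cut out chains in $T^2$ while antichains meet chains sparsely. The only thing worth double-checking along the way is that each interval $(x,y]$ used is a bona-fide basic open set, which amounts to verifying $x\subsetneq y$ in $T$; the edge case $x=\emptyset$ is always fine since $\emptyset$ belongs to any nonempty tree by downward closure.
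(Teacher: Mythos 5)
Your proof is correct and takes essentially the same route as the paper's: both clauses hinge on the observation that the pairs of $T^2$ lying in a basic product neighborhood and strictly below a fixed node of $T^2$ form a $\s^2$-chain (pairwise compatible), so a $\s^2$-antichain meets it in at most one point, after which one shrinks the intervals past that single offender; your splits into isolated versus limit height and into discreteness versus closedness are just a finer decomposition of the paper's single uniform argument (which uses $[\emptyset,x_i]$ to avoid the case distinction). The only implicit point is the trivial subcase where no member of $A$ lies in $V$, in which $V$ itself already works.
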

\begin{proof} (1) Given $(x_0,x_1)\in(X_T)^2\setminus T^2$,
we shall find an open neighborhood of $(x_0,x_1)$ disjoint from $T^2$.
Fix $i<2$ such that $\dom(x_i)<\dom(x_{1-i})$. Set $I_i:=[\emptyset,x_i]$,\footnote{For $x\in T$, write $[\emptyset,x] := \{\emptyset\}\cup(\emptyset,x]$, which is an open neighborhood of $x$ in $X_T$.}
and $I_{1-i}:=(x_{1-i}\restriction \dom(x_i),x_{1-i}]$. Then $U:=I_0\times I_1$ is an open neighborhood as sought.

(2) Given a $\s^2$-antichain $A\s T^2$, to show it is closed discrete, let $(x_0,x_1)\in(X_T)^2$ be given, and we shall find an open neighborhood $U$ of $(x_0,x_1)$ such that $A\cap U\s\{(x_0,x_1)\}$.
By Clause~(1), we may assume that $(x_0,x_1)\in T^2$.
Thus, consider the set $A':=\{ (y_0,y_1)\in A\mid y_0\subsetneq x_0\ \&\allowbreak\ y_1\subsetneq x_1\}$.
Every two pairs in $A'$ are compatible elements of $(T^2,{\s^2})$. But $A'$ is a subset of a $\s^2$-antichain, and hence there are only two possibilities:

$\br$ $A'$ is empty. In this case, $U:=\prod_{i<2}[\emptyset,x_i]$ is an open neighborhood as sought.

$\br$ $A'$ is a singleton, say, $A'=\{(y_0,y_1)\}$. In this case, $U:=\prod_{i<2}(y_i,x_i]$ is an open neighborhood as sought.
\end{proof}

\subsection{Trees embeddable to the reals}
For the scope of Section~\ref{secthma}, we define a map $c:{}^{<\omega_1}\mathbb Q\rightarrow\mathbb{R}\cup\{\infty\}$ via
$$c(x):=
\begin{cases}
0,&\text{if }x=\emptyset;\\
\sup(\im(x)),&\text{otherwise}.
\end{cases}$$

\begin{defn} $\T$ denotes the collection of all $x\in {}^{<\omega_1}\mathbb Q$
for which $\langle c(x\restriction \beta)\mid \beta\le\dom(x)\rangle$ is a strictly increasing sequence of real numbers.
\end{defn}

The following facts are readily checked:
\begin{prop}\label{359}
\begin{enumerate}
\item $\T$ is a tree (in the sense of Definition~\ref{defn-tree}).
\item For every $x\in\T$ with $\dom(x)=\alpha+1$ a successor, $c(x)=x(\alpha)$.
\item $c\restriction\T$ is a strictly increasing map from $(\T,{\s})$ to $(\mathbb R,{\le})$.
\item For every strictly $\subseteq$-increasing sequence $\langle t_n \mid n<\omega \rangle$ of elements of $\T$,
if $\sup\{c(t_n) \mid n<\omega \}\neq\infty$,
then the unique limit of the sequence, $t:= \bigcup\{t_n \mid n<\omega\}$, is also in $\T$,
and $c(t) = \sup\{c(t_n) \mid n<\omega \}$. \qed
\end{enumerate}
\end{prop}
\begin{cor} Every tree $T\s\T$ is $\mathbb R$-embeddable.\footnote{The poset $(\T,\s)$ is essentially the same as the poset $(\sigma\mathbb Q,{\leqdot})$
from \cite[p.~245]{MR776625}, in the sense of possessing a universal feature for $\mathbb R$-embeddable trees as in \cite[Proposition~3]{MR2783794}.}
\qed
\end{cor}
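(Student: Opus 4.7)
The corollary is essentially immediate from the second bulleted observation preceding it, and the plan is simply to unpack that observation. Given any tree $T\s\T$, the candidate embedding is $c\restriction T$, which maps into $[0,1]\s\mathbb R$. To verify the required property, take any pair $x\subsetneq y$ of nodes in $T$; setting $\alpha:=\dom(x)<\dom(y)$, we have $x=y\restriction\alpha$, so the membership $y\in\T$ tells us that the sequence $\langle c(y\restriction\beta)\mid\beta\le\dom(y)\rangle$ is strictly increasing, and in particular
$$c(x)=c(y\restriction\alpha)<c(y\restriction\dom(y))=c(y),$$
as needed.

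So the only real content is the two bulleted facts, which are asserted to be readily checked. The first (that $c(x)=x(\alpha)$ whenever $\dom(x)=\alpha+1$) follows since then $\im(x)$ is finite with maximum $x(\alpha)$ and the strict increase along $x\in\T$ forces the supremum of $\im(x)$ to be attained at the top coordinate. The second (strict $\s$-monotonicity of $c\restriction\T$) is exactly the computation above. There is no obstacle worth flagging: the definition of $\T$ was tailored precisely so that $c$ serves as a tautological $\mathbb R$-embedding of any subtree, and the corollary records this fact in the form it will later be invoked.
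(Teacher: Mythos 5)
Your proof is correct and matches the paper's (implicit) argument: the corollary is immediate from the strict monotonicity of $c\restriction\T$, which you verify exactly as intended via $c(x)=c(y\restriction\dom(x))<c(y)$ for $x\subsetneq y$ in $\T$. One minor slip in a side remark you don't actually need: when $\dom(x)=\alpha+1$ with $\alpha$ infinite, $\im(x)$ is not finite; the identity $c(x)=x(\alpha)$ instead follows from $\sup(\im(x))=\max\{c(x\restriction\alpha),x(\alpha)\}$ together with $c(x\restriction\alpha)<c(x)$ --- but this fact plays no role in the corollary itself.
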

\begin{defn}
For all $x\in\T$ and $q\in{\mathbb Q}$, denote
$$U(x,q):=\{y\in \T\mid x\subsetneq y\ \&\ c(y)<c(x)+q\}.$$
\end{defn}

Evidently, for every tree $T\s\T$, for all $x\in T$ and $q\in{\mathbb Q}$, the set $U(x,q)\cap T$ is open in $X_T$.

\subsection{Elevators and friends} We now introduce a few key concepts that will aid in our upcoming construction.

\begin{defn}[Tree-coarsening]\label{coarsening} For a tree $T$, we say that a partial ordering $\LE$ on $T^2$ is a \emph{tree-coarsening} of $\s^2$ iff for all $(x_0,x_1),(y_0,y_1)\in T^2$:
\begin{itemize}
\item if $(x_0,x_1)\LE(y_0,y_1)$, then $(x_0,x_1)\s^2(y_0,y_1)$;
\item if $(x_0,x_1)\LE(y_0,y_1)$, then $(x_1,x_0)\LE(y_1,y_0)$;
\item if $(x_0,x_1)\LE(y_0,y_1)$, then for every $\beta$ with $\dom(x_0)<\beta<\dom(y_0)$,
$(x_0,x_1)\LE(y_0\restriction\beta,y_1\restriction\beta)\LE(y_0,y_1)$.
\end{itemize}
\end{defn}

\begin{defn}[$q$-elevator]\label{n-elevator} For a tree $T\s\T$, a tree-coarsening ${\LE}$ of $\subseteq^2$, ordinals $\beta<\alpha<\h(T)$, and $q\in{\mathbb Q}$,
a function $e$ from a subset of $T_\beta$ to $T_\alpha$ is said to be a \emph{$q$-elevator} (with respect to $\LE$) iff:
\begin{itemize}
\item for every $x\in\dom(e)$, $e(x)\in U(x,q)$, and
\item for every $(x_0,x_1)\in(\dom(e))^2$, $(x_0,x_1)\LE (e(x_0),e(x_1))$.
\end{itemize}
\end{defn}

\begin{remark}\label{elevator-composition} The composition of a $p$-elevator with a $q$-elevator is an $r$-elevator for every $r\geq p+q$.
\end{remark}
\begin{notation} $\bar{\mathbb Q}$ stands for $\mathbb Q\cap(0,1)$.
\end{notation}

\begin{defn}[Coordination]\label{Omega_coordinated}
For a tree $T\s\T$, a tree-coarsening ${\LE}$ of $\subseteq^2$,
and ordinals $\beta<\alpha<\h(T)$, we say that $T_\beta$ and $T_\alpha$ are \emph{coordinated} (with respect to $\LE$) iff for every $q\in\bar{\mathbb Q}$, the following three hold:
\begin{enumerate}[start=0,label=\textup{(\arabic*)}]
\item\label{cord0} For every finite $W\s T_\alpha$, there exists a $q$-elevator $e:T_\beta\rightarrow T_\alpha$ such that $\im(e)\cap W=\emptyset$.
\item\label{cord1} For every $x\in T_\beta$, every $y\in U(x,q)\cap T_\alpha$ and every finite set $W\s T_\alpha\setminus \{y\}$,
there exists a $q$-elevator $e:T_\beta\rightarrow T_\alpha$ such that:
\begin{itemize}
\item $e(x)=y$, and
\item $\im(e)\cap W=\emptyset$.
\end{itemize}
\item\label{cord2} For every pair $(x_0,x_1)\in T_\beta\times T_\beta$ with $x_0\neq x_1$, for every pair $(y_0,y_1)\in(U(x_0,q)\cap T_\alpha)\times(U(x_1,q)\cap T_\alpha)$ such that $(x_0,x_1)\LE (y_0,y_1)$,
and every finite set $W\s T_\alpha\setminus \{y_0,y_1\}$, there exists a $q$-elevator $e:T_\beta\rightarrow T_\alpha$ such that:
\begin{itemize}
\item $e(x_0)=y_0$,
\item $e(x_1)=y_1$, and
\item $\im(e)\cap W=\emptyset$.
\end{itemize}
\end{enumerate}
\end{defn}
\begin{remark}
It follows from Clause~\ref{cord0} above that for all $x\in T_\beta$ and $q\in\mathbb Q$, there is a $y$ in $U(x,q)\cap T_\alpha$.
\end{remark}

\begin{lemma}\label{coordination-transitive} Suppose:
\begin{itemize}
\item $T\s\T$ is a tree;
\item $\LE$ is a tree-coarsening of $\s^{2}$;
\item $\gamma<\beta<\alpha<\h(T)$ are ordinals;
\item $T_\gamma$ and $T_\beta$ are coordinated, and $T_\beta$ and $T_\alpha$ are coordinated;
\item for all $(x_0,x_1)\in T_\beta\times T_\beta$ and $(y_0,y_1)\in T_\alpha\times T_\alpha$, $(x_0,x_1)\LE(y_0,y_1)$ iff $(x_0,x_1)\s^2(y_0,y_1)$.
\end{itemize}

Then $T_\gamma$ and $T_\alpha$ are coordinated.
\end{lemma}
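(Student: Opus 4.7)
The plan is to verify each of the three clauses of Definition~\ref{Omega_coordinated} for the pair $(T_\gamma, T_\alpha)$ by composing elevators. Given a target $q \in \bar{\mathbb Q}$, the idea is to pick rationals $p, q' \in \bar{\mathbb Q}$ with $p + q' \le q$, extract a $p$-elevator $e_1 \colon T_\gamma \to T_\beta$ from the coordination of $T_\gamma$ and $T_\beta$, extract a $q'$-elevator $e_2 \colon T_\beta \to T_\alpha$ from the coordination of $T_\beta$ and $T_\alpha$, and set $e := e_2 \circ e_1$. By Remark~\ref{elevator-composition}, $e$ is a $(p+q')$-elevator, which is automatically a $q$-elevator since $U(z, p+q') \s U(z, q)$ for every $z \in T_\gamma$ and $\LE$ is transitive.

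For Clause~(1), given finite $W \s T_\alpha$, any split $p + q' = q$ with $p, q' \in \bar{\mathbb Q}$ works: invoke Clause~(1) of coordination of $T_\beta$ and $T_\alpha$ (with the set $W$) together with Clause~(1) of coordination of $T_\gamma$ and $T_\beta$ (with empty forbidden set). For Clause~(2), given $x \in T_\gamma$, $y \in U(x,q) \cap T_\alpha$, and finite $W \s T_\alpha \setminus \{y\}$, set $y' := y \restriction \beta$, so $x \subsetneq y' \subsetneq y$. Since $(c(y') - c(x)) + (c(y) - c(y')) = c(y) - c(x) < q$, one can choose $p, q' \in \bar{\mathbb Q}$ with $c(y') - c(x) < p$, $c(y) - c(y') < q'$, and $p + q' \le q$, placing $y' \in U(x, p) \cap T_\beta$ and $y \in U(y', q') \cap T_\alpha$. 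Apply Clause~(2) of coordination of $T_\beta$ and $T_\alpha$ with input $(y', y, W)$ to obtain $e_2$, and Clause~(2) of coordination of $T_\gamma$ and $T_\beta$ with input $(x, y', \emptyset)$ to obtain $e_1$; the composition satisfies $e(x) = y$ and $\im(e) \s \im(e_2)$ is disjoint from $W$.

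For Clause~(3), given distinct $x_0, x_1 \in T_\gamma$ and pairs $y_i \in U(x_i, q) \cap T_\alpha$ with $(x_0, x_1) \LE (y_0, y_1)$, together with finite $W \s T_\alpha \setminus \{y_0, y_1\}$, set $y_i' := y_i \restriction \beta$. By the third bullet of Definition~\ref{coarsening} applied to $(x_0, x_1) \LE (y_0, y_1)$, we obtain $(x_0, x_1) \LE (y_0', y_1') \LE (y_0, y_1)$. A preliminary observation is that $y_0' \ne y_1'$: otherwise, the first bullet of Definition~\ref{coarsening} applied to $(x_0, x_1) \LE (y_0', y_0')$ would give $x_0 \s y_0'$ and $x_1 \s y_0'$, forcing $x_0 = y_0' \restriction \gamma = x_1$. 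Next, choose $p, q' \in \bar{\mathbb Q}$ with $p + q' \le q$ and with enough slack so that $y_i' \in U(x_i, p)$ and $y_i \in U(y_i', q')$ hold for both $i < 2$ (possible since $c(y_i) - c(x_i) < q$ for each $i$). Apply Clause~(3) of coordination of $T_\beta$ and $T_\alpha$ to the pair $(y_0', y_1') \LE (y_0, y_1)$ to extract $e_2$ with $e_2(y_i') = y_i$ and $\im(e_2) \cap W = \emptyset$, and Clause~(3) of coordination of $T_\gamma$ and $T_\beta$ to the pair $(x_0, x_1) \LE (y_0', y_1')$ to extract $e_1$ with $e_1(x_i) = y_i'$. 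The main obstacle is precisely this last verification: both coordinates must be pinned down simultaneously, and the crux is that the restriction pair $(y_0', y_1')$ at level $\beta$ is $\LE$-sandwiched between $(x_0, x_1)$ and $(y_0, y_1)$ and consists of two distinct nodes, so that Clause~(3) of each ambient coordination applies.
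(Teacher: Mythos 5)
Your treatments of Clauses (1) and (2) are correct and essentially identical to the paper's. The problem is in Clause (3), at the step ``choose $p,q'\in\bar{\mathbb Q}$ with $p+q'\le q$ and with enough slack so that $y_i'\in U(x_i,p)$ and $y_i\in U(y_i',q')$ hold for both $i<2$.'' The justification you give (that $c(y_i)-c(x_i)<q$ for each $i$) only provides slack \emph{separately} for each coordinate; it does not yield a single split of the budget $q$ that works for both, because the two coordinates may spend their budgets on opposite sides of level $\beta$. Concretely, with $q=\frac12$ nothing in the hypotheses rules out $c(y_0')-c(x_0)=\frac25$, $c(y_0)-c(y_0')=\frac1{20}$, while $c(y_1')-c(x_1)=\frac1{20}$, $c(y_1)-c(y_1')=\frac25$: then any admissible $p$ and $q'$ must each exceed $\frac25$, so $p+q'>\frac45>q$, and your composition $e_2\circ e_1$ cannot be a $q$-elevator on all of $T_\gamma$. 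So the claimed simultaneous application of Clause (3) at both stages breaks down; this quantitative obstruction, not the distinctness of $y_0',y_1'$ (which you verify correctly), is the real difficulty.

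The paper circumvents it differently: it pins only at the lower stage, fixing a single $q_0\in\mathbb Q\cap(0,q)$ large enough that $y_i'\in U(x_i,q_0)$ for both $i$ (possible since each $c(y_i')-c(x_i)<c(y_i)-c(x_i)<q$) and taking a $q_0$-elevator $e_0:T_\gamma\to T_\beta$ with $e_0(x_i)=y_i'$; at the upper stage it only invokes Clause (1) to get a $(q-q_0)$-elevator $e_1:T_\beta\to T_\alpha\setminus W$ with no pinning, and then \emph{overrides} the composed map at $x_0,x_1$, sending them directly to $y_0,y_1$. The budget at those two points is covered by the hypothesis $y_i\in U(x_i,q)$ itself, and the verification that the patched map still respects $\LE$ uses the last bullet of the lemma's hypotheses, namely that $\LE$ coincides with $\s^2$ between levels $\beta$ and $\alpha$. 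Note that your argument never uses that hypothesis at all, which is a telltale sign that the intended difficulty has been bypassed rather than solved; you would need to incorporate the paper's patching idea (or something equivalent) to close the gap.
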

\begin{proof} Let $q\in\bar{\mathbb Q}$.
We go over the clauses of Definition~\ref{Omega_coordinated}, keeping in mind Remark~\ref{elevator-composition}:
\begin{enumerate}[start=0]
\item Consider a given finite $W\s T_\alpha$.
As $T_\gamma$ and $T_\beta$ are coordinated,
we may fix a $\frac{q}{2}$-elevator $e_0:T_\gamma\rightarrow T_\beta$.
As $T_\beta$ and $T_\alpha$ are coordinated,
we may fix a $\frac{q}{2}$-elevator $e_1:T_\beta\rightarrow T_\alpha\setminus W$.
Then $e:=e_1\circ e_0$ is a $q$-elevator from $T_\gamma$ to $T_\alpha$ satisfying that $\im(e)\cap W=\emptyset$.

\item Let $x\in T_\gamma$, $z\in U(x,q)\cap T_\alpha$ and a finite set $W\s T_\alpha\setminus \{z\}$
be given; we need to find a $q$-elevator $e:T_\gamma\rightarrow T_\alpha\setminus W$ such that $e(x)=z$.

As $c(z)-c(x)<q$, we may find some $\varepsilon\in\bar{\mathbb Q}$ such that
\begin{enumerate}[label=(\roman*)]
\item\label{457} $c(z)-c(x)+2\varepsilon<q$.
\end{enumerate}

Set $y:=z\restriction \beta$, and then pick $q_0,q_1\in\bar{\mathbb Q}$ such that
\begin{enumerate}[label=(\roman*),resume]
\item $c(y)-c(x)<q_0<c(y)-c(x)+\varepsilon$, and
\item\label{465} $c(z)-c(y)<q_1<c(z)-c(y)+\varepsilon$.
\end{enumerate}

As $y\in U(x,q_0)\cap T_\beta$, we may fix a $q_0$-elevator $e_0:T_\gamma\rightarrow T_\beta$ such that $e_0(x)=y$.
As $z\in U(y,q_1)\cap T_\alpha$ and $W\s T_\alpha\setminus \{z\}$, we may fix a $q_1$-elevator $e_1:T_\beta\rightarrow T_\alpha\setminus W$ such that $e_1(y)=z$.
By \ref{457}--\ref{465}, $q_0+q_1<q$, and hence $e:=e_1\circ e_0$ is altogether a $q$-elevator as sought.

\item Let $\{x_0,x_1\}\in [T_\gamma]^2$, $(z_0,z_1)\in (U(x_0,q)\cap T_\alpha)\times(U(x_1,q)\cap T_\alpha)$ such that $(x_0,x_1)\LE (z_0,z_1)$,
and a finite set $W\s T_\alpha\setminus \{z_0,z_1\}$ be given;
we need to find a $q$-elevator $e:T_\gamma\rightarrow T_\alpha\setminus W$ such that
$e(x_0)=z_0$ and $e(x_1)=z_1$.

Set $y_0:=z_0\restriction \beta$ and $y_1:=z_1\restriction \beta$.
Fix a large enough $q_0\in \mathbb Q\cap (0,q)$ such that
$(y_0,y_1)\in (U(x_0,q_0)\cap T_\beta)\times(U(x_1,q_0)\cap T_\beta)$.
By Definition~\ref{coarsening}, $(x_0,x_1)\LE(y_0,y_1)$.
Thus, as $T_\gamma$ and $T_\beta$ are coordinated, we may fix a $q_0$-elevator $e_0:T_\gamma\rightarrow T_\beta$ such that $e_0(x_j)=y_j$ for every $j<2$.
Set $q_1:=q-q_0$.
As $T_\beta$ and $T_\alpha$ are coordinated, we may fix a $q_1$-elevator $e_1:T_\beta\rightarrow T_\alpha\setminus W$.
It is clear that $e_1\circ e_0$ is a $q$-elevator whose image is disjoint from $W$,
but we did not secure that $x_j$ goes to $z_j$ for every $j<2$.
To this end, using the fact that $\{z_0,z_1\}\cap W=\emptyset$, we define a map $e:T_\gamma\rightarrow T_\alpha\setminus W$ via
$$e(t):=
\begin{cases}
z_0,&\text{if }t=x_0;\\
z_1,&\text{if }t=x_1;\\
e_1(e_0(t)),&\text{otherwise}.
\end{cases}$$

It is evident that $e(t)\in U(t,q)$ for every $t\in T_\gamma$.
To see that the second bullet point of Definition~\ref{n-elevator} holds as well, let $(t_0,t_1)\in T_\gamma\times T_\gamma$ be given.
A moment's reflection makes it clear that for every $t\in T_\gamma$, $e(t)\restriction\beta=e_0(t)$. Therefore, $(t_0,t_1)\LE(e(t_0)\restriction\beta,e(t_1)\restriction\beta)$.
In addition, $(e(t_0)\restriction\beta,e(t_1)\restriction\beta)\s^2(e(t_0),e(t_1))$ holds trivially.
Then, by the hypotheses,
furthermore, $(e(t_0)\restriction\beta,e(t_1)\restriction\beta)\LE(e(t_0),e(t_1))$.
Altogether, $(t_0,t_1)\LE(e(t_0),e(t_1))$. \qedhere
\end{enumerate}
\end{proof}

\subsection{The construction} Our upcoming applications of the principle $\diamondsuit^*(\omega_1)$ and its consequence $\diamondsuit(\omega_1)$
are encapsulated by the following two easy facts. The first derives a particular ladder system,
and the second is a more versatile formulation
of $\diamondsuit(\omega_1)$ motivated by the fact that $\T$ is a subset of $H_{\omega_1}$.
Here, instead of predicting initial segments of given subsets of $\omega_1$,
we predict the extent seen by a countable elementary submodel of given subsets of $H_{\omega_1}$.

\begin{fact}[special case of {\cite[Theorem~4.35]{paper23}}]\label{proxystar}
$\diamondsuit^*(\omega_1)$ implies that there is a sequence $\vec C=\langle C_\alpha\mid\alpha<\omega_1\rangle$ satisfying the following two:
\begin{itemize}
\item for every $\alpha\in\Lambda$, $C_\alpha$ is a cofinal subset of $\alpha$ of order-type $\omega$;
\item for every uncountable $B\s\omega_1$, there are club many $\alpha<\omega_1$ such that $\sup(C_\alpha\cap B)=\alpha$.
\end{itemize}
\end{fact}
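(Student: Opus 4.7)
The plan is to begin with a $\diamondsuit^*$-sequence $\vec{\mathcal{A}} = \langle \mathcal{A}_\alpha \mid \alpha < \omega_1 \rangle$, where each $\mathcal{A}_\alpha$ is a countable family of subsets of $\alpha$ such that for every $A \s \omega_1$, the set $\{\alpha < \omega_1 \mid A \cap \alpha \in \mathcal{A}_\alpha\}$ contains a club. From $\vec{\mathcal{A}}$ I will cook up the desired ladder system $\vec{C}$, using a standard pairing trick to ensure that each member of $\mathcal{A}_\alpha$ gets infinitely many chances to contribute to $C_\alpha$.

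For each $\alpha \in \Lambda$, fix an increasing sequence $\langle \alpha_n \mid n < \omega \rangle$ cofinal in $\alpha$, an enumeration $\langle A^\alpha_n \mid n < \omega \rangle$ of $\mathcal{A}_\alpha$, and a surjection $\pi_\alpha \colon \omega \to \omega$ whose preimage $\pi_\alpha^{-1}\{m\}$ is infinite for every $m < \omega$. Then recursively define $\langle c^\alpha_n \mid n < \omega \rangle$ as follows: at stage $n$, set $\gamma_n := \max(\{\alpha_n\} \cup \{c^\alpha_k + 1 \mid k < n\})$ and let $c^\alpha_n$ be the least element of $A^\alpha_{\pi_\alpha(n)} \setminus \gamma_n$ if this set is nonempty, and $c^\alpha_n := \gamma_n$ otherwise. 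Finally put $C_\alpha := \{c^\alpha_n \mid n < \omega\}$. By construction the $c^\alpha_n$ are strictly increasing and unbounded in $\alpha$, so $C_\alpha$ is a cofinal subset of $\alpha$ of order-type $\omega$. For $\alpha < \omega_1$ not in $\Lambda$, let $C_\alpha$ be arbitrary.

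Given an uncountable $B \s \omega_1$, I would verify the prediction clause by considering three clubs: the club $D$ guaranteed by $\diamondsuit^*$ so that $B \cap \alpha \in \mathcal{A}_\alpha$ for every $\alpha \in D$; the club $\Lambda$ of countable limit ordinals; and the club of limit points of $B$. Let $E$ be their intersection. Fix $\alpha \in E$ and choose $m < \omega$ with $B \cap \alpha = A^\alpha_m$. Since $\alpha$ is a limit point of $B$, $A^\alpha_m$ is unbounded in $\alpha$, so for every $n$ with $\pi_\alpha(n) = m$ the set $A^\alpha_m \setminus \gamma_n$ is nonempty, whence $c^\alpha_n \in A^\alpha_m \s B$. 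As $\pi_\alpha^{-1}\{m\}$ is infinite, this forces $\sup(C_\alpha \cap B) = \alpha$, as required.

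The only real obstacle is the bookkeeping needed to simultaneously guarantee that $C_\alpha$ has order-type exactly $\omega$, is cofinal in $\alpha$, and meets every element of $\mathcal{A}_\alpha$ that is unbounded in $\alpha$ on a cofinal subset of $\alpha$. Inserting the surjection $\pi_\alpha$ that revisits each index $m$ infinitely often is precisely what reconciles these three demands, and once it is in place the remainder of the argument is a direct unpacking of the definition of $\diamondsuit^*$.
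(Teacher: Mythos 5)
Your construction is correct, and it necessarily takes a different route from the paper, since the paper does not prove this Fact at all: it is quoted as a special case of a general result on proxy principles (\cite[Theorem~4.35]{paper23}), under which $\diamondsuit^*(\omega_1)$ yields $\p^*(E^{\omega_1}_\omega,\omega)$, the stated ladder-system property being the instance $\p^-_\omega(\omega_1,2,{\sq},1,\ns^+_{\omega_1},2,1)$ mentioned in the paper's closing comments to Section~\ref{secthma}. You instead unwind $\diamondsuit^*(\omega_1)$ directly: for each $\alpha\in\Lambda$ you fix a cofinal $\omega$-sequence $\langle\alpha_n\mid n<\omega\rangle$, an enumeration of the guessed family $\mathcal A_\alpha$, and a surjection $\pi_\alpha$ with infinite fibers, and you interleave so that each member of $\mathcal A_\alpha$ that is unbounded in $\alpha$ is met cofinally while $C_\alpha$ stays strictly increasing of order-type $\omega$ and unbounded (since $c^\alpha_n\ge\alpha_n$ and $\gamma_n<\alpha$ because $\alpha$ is a limit). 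The verification is sound: for uncountable $B$, intersecting the $\diamondsuit^*$-club for $B$, the club $\Lambda$, and the club of limit points of $B$ gives a club of $\alpha$'s on which $B\cap\alpha\in\mathcal A_\alpha$ is unbounded in $\alpha$, and then the infinitely many stages $n$ with $\pi_\alpha(n)=m$ put elements of $B$ above $\alpha_n$ into $C_\alpha$, so $\sup(C_\alpha\cap B)=\alpha$; only trivia remain (pad the enumeration if $\mathcal A_\alpha$ is finite or empty, and choose $C_\alpha$ arbitrarily of order-type $\omega$ off $\Lambda$). What the two routes buy: yours is elementary and self-contained, which fits the stated goal of keeping Section~\ref{secthma} accessible; the paper's citation instead situates the Fact inside the proxy-principle framework, which is exactly what later allows $\diamondsuit^*(\omega_1)$ to be weakened (e.g., to hypotheses introduced by adding a Cohen real) in Theorem~\ref{thmA-from-proxy} and Corollary~\ref{cor21}.
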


\begin{fact}[special case of {\cite[Lemma~2.2]{paper22}}]\label{diamond_hw1}
$\diamondsuit(\omega_1)$ is equivalent to the existence of a partition $\langle R_i \mid i < \omega_1 \rangle$ of $\omega_1$
and a sequence $\langle \Omega_\beta \mid \beta < \omega_1 \rangle$ of countable sets
such that for all $p\in H_{\omega_2}$, $i<\omega_1$, and $\Omega \subseteq H_{\omega_1}$,
there exists a countable elementary submodel $\mathcal M\prec H_{\omega_2}$ such that:
\begin{itemize}
\item $p\in \mathcal M$;
\item $\beta := \mathcal M\cap\omega_1$ is an ordinal in $R_i$;
\item $\mathcal M\cap \Omega=\Omega_\beta$.
\end{itemize}
\end{fact}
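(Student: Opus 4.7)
The plan is to prove the equivalence in both directions, with the nontrivial direction being the construction of the partition and the sequence from $\diamondsuit(\omega_1)$. The bridge between predictions over $\omega_1$ and predictions over $H_{\omega_1}$ is a bijection $\pi:\omega_1\to H_{\omega_1}$, which exists since $\diamondsuit(\omega_1)$ implies $\ch$.

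For the easy direction, I would assume the partition $\langle R_i\mid i<\omega_1\rangle$ and the sequence $\langle \Omega_\beta\mid\beta<\omega_1\rangle$ are given, and define a classical $\diamondsuit$-sequence by $A_\beta:=\Omega_\beta\cap\beta$. Given any $A\s\omega_1$ and any club $D\s\omega_1$, I would invoke the hypothesis with $p:=\{A,D\}$, $\Omega:=A$, and an arbitrary $i<\omega_1$. Then $\mathcal M\ni A,D$, the ordinal $\beta:=\mathcal M\cap\omega_1$ is a limit lying in $D$ by the standard reflection argument, and $\mathcal M\cap A=\Omega_\beta$. Since $A\s\omega_1$ is an element of $\mathcal M$ and $\mathcal M\cap\omega_1=\beta$, this gives $A\cap\beta=\Omega_\beta=A_\beta$, as desired.

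For the hard direction, I would fix a $\diamondsuit(\omega_1)$-sequence and a bijection $\pi:\omega_1\to H_{\omega_1}$. Using the standard doubling of $\diamondsuit$---via a bijection $\omega_1\leftrightarrow\omega_1\times\omega_1$ compatible with initial segments on a club---I would obtain sequences $\langle i_\beta\mid\beta<\omega_1\rangle$ with $i_\beta<\omega_1$ and $\langle B_\beta\mid\beta<\omega_1\rangle$ with $B_\beta\s\beta$ such that for every $i^*<\omega_1$ and every $B^*\s\omega_1$, the set $\{\beta<\omega_1\mid i_\beta=i^*\ \&\ B_\beta=B^*\cap\beta\}$ is stationary. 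Then I would set $R_i:=\{\beta<\omega_1\mid i_\beta=i\}$, which forms a partition of $\omega_1$, and $\Omega_\beta:=\pi[B_\beta]$, a countable subset of $H_{\omega_1}$.

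To verify the conclusion, given $p\in H_{\omega_2}$, $i<\omega_1$, and $\Omega\s H_{\omega_1}$, I would put $B^*:=\pi^{-1}[\Omega]\s\omega_1$ and intersect the stationary set $\{\beta<\omega_1\mid i_\beta=i\ \&\ B_\beta=B^*\cap\beta\}$ with the (standard) club of $\beta<\omega_1$ for which some countable elementary submodel $\mathcal M\prec H_{\omega_2}$ with $\{p,\pi\}\s\mathcal M$ satisfies $\mathcal M\cap\omega_1=\beta$. For any such $\beta$ and $\mathcal M$, the inclusion $\pi\in\mathcal M$ forces $\mathcal M\cap H_{\omega_1}=\pi[\beta]$, whence
$$\mathcal M\cap\Omega=\pi[\beta]\cap\Omega=\pi[\beta\cap\pi^{-1}[\Omega]]=\pi[B^*\cap\beta]=\pi[B_\beta]=\Omega_\beta,$$
as required. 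The conceptual crux is precisely this identification, via the fixed bijection $\pi$, of predictions over $H_{\omega_1}$ with predictions over $\omega_1$; once that observation is in hand, the doubling of the $\diamondsuit$-sequence to simultaneously predict an index and a set is entirely routine.
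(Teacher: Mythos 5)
Your proof is correct, and it follows essentially the same route as the argument behind the cited result (the paper itself gives no proof, quoting this as a special case of \cite[Lemma~2.2]{paper22}): code $H_{\omega_1}$ by $\omega_1$ via a bijection available under $\ch$, double the $\diamondsuit$-sequence to guess an index together with a set, and transfer the guess to traces $\mathcal M\cap\omega_1$ of countable elementary submodels containing the bijection; the converse by setting $A_\beta:=\Omega_\beta\cap\beta$ is likewise the standard one. The only cosmetic point is that the set of $\beta$ of the form $\mathcal M\cap\omega_1$ with $p,\pi\in\mathcal M\prec H_{\omega_2}$ should be said to \emph{contain} a club rather than to be one, which does not affect the argument.
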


We now arrive at the main result of this section, namely, Theorem~\ref{thma}.
\begin{thm}\label{thm211}
Suppose that $\diamondsuit^*(\omega_1)$ holds. Then there is
an $\mathbb R$-embeddable almost-Souslin $\aleph_1$-tree $T$ such that $X_T$ is perfectly normal, but $(X_T)^2$ is not cmc.
\end{thm}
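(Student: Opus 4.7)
The plan is to construct by recursion along $\omega_1$ a tree $T\s\T$ (hence automatically $\mathbb R$-embeddable) together with a tree-coarsening $\LE$ of $\s^2$ on $T^2$, preserving the invariants that $T$ is a normal $\aleph_1$-subtree of $\T$ and that every two of its already-built levels are coordinated with respect to $\LE$ in the sense of Definition~\ref{Omega_coordinated}. Limit levels are populated using the $q$-elevator machinery of Definition~\ref{n-elevator}: at each $\alpha\in\Lambda$ we enumerate the ladder $C_\alpha$ from Fact~\ref{proxystar} as $\langle\beta_n\mid n<\omega\rangle$, fix a summable sequence $\langle q_n\mid n<\omega\rangle\s\bar{\mathbb Q}$, and thread compatible branches from $T_{\beta_0}$ up through the $T_{\beta_n}$'s by composing successive $q_n$-elevators supplied by coordination and by Lemma~\ref{coordination-transitive}. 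The diamond apparatus of Fact~\ref{diamond_hw1} is used to steer the individual elevator choices: at $\alpha\in\Lambda\cap R_i$, the predicted set $\Omega_\alpha$ reveals the countable information about a target subset of $H_{\omega_1}$ relevant to task $i$.

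Three tasks are distributed over three stationary pieces of the partition $\langle R_i\rangle$. For \emph{almost-Souslin}, whenever $\Omega_\alpha$ codes a maximal antichain $A$ of $T\restriction\alpha$, we constrain the elevators crossing level $\alpha$ to land on nodes extending elements of $A$, so that every uncountable antichain caught by the club-density clause of Fact~\ref{proxystar} is forced to have nonstationary height projection; perfectness of $X_T$ then follows for free by Hanazawa's theorem cited in the introduction. For \emph{property $\gamma$, hence normality via Fact~\ref{devlinshelah}(2)}, whenever $\Omega_\alpha$ codes an antichain $A$ we shape a closed neighborhood of $A$, using the $U(x,q)\cap T$ basic opens, to lie on a small set of levels. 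For \emph{non-cmc of} $(X_T)^2$, at each $\alpha$ in the third piece we append a pair $(u_\alpha,v_\alpha)\in T_\alpha\times T_\alpha$ with $u_\alpha\neq v_\alpha$ to a growing $\s^2$-antichain $A\s T^2$ indexed by a stationary $S\s\omega_1$, and engineer $\LE$ so that the decreasing closed sequence $F_n:=\{(u_\alpha,v_\alpha)\mid\alpha\in S\setminus\gamma_n\}$ in $(X_T)^2$ (for a cofinal $\langle\gamma_n\rangle$) has empty intersection, while every open expansion $U_n\supseteq F_n$ is forced---by a Fodor argument on the resulting regressive assignment of basic-neighborhood depths, combined with the $\LE$-coordination across levels above each $\gamma_n$---to contain a common point $(x,y)\in\bigcap_n U_n$.

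The main obstacle will be to reconcile perfect normality of $X_T$ with the failure of cmc in $(X_T)^2$, since the former demands generous separation of closed sets while the latter demands an obstinate decreasing sequence of closed sets in the square that resists any such expansion. The resolution hinges on the decoupling of $\s^2$ from $\LE$ afforded by Definition~\ref{coarsening}: the fine $\s$-structure of $T$ alone supplies all the flexibility needed for almost-Souslin and property $\gamma$ on $X_T$, whereas the coarser $\LE$ on $T^2$---precisely what Lemma~\ref{coordination-transitive} preserves across limit levels---is what forces a $\LE$-compatible continuation above each $\gamma_n$ to be present in every open expansion of the designated antichain in $(X_T)^2$. After the construction, verification of the four target properties follows by combining Fact~\ref{proxystar} (club density), Proposition~\ref{prop26} (closed-discreteness of $\s^2$-antichains), and Fact~\ref{devlinshelah}.
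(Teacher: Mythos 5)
Your overall architecture---building $T\s\T$ by recursion together with a tree-coarsening $\LE$, maintaining pairwise coordination of levels via $q$-elevators and Lemma~\ref{coordination-transitive}, using Fact~\ref{proxystar} for the ladders and Fact~\ref{diamond_hw1} to steer the limit-level elevator choices, sealing predicted antichains to get almost-Souslinness, and quoting Hanazawa for perfectness and Fact~\ref{devlinshelah}(2) for normality---is essentially the paper's. The genuine gap is in the non-cmc part. First, your witnessing sequence $F_n=\{(u_\alpha,v_\alpha)\mid\alpha\in S\setminus\gamma_n\}$ cannot vanish: a countable sequence $\langle\gamma_n\mid n<\omega\rangle$ is bounded in $\omega_1$, so $\bigcap_{n<\omega}F_n\supseteq\{(u_\alpha,v_\alpha)\mid\alpha\in S\setminus\sup_n\gamma_n\}\neq\emptyset$, and this intersection is determined by the index sets alone, so no engineering of $\LE$ can change it. The device actually needed is to attach a \emph{natural number} to each antichain element (in the paper, $g(\alpha)=\min(K_\alpha)$ for $\alpha\in E$) and to let $D_n:=\{a_\epsilon\mid g(\epsilon)\ge n\}$, which vanishes because each $g(\epsilon)$ is finite.

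Second, your claim that every open expansion $U_n\supseteq F_n$ must contain a common point ``by a Fodor argument combined with $\LE$-coordination'' cannot work without extra guessing built into the construction: for $a_\alpha$ with $\cf(\alpha)=\omega$, the neighbourhood depths witnessing $a_\alpha\in U_n$ for the various $n$ may climb cofinally to $\alpha$, so no single pressing-down yields one pair lying in all $U_n$; and no construction-free argument of this sort can exist, since (as noted after Corollary~\ref{cor21}) after adding a Cohen real over a model of $\ma$, \emph{every} $\mathbb R$-embeddable $\aleph_1$-tree has $(X_T)^2$ cmc. What is missing from your proposal is the diamond-prediction of $\omega$-valued functions on $T^2$ (equivalently, of prospective open expansions): in the paper, Case~II of the limit construction forces the branches $\mathbf b^\alpha_{\langle0\rangle},\mathbf b^\alpha_{\langle1\rangle}$ to realize a guessed value of the predicted $f$ cofinally often below $\alpha$, the level $\alpha$ enters $E$ exactly when $K_\alpha\neq\emptyset$, and Claim~\ref{claim285} then converts any candidate sequence $\langle U_n\mid n<\omega\rangle$ into a function $f$ (least $n$ with the point outside $U_n$) that is defeated at some $a_\alpha$: openness of $U_{g(\alpha)}$ at $a_\alpha$ forces $f>g(\alpha)$ on a tail of restrictions of $a_\alpha$, while the guessing forces $f\le g(\alpha)$ cofinally below $\alpha$. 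Your three guessing tasks predict only antichains and never functions on $(T\restriction\alpha)^2$, so the non-cmc conclusion does not follow from what you have set up; the almost-Souslin and normality parts, by contrast, are in line with the paper's proof.
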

\begin{proof} As $\ch$ holds, let $\phi:\omega_1\leftrightarrow H_{\omega_1}$ be any bijection,
and let $\lhd_{\omega_1}$ be the induced well-ordering of $H_{\omega_1}$.
As $\diamondsuit(\omega_1)$ holds, let $\langle R_i\mid i<\omega_1\rangle$ and $\langle \Omega_\beta\mid\beta<\omega_1\rangle$ be given by Fact~\ref{diamond_hw1}.
Let $\pi:\omega_1\rightarrow\omega_1$ be the unique function satisfying $\alpha\in R_{\pi(\alpha)}$ for all $\alpha<\omega_1$.
Then, set $\psi:=\phi\circ \pi$.
Finally, as $\diamondsuit^*(\omega_1)$ holds, let $\langle C_\alpha\mid\alpha<\omega_1\rangle$ be given by Fact~\ref{proxystar}.
We may assume that $\min(C_\alpha)=1$ for every $\alpha\in\Lambda$.

We shall construct an $\aleph_1$-tree $T\s\T$ along with a subset $E\s\omega_1$
and a $\s^2$-antichain $\langle a_\epsilon\mid \epsilon\in E\rangle\in\prod_{\epsilon\in E}(T_\epsilon)^2$.

This $\s^2$-antichain will induce a tree-coarsening $\LE$ of $\s^2$ defined by letting $(x_0,x_1)\LE (y_0,y_1)$ iff:
\begin{enumerate}[label=(\(\protect\AlephOrBeth\))]
\item $(x_0,x_1)\s^2 (y_0,y_1)$, and
\item\label{beth} for every $j<2$, if $a_\epsilon\nsubseteq^2 (x_j,x_{1-j})$ for every $\epsilon\in E\cap(\dom(x_0)+1)$,
then $a_\epsilon\nsubseteq^2 (y_j,y_{1-j})$ for every $\epsilon\in E\cap(\dom(y_0)+1)$.
\end{enumerate}

The construction of the tree $T$ is by recursion on $\alpha<\omega_1$, where at stage $\alpha$, we determine a countable set $T_\alpha$ (the $\alpha^{\text{th}}$ level of $T$),
decide whether $\alpha$ is in $E$, and if it is, determine also an element $a_\alpha\in(T_\alpha)^2$.

For all $\beta<\alpha<\omega_1$, we will ensure that $T_\beta$ and $T_\alpha$ are coordinated.

For any ordinal $\alpha<\omega_1$ such that $\langle T_\beta\mid\beta<\alpha\rangle$ has already been determined
and for every $B\s\alpha$, we shall write $T\restriction B:=\bigcup_{\beta\in B}T_\beta$,
and we observe that the restriction of $\LE$ to $(T\restriction\alpha)^2$ is determined by the initial segment
$\langle a_\epsilon\mid \epsilon\in E\cap\alpha\rangle$ of the eventual $\s^2$-antichain.
We also observe the following:

\begin{claim}\label{claim2141} For all $\alpha<\omega_1$ and $(x_0,x_1),(y_0,y_1)$ in $(T\restriction \alpha)^2$,
if the interval $(\dom(x_0),\alpha)$ is disjoint from $E$, then
$$(x_0,x_1)\LE (y_0,y_1) \iff (x_0,x_1)\s^2 (y_0,y_1).\eqno\qed$$
\end{claim}

The preparations are over, and we now turn to the construction.
We start by setting $T_0:=\{\emptyset\}$ and deciding that $0\notin E$.

Next, given $\alpha<\omega_1$ such that $T_\alpha$ has already been defined, we set
\begin{align*}
T_{\alpha+1}:={}&\{ x \in {}^{\alpha+1}\mathbb{Q} \mid (x\restriction\alpha)\in T_\alpha \}\cap\T\\
={}&\{t^\smallfrown\langle q\rangle\mid t\in T_\alpha,~ q\in\mathbb Q, ~ q>c(t)\},
\end{align*}
and decide not to include $\alpha+1$ in $E$.
As $T_\alpha$ is a countable set, so is $T_{\alpha+1}$.

\begin{claim} For every $\beta<\alpha+1$, $T_\beta$ and $T_{\alpha+1}$ are coordinated.
\end{claim}
\begin{proof} As $\alpha+1\notin E$, Claim~\ref{claim2141} implies that
for all $(x_0,x_1)\in T_\alpha\times T_\alpha$ and $(y_0,y_1)\in T_{\alpha+1}\times T_{\alpha+1}$, $(x_0,x_1)\LE(y_0,y_1)$ iff $(x_0,x_1)\s^2(y_0,y_1)$.
Thus, by Lemma~\ref{coordination-transitive} and the induction hypothesis, it suffices to prove that $T_\alpha$ and $T_{\alpha+1}$ are coordinated.
To this end, let $q\in\bar{\mathbb Q}$ be given, and we shall go over the three clauses of Definition~\ref{Omega_coordinated}:
\begin{enumerate}[start=0]
\item Given a finite $W\s T_{\alpha+1}$, set
$$r:=\min\{q,c(w)-c(w\restriction \alpha)\mid w\in W\},$$
and then fix a system $\langle q_t\mid t\in T_\alpha\rangle$ of rational numbers
such that $c(t)<q_t<c(t)+r$ for every $t\in T_\alpha$.
Define a map $e:T_\alpha\rightarrow T_{\alpha+1}$ via $e(t):=t{}^\smallfrown \langle q_t\rangle$.
As $\alpha+1\notin E$ and $r\leq q$, $e$ is a $q$-elevator.
For every $w \in W$, $c(e(w\restriction\alpha)) = q_{w\restriction\alpha} < c(w\restriction\alpha)+r \leq c(w)$,
so that $e(w\restriction\alpha) \neq w$. Thus, it is also the case that $\im(e)\cap W=\emptyset$.

\item Let $x\in T_\alpha$, $y\in U(x,q)\cap T_{\alpha+1}$ and a finite set $W\s T_{\alpha+1}\setminus \{y\}$
be given; we need to find a $q$-elevator $e:T_\alpha\rightarrow T_{\alpha+1}\setminus W$ such that $e(x)=y$.
Obtain $r$ and $\langle q_t\mid t\in T_\alpha\rangle$ as in Clause~(1), and then define a map $e:T_\alpha\rightarrow T_{\alpha+1}$ via
$$e(t):=\begin{cases}
y,&\text{if }t=x;\\
t^\smallfrown \langle q_t\rangle,&\text{otherwise}.
\end{cases}$$

As $\alpha+1\notin E$, $e$ is a $q$-elevator. As $y\notin W$, it is also the case that $\im(e)\cap W=\emptyset$, just as in Clause~(1).

\item Let $\{x_0,x_1\}\in [T_\alpha]^2$, $(y_0,y_1)\in (U(x_0,q)\cap T_{\alpha+1})\times(U(x_1,q)\cap T_{\alpha+1})$ such that $(x_0,x_1)\LE (y_0,y_1)$,
and a finite set $W\s T_{\alpha+1}\setminus \{y_0,y_1\}$ be given;
we need to find a $q$-elevator $e:T_\alpha\rightarrow T_{\alpha+1}\setminus W$ such that
$e(x_0)=y_0$ and $e(x_1)=y_1$.
Obtain $r$ and $\langle q_t\mid t\in T_\alpha\rangle$ as in Clause~(1),
and then define a map $e:T_\alpha\rightarrow T_{\alpha+1}$ via
$$e(t):=\begin{cases}
y_0,&\text{if }t=x_0;\\
y_1,&\text{if }t=x_1;\\
t^\smallfrown \langle q_t\rangle,&\text{otherwise}.
\end{cases}$$
Then, $e$ is a $q$-elevator as sought.\qedhere
\end{enumerate}
\end{proof}
Now, fix a given $\alpha\in\Lambda$ such that $\langle T_\beta \mid \beta<\alpha \rangle$ and $\langle a_\epsilon\mid \epsilon\in E\cap\alpha\rangle$ have already been successfully defined.
In particular, for all $\gamma<\beta<\alpha$, $T_\gamma$ and $T_\beta$ are coordinated.

Consider the collection $\mathcal B^\alpha:=\{ t \in {}^\alpha\mathbb{Q} \mid \forall\beta<\alpha\, (t\restriction\beta \in T_\beta) \}$
of all cofinal branches through $T\restriction\alpha$.
For each $x\in T\restriction C_\alpha$
we shall carefully identify some element $\mathbf b_x^{\alpha}\in \mathcal B^\alpha\cap\T$ with $x\s \mathbf b_x^{\alpha}$,
and we shall then define the $\alpha^{\text{th}}$ level of the tree to be $$T_\alpha:=\{\mathbf b_x^{\alpha}\mid x\in T\restriction C_\alpha\}.$$

To this end, we denote by $\langle\beta_n\mid n<\omega\rangle$ the increasing enumeration of $C_\alpha$,
and we plan to construct, recursively, a sequence $\langle (e_n,q_n)\mid n<\omega\rangle$,
where, for every $n<\omega$, $e_n:T_{\beta_n}\rightarrow T_{\beta_{n+1}}$ is a $q_{n+1}$-elevator.\footnote{Strictly speaking, the notation should have been $\beta_n^\alpha$, $e_n^\alpha$ and $q_n^\alpha$,
but we suppress the superscript for brevity as we will always be working in the context of a fixed value of $\alpha$.}
For each $x\in T\restriction C_\alpha$, those $e_n$'s will determine $\mathbf{b}_x^{\alpha}$ as the limit $\bigcup\im(b_x^{\alpha})$ of the unique $\s$-increasing sequence $b_x^{\alpha}$ satisfying the following three properties:
\begin{enumerate}[label=(\roman*)]
\item\label{clausei} $\dom(b^\alpha_x)=\{\beta_n\mid n<\omega\}\setminus \dom(x)$;
\item\label{clauseii} $b_x^{\alpha}(\dom(x))=x$;
\item\label{clauseiii} for every $n<\omega$ with $\beta_n\ge\dom(x)$, $b_x^{\alpha}(\beta_{n+1})=e_n(b_x^\alpha(\beta_n))$.
\end{enumerate}
In particular, for the unique $k<\omega$ such that $\dom(x)=\beta_k$,\footnote{Recall that $\dom(x)$ is the height of $x$ in the tree, and that $x\in T\restriction C_\alpha$.}
it would be the case that $b_x^{\alpha}(\beta_{n})\in T_{\beta_n}$ for every $n\in\omega\setminus k$.

As for the $q_n$'s, we set $q_0:=1$, and announce at the outset that for every $n<\omega$,
there will be three possible cases; in the first two, we shall let $q_{n+1}:=\frac{q_n}{2}$,
and in the third, we shall let $q_{n+1}:=\frac{q_n}{8}$. Consequently, for every $n<\omega$,
if we fall into the first two cases, then $(\sum_{m=n+1}^\infty q_m)\le q_n$,
and otherwise, $(\sum_{m=n+1}^\infty q_m)\le \frac{q_n}{4}$.
In particular, $\lim_{n\rightarrow\infty}q_n=0$.

We now turn to the actual construction.
Suppose $n<\omega$ is such that the sequence $\langle (e_k,q_{k+1})\mid k<n\rangle$ has already been defined.
In particular, Clauses \ref{clausei}--\ref{clauseiii} have already determined $b_x^\alpha(\beta_n)$ for every $x\in T\restriction(C_\alpha\cap\beta_{n+1})=\bigcup_{k\le n}T_{\beta_k}$.
As announced, the definition of $(e_n,q_{n+1})$ is divided into three cases. They read as follows:
\begin{description}
\item[Case~I] Suppose that all of the following hold:
\begin{itemize}
\item $\Omega_{\beta_{n+1}}$ is a subset of $T\restriction \beta_{n+1}$,
\item $w:=\psi(\beta_{n+1})$ is an element of $T\restriction(C_\alpha\cap \beta_{n+1})$,
so that, in particular, $b_w^\alpha(\beta_n)$ is an element of $T_{\beta_n}$, and
\item the set $Q_{n+1}^{\alpha}:=\{t\in U(b_w^{\alpha}(\beta_n),\frac{q_n}{2})\cap T_{\beta_{n+1}}\mid \exists a\in \Omega_{\beta_{n+1}}\, (a\s t)\}$ is nonempty.
\end{itemize}
In this case, set $q_{n+1}:=\frac{q_n}{2}$, and choose some $q_{n+1}$-elevator $e_n:T_{\beta_n}\rightarrow T_{\beta_{n+1}}$ such that $e_n(b_w^{\alpha}(\beta_n))=\min(Q^{\alpha}_{n+1},{\lhd_{\omega_1}})$,
which must exist by Clause~\ref{cord1} of coordination of $T_{\beta_n}$ and $T_{\beta_{n+1}}$.

\item[Case~II] Suppose that all of the following hold:
\begin{itemize}
\item $\Omega_{\beta_{n+1}}$ is a function from $(T\restriction\beta_{n+1})^2$ to $\omega$,
\item $k:=\psi(\beta_{n+1})$ is an element of $\omega$, and
\item the set $P^{\alpha}_{n+1}$ of all pairs $(t_1,t_2)\in (T_{\beta_{n+1}})^2$ satisfying all of the following is nonempty:
\begin{itemize}
\item $(t_1,t_2)\in U(b_{\langle1\rangle}^{\alpha}(\beta_n),\frac{q_n}{2})\times U(b_{\langle2\rangle}^{\alpha}(\beta_n),\frac{q_n}{2})$,\footnote{Recall that $\beta_0 = \min(C_\alpha)=1$,
so that $\langle1\rangle$ and $\langle2\rangle$ are in $T\restriction (C_\alpha\cap\beta_{n+1})$
and therefore, in particular, both $b_{\langle1\rangle}^\alpha(\beta_n)$ and $b_{\langle2\rangle}^\alpha(\beta_n)$ are in $T_{\beta_n}$.}
\item $(b_{\langle1\rangle}^{\alpha}(\beta_n),b_{\langle2\rangle}^{\alpha}(\beta_n))\LE (t_1,t_2)$, and
\item there is $\tau\in[\beta_n,\beta_{n+1})$ such that $\Omega_{\beta_{n+1}}(t_1\restriction \tau,t_2\restriction \tau)=k$.
\end{itemize}
\end{itemize}
In this case, set $(t_1,t_2):=\min(P^{\alpha}_{n+1},{\lhd_{\omega_1}})$,
$q_{n+1}:=\frac{q_n}{2}$, and choose some $q_{n+1}$-elevator $e_n:T_{\beta_n}\rightarrow T_{\beta_{n+1}}$ such that $e_n(b_{\langle j\rangle}^{\alpha}(\beta_n))=t_j$ for $j\in\{1,2\}$,
which must exist by Clause~\ref{cord2} of coordination of $T_{\beta_n}$ and $T_{\beta_{n+1}}$.
\item[Case~III] Otherwise.
In this case, set $q_{n+1}:=\frac{q_n}{8}$ and choose any $q_{n+1}$-elevator $e_n:T_{\beta_n}\rightarrow T_{\beta_{n+1}}$,
which must exist by Clause~\ref{cord0} of coordination of $T_{\beta_n}$ and $T_{\beta_{n+1}}$.
\end{description}

We record the following crucial features that follow from the above construction together with Definition~\ref{n-elevator}, Remark~\ref{elevator-composition}, and Proposition~\ref{359}(4):
\begin{claim}\label{cfeature} For all $n<m<\omega$ and $x,x'\in T\restriction(C_\alpha\cap\beta_{n+1})$, the following hold:
\begin{itemize}
\item $\mathbf{b}_x^\alpha\restriction\beta_n = b_x^\alpha(\beta_n)$;
\item $(b_x^\alpha(\beta_n),b_{x'}^\alpha(\beta_n))\LE (b_x^\alpha(\beta_m),b_{x'}^\alpha(\beta_m))$;
\item if $e_n$ was defined according to Case~III, then $\mathbf b_x^\alpha\in U(b_x^\alpha(\beta_n),\frac{q_n}{4})$;\footnote{The importance of $\frac{q_n}{4}$ will become clear in the proof of \hyperref[c218812]{Subclaim~\the\numexpr\getrefnumber{c21881}(2)}.}
otherwise, $\mathbf b_x^\alpha\in U(b_x^\alpha(\beta_n),{q_n})$. So, in either case, $\mathbf b_x^\alpha\in\T$. \qed
\end{itemize}
\end{claim}

Having completed the recursive construction of $\langle (e_n,q_n)\mid n<\omega\rangle$,
for each $x\in T\restriction C_\alpha$, the corresponding ascending sequence $b_x^\alpha$
and its limit $\mathbf b_x^{\alpha}:=\bigcup\im(b_x^{\alpha})$ have been completely determined, so we now set, as promised,
$$T_\alpha:=\{\mathbf b_x^{\alpha}\mid x\in T\restriction C_\alpha\},$$
which is a subset of $\T$ by the preceding claim. As $T\restriction\alpha$ is a countable set, so is $T_\alpha$.

\begin{claim}\label{tailf}
For every $y\in T_\alpha$, there are co-finitely many $m<\omega$ such that $y=\mathbf b_{y\restriction \beta_m}^\alpha$.
\end{claim}
\begin{proof} Given $y\in T_\alpha$, by definition of $T_\alpha$ there is some $x\in T\restriction C_\alpha$ such that $y=\mathbf b_x^{\alpha}$.
Choose any such $x$, and let $k<\omega$ be such that $\dom(x)=\beta_k$.
Consider any given $m\in[k,\omega)$; we will show that $\mathbf b_{y\restriction \beta_m}^\alpha=y$.\footnote{The same proof will show that $(y=\mathbf b_{y\restriction \beta_k}^\alpha)\implies(y=\mathbf b_{y\restriction \beta_{k+1}}^\alpha)$ for every $k<\omega$, but we shall not need that.}
Observe that $\mathbf b^\alpha_{y\restriction\beta_m}=\bigcup\{b_{y\restriction\beta_m}^\alpha(\beta_n)\mid m\le n<\omega\}$
and, since $k\leq m$ and the sequence $b_x^\alpha$ is $\s$-increasing,
$y=\mathbf{b}_x^\alpha=\bigcup\{b_{x}^\alpha(\beta_n)\mid k\le n<\omega\}
=\bigcup\{b_{x}^\alpha(\beta_n)\mid m\le n<\omega\}$.
Thus, it suffices to show that $b_{y\restriction \beta_m}^\alpha(\beta_n)=b_x^\alpha(\beta_n)$ for all $n\in[m,\omega)$.
We prove this by induction:
\begin{itemize}
\item For $n=m$, Clause~\ref{clauseii} (see page~\pageref{clauseii}) gives
$$b_{y\restriction \beta_m}^\alpha(\beta_m)=y\restriction\beta_m=\mathbf b_x^{\alpha}\restriction\beta_m=b_x^\alpha(\beta_m).$$
\item For every $n\in[m,\omega)$ such that $b_{y\restriction \beta_m}^\alpha(\beta_{n})=b_x^\alpha(\beta_{n})$,
as $\dom(x)=\beta_k \leq \beta_m=\dom(y\restriction\beta_m) \leq \beta_n$,
Clause~\ref{clauseiii} gives
$$b_{y\restriction \beta_m}^\alpha(\beta_{n+1})=e_{n}(b_{y\restriction \beta_m}^\alpha(\beta_{n}))=e_{n}(b_x^\alpha(\beta_{n}))=b_x^\alpha(\beta_{n+1}),$$
completing the induction and thereby proving the claim.
\qedhere
\end{itemize}
\end{proof}

At this point, we need to decide whether to include $\alpha$ in $E$, and if so, then also to determine the identity of $a_\alpha$.

If $\Omega_\alpha$ happens to be a function from $(T\restriction\alpha)^2$ to $\omega$, then consider the set
$$K_\alpha:=\{k<\omega\mid \sup\{\tau<\alpha\mid \Omega_\alpha(\mathbf b_{\langle1\rangle}^\alpha\restriction\tau,\mathbf b_{\langle2\rangle}^\alpha\restriction\tau)=k\}=\alpha\},$$
and in case $K_\alpha \neq \emptyset$, we include $\alpha$ in $E$ and set $a_\alpha:=(\mathbf b_{\langle1\rangle}^\alpha,\mathbf b_{\langle2\rangle}^\alpha)$.
Otherwise, we do not include $\alpha$ in $E$.

\begin{claim}\label{clanti} $\langle a_\epsilon\mid \epsilon\in E\cap(\alpha+1)\rangle$ is a $\s^2$-antichain.
\end{claim}
\begin{proof}
By the induction hypothesis, $\langle a_\epsilon\mid \epsilon \in E\cap\alpha\rangle$ is a $\s^2$-antichain.
Thus, it suffices to consider the case $\alpha\in E$ and prove that $a_\epsilon \nsubseteq^2a_\alpha$ for every $\epsilon\in E\cap\alpha$.
To this end, let $\epsilon\in E\cap\alpha$ be given.
Choose a large enough $m<\omega$ such that $\beta_m>\epsilon$. Appealing to Claim~\ref{cfeature} with $n:=0$, $x:=\langle1\rangle$ and $x':=\langle2\rangle$,
we obtain $(\langle1\rangle,\langle2\rangle) \LE (b_{\langle1\rangle}^\alpha(\beta_m), b_{\langle2\rangle}^\alpha(\beta_m))$.
By Clause~\ref{beth}, from $E\cap2 = \emptyset$, we obtain $a_\epsilon \nsubseteq^2 (b_{\langle1\rangle}^\alpha(\beta_m), b_{\langle2\rangle}^\alpha(\beta_m))$.
In particular, $a_\epsilon \nsubseteq^2 (\mathbf b_{\langle1\rangle}^\alpha,\mathbf b_{\langle2\rangle}^\alpha)$.
But the latter is equal to $a_\alpha$, so we are done.
\end{proof}

\begin{claim}\label{claim2145}
For every $\beta<\alpha$, $T_\beta$ and $T_\alpha$ are coordinated.
\end{claim}
\begin{proof} Before we start, for every $m<\omega$, we define a map $f_m:T_{\beta_m}\rightarrow T_\alpha$
via $f_m(x):=\mathbf b_x^\alpha$. While we cannot guarantee that $f_m$ is a $q_m$-elevator, we can nevertheless prove the following subclaim.
\begin{subclaim}\label{798} Suppose that $\bar e:T_\beta\rightarrow T_{\beta_m}$ is a $p$-elevator with $p\in{\mathbb Q}$,
$m<\omega$ and $\beta<\beta_m$.
If $\alpha\notin E$ or if $\{\mathbf b_{\langle1\rangle}^\alpha\restriction \beta_m,\mathbf b_{\langle2\rangle}^\alpha\restriction \beta_m\}\nsubseteq\im(\bar e)$,
then $f_m\circ\bar e$ is a $(p+q_m)$-elevator from $T_\beta$ to $T_\alpha$.
\end{subclaim}
\begin{proof} It follows from Claim~\ref{cfeature} that $f_m(x) \in U(x,q_m)$ for every $x \in T_{\beta_m}$.
Furthermore, by Claim~\ref{cfeature} and Clause~\ref{beth}, for every $(x_0,x_1)\in (T_{\beta_m})^2$,
if $a_\epsilon\nsubseteq^2 (x_0,x_1)$ for every $\epsilon\in E\cap(\beta_m+1)$, then $a_\epsilon\nsubseteq^2 (f_m(x_0),f_m(x_1))$ for every $\epsilon\in E\cap\alpha$.
So if, in addition, $\alpha\notin E$, then $f_m$ is a $q_m$-elevator, and then $f_m\circ\bar e$ is a $(p+q_m)$-elevator by Remark~\ref{elevator-composition}.

On the other hand, if $\alpha\in E$, then the fact that $f_m$ is \emph{not} a $q_m$-elevator is only because of the pair
$(x_0,x_1):=(\mathbf b_{\langle1\rangle}^\alpha\restriction \beta_m,\mathbf b_{\langle2\rangle}^\alpha\restriction \beta_m)$ in $(T_{\beta_m})^2$,
which satisfies $(f_m(x_0),f_m(x_1)) = (\mathbf b_{\langle1\rangle}^\alpha,\mathbf b_{\langle2\rangle}^\alpha) = a_\alpha$,
so that $(x_0,x_1) \not\LE (f_m(x_0),f_m(x_1))$ for this pair.
Thus, we still have that both $f_m\restriction(T_{\beta_m}\setminus\{\mathbf b_{\langle1\rangle}^\alpha\restriction \beta_m\})$
and $f_m\restriction(T_{\beta_m}\setminus\{\mathbf b_{\langle2\rangle}^\alpha\restriction \beta_m\})$
are $q_m$-elevators. So, for every $i\in\{1,2\}$ such that $\mathbf b_{\langle i\rangle}^\alpha\restriction \beta_m\notin\im(\bar e)$,
it is the case that $$f_m\circ\bar e=(f_m\restriction(T_{\beta_m}\setminus\{\mathbf b_{\langle i\rangle}^\alpha\restriction \beta_m\}))\circ\bar e$$
is the composition of a $q_m$-elevator and a $p$-elevator. Again, we are done by Remark~\ref{elevator-composition}.
\end{proof}

Let $\beta<\alpha$ and $q\in\bar{\mathbb Q}$ be given. We go over the clauses of Definition~\ref{Omega_coordinated}:
\begin{enumerate}[start=0]
\item Given a finite $W\s T_\alpha$, first, by possibly enlarging it, we ensure $\mathbf b_{\langle1\rangle}^\alpha\in W$.
We then find a large enough $m<\omega$ such that:
\begin{itemize}
\item $q_m<q$;
\item $\beta_m>\beta$.
\end{itemize}

Set $p:=q-q_m$ and $\bar W:=\{w\restriction \beta_m\mid w\in W\}$. As $T_\beta$ and $T_{\beta_m}$ are coordinated,
we may fix a $p$-elevator $\bar e:T_\beta\rightarrow T_{\beta_m}$ such that $\im(\bar e)\cap \bar W=\emptyset$.
In particular, $\mathbf b_{\langle1\rangle}^\alpha\restriction \beta_m\notin\im(\bar e)$.
By the subclaim, then, $e:=f_m\circ \bar e$ is a $q$-elevator from $T_\beta$ to $T_\alpha$. In addition,
$\im(e)\cap W=\emptyset$, so we are done.

\item Let $x\in T_\beta$, $y\in U(x,q)\cap T_\alpha$ and a finite set $W\s T_\alpha\setminus \{y\}$
be given; we need to find a $q$-elevator $e:T_\beta\rightarrow T_\alpha\setminus W$ such that $e(x)=y$.
First, choose some $i\in\{1,2\}$ such that $\mathbf b_{\langle i\rangle}^\alpha \neq y$.
By possibly enlarging $W$, we may assume that $\mathbf b_{\langle i\rangle}^\alpha\in W$.
Choose a large enough $p\in\mathbb Q\cap(0,q)$ such that $y\in U(x,p)$.
Recalling Claim~\ref{tailf}, we then find a large enough $m<\omega$ such that:
\begin{itemize}
\item $q_m<(q-p)$;
\item $\beta_m>\beta$;
\item $\mathbf b_{w\restriction \beta_m}^\alpha=w$ for every $w\in W\cup\{y\}$.
\end{itemize}

Set $\bar W:=\{w\restriction \beta_m\mid w\in W\}$ and $\bar y:=y\restriction \beta_m$, so that $\bar{y} \in U(x,p) \cap T_{\beta_m}$ and $\bar W\s T_{\beta_m}\setminus\{\bar y\}$.
As $T_\beta$ and $T_{\beta_m}$ are coordinated,
fix a $p$-elevator $\bar e:T_\beta\rightarrow T_{\beta_m}\setminus\bar W$ such that $\bar e(x)=\bar y$.
In particular, $\mathbf b_{\langle i\rangle}^\alpha\restriction \beta_m\notin\im(\bar e)$.
By the subclaim, then, $e:=f_m\circ \bar e$ is a $q$-elevator from $T_\beta$ to $T_\alpha$. In addition,
$\im(e)\cap W=\emptyset$ and $e(x)=y$, so we are done.

\item Let $\{x_0,x_1\}\in [T_\beta]^2$, $(y_0,y_1)\in (U(x_0,q)\cap T_\alpha)\times(U(x_1,q)\cap T_\alpha)$ such that $(x_0,x_1)\LE (y_0,y_1)$,
and a finite set $W\s T_\alpha\setminus \{y_0,y_1\}$ be given;
we need to find a $q$-elevator $e:T_\beta\rightarrow T_\alpha\setminus W$ such that
$e(x_0)=y_0$ and $e(x_1)=y_1$.
Choose a large enough $p\in\mathbb Q\cap(0,q)$ such that $y_0\in U(x_0,p)$ and $y_1\in U(x_1,p)$.
Then, again recalling Claim~\ref{tailf}, find a large enough $m<\omega$ such that:
\begin{itemize}
\item $q_m<(q-p)$;
\item $\beta_m>\beta$;
\item $\mathbf b_{w\restriction \beta_m}^\alpha=w$ for every $w\in W\cup\{y_0,y_1\}$.
\end{itemize}
We now consider three cases:
\begin{enumerate}[label=\textup{(\arabic{enumi}.\arabic*)}]
\item If $\alpha\notin E$, then set $\bar W:=\{w\restriction \beta_m\mid w\in W\}$, $\bar y_0:=y_0\restriction \beta_m$ and $\bar y_1:=y_1\restriction \beta_m$.
Clearly, $(\bar y_0,\bar y_1)\in (U(x_0,p)\cap T_{\beta_m})\times(U(x_1,p)\cap T_{\beta_m})$,
$(x_0,x_1)\LE (\bar y_0,\bar y_1)$ and $\bar W\s T_{\beta_m}\setminus \{\bar y_0,\bar y_1\}$.
As $T_\beta$ and $T_{\beta_m}$ are coordinated, we may fix a $p$-elevator $\bar e:T_\beta\rightarrow T_{\beta_m}\setminus\bar W$ such that $\bar e(x_j)=\bar y_j$ for every $j<2$.
Recalling that $\alpha\notin E$, by the subclaim, $e:=f_m\circ \bar e$ is a $q$-elevator from $T_\beta$ to $T_\alpha$.
In addition, $\im(e)\cap W=\emptyset$ and $e(x_j)=y_j$ for every $j<2$, so we are done.

\item If $\alpha\in E$ but $\{y_0,y_1\}\neq\{\mathbf b_{\langle1\rangle}^\alpha,\mathbf b_{\langle2\rangle}^\alpha\}$,
then choose some $i\in\{1,2\}$ such that $\mathbf b_{\langle i\rangle}^\alpha \notin \{y_0,y_1\}$.
By possibly enlarging $W$, we may assume that $\mathbf b_{\langle i\rangle}^\alpha\in W$.
Set $\bar W:=\{w\restriction \beta_m\mid w\in W\}$, $\bar y_0:=y_0\restriction \beta_m$ and $\bar y_1:=y_1\restriction \beta_m$.
As in the previous case, fix a $p$-elevator $\bar e:T_\beta\rightarrow T_{\beta_m}\setminus\bar W$ such that $\bar e(x_j)=\bar y_j$ for every $j<2$.
In particular, $\mathbf b_{\langle i\rangle}^\alpha\restriction \beta_m\notin\im(\bar e)$.
By the subclaim, then, $e:=f_m\circ \bar e$ is a $q$-elevator from $T_\beta$ to $T_\alpha$.
In addition, $\im(e)\cap W=\emptyset$ and $e(x_j)=y_j$ for every $j<2$, so we are done.

\item If $\alpha\in E$ and for some $j<2$, $$(y_j,y_{1-j})=(\mathbf b_{\langle1\rangle}^\alpha,\mathbf b_{\langle2\rangle}^\alpha)=a_\alpha,$$
then from $(x_0,x_1)\LE (y_0,y_1)$, Clause~\ref{beth} implies that there exists an $\epsilon\in E\cap(\beta+1)$ such that
\begin{align*}
a_\epsilon&\s^2 (x_j,x_{1-j})\\
&\s^2 (y_j,y_{1-j}) =a_\alpha,
\end{align*}
contradicting Claim~\ref{clanti}. So this case does not exist. \qedhere
\end{enumerate}
\end{enumerate}
\end{proof}

Finally, let $T:=\bigcup_{\alpha<\omega_1}T_\alpha$. This completes the construction of our $\aleph_1$-tree.
As $T$ is a subset of $\T$, it is $\mathbb R$-embeddable.

\begin{claim}\label{claim2148} $T$ is almost-Souslin.
\end{claim}
\begin{proof}
Let $A$ be an antichain in $T$,
and we shall show that $H:=\{\dom(a)\mid a\in A\}$ is nonstationary.
For every $i<\omega_1$, let $B_i$ denote the set of all $\beta\in R_i$ such that there exists a countable elementary submodel $\mathcal M\prec H_{\omega_2}$
containing $\{A,T\}$, satisfying $\mathcal M\cap\omega_1=\beta$ and $\mathcal M\cap A=\Omega_\beta$.
\begin{subclaim}\label{applicationoftwofacts} Let $i<\omega_1$.
\begin{enumerate}[label=\textup{(\arabic*)}]
\item $B_i$ is stationary in $\omega_1$;
\item there is a club $D_i\s\omega_1$ such that $\sup(C_\alpha\cap B_i)=\alpha$ for every $\alpha\in D_i$.
\end{enumerate} \end{subclaim}
\begin{proof} (1) Let $C$ be an arbitrary club in $\omega_1$,
and we shall prove that $B_i\cap C\neq\emptyset$.
Set $p:=\{C,A,T\}$ and $\Omega:=A$.
As $\langle \Omega_\beta\mid \beta<\omega_1\rangle$ and $\langle R_i\mid i<\omega_1\rangle$ were given by Fact~\ref{diamond_hw1},
we may find a countable elementary submodel $\mathcal M\prec H_{\omega_2}$ containing $p$ such that $\beta:=\mathcal M\cap\omega_1$ is in $R_i$ and $\mathcal M\cap\Omega=\Omega_\beta$.
As the club $C$ belongs to $\mathcal M$, $\beta\in C$, and hence $\mathcal M$ witnesses that $\beta\in B_i\cap C$.

(2) By Clause~(1) and the fact that $\langle C_\alpha \mid \alpha<\omega_1\rangle$ was given by Fact~\ref{proxystar}.
\end{proof}

Let $\langle D_i\mid i<\omega_1\rangle$ be given by the subclaim. So, the following set is a club in $\omega_1$:
$$D:=\{\alpha\in\Lambda\mid T\restriction\alpha\s \phi[\alpha]\ \&\ \forall i<\alpha\,(\alpha\in D_i)\}.$$

We claim that $D$ is disjoint from $H$. Suppose not; pick $\alpha\in D\cap H$, and then pick $a^*\in A\cap T_\alpha$.
Recalling the definition of $T_\alpha$, we may pick an $x\in T\restriction C_\alpha$ such that $a^*=\mathbf b_x^{\alpha}$.
As $\alpha\in D$, $T\restriction\alpha\s \phi[\alpha]$, so we may fix an $i<\alpha$ such that $\phi(i)=x$.
As $\alpha\in D$ and $i<\alpha$, $\sup(C_\alpha\cap B_i)=\alpha$, so letting $\langle \beta_n\mid n<\omega\rangle$ denote the increasing enumeration of $C_\alpha$,
we may fix a large enough $n<\omega$ such that $\beta_{n+1}\in B_i\setminus(\dom(x)+1)$. In particular, $\beta_{n+1}\in R_i$, so that
$$\psi(\beta_{n+1})=\phi(\pi(\beta_{n+1}))=\phi(i)=x.$$

As $\beta_{n+1}\in B_i$, we may now fix a countable elementary submodel $\mathcal M\prec H_{\omega_2}$
containing $\{A,T\}$ satisfying $\mathcal M\cap\omega_1=\beta_{n+1}$ and $\mathcal M\cap A=\Omega_{\beta_{n+1}}$.
By elementarity of $\mathcal M$, all of the following hold:
\begin{itemize}
\item $\mathcal M\cap T=T\restriction\beta_{n+1}$;
\item $\mathcal M\cap A=A\cap(T\restriction\beta_{n+1})$;
\item for all $y\in T\restriction\beta_{n+1}$ and $q\in{\mathbb Q}$, if $U(y,{q})\cap A$ is nonempty,
then so is $U(y,{q})\cap A\cap(T\restriction\beta_{n+1})$.
\end{itemize}

Consider $y:=b_x^{\alpha}(\beta_n)$. As $\Omega_{\beta_{n+1}}$ is a subset of $T\restriction \beta_{n+1}$,
and $\psi(\beta_{n+1})$ is equal to $x$ which is an element of $T\restriction(C_\alpha\cap \beta_{n+1})$,
we have that $\mathbf b_x^{\alpha}\restriction\beta_{n+1}=e_n(y)$, where $e_n$ is defined either according to Case~I or according to Case~III.
In each case we will derive a contradiction:

$\br$ If $e_n$ is defined according to Case~I, then $\mathbf b_x^{\alpha}\restriction\beta_{n+1}$ belongs to $Q_{n+1}^\alpha$
which means that it extends some element $a$ of $\Omega_{\beta_{n+1}}=A\cap(T\restriction\beta_{n+1})$,
and then $a\s \mathbf b_x^{\alpha}\restriction\beta_{n+1}\s \mathbf b_x^{\alpha}=a^*$, contradicting the fact that $a$ and $a^*$ are two distinct elements of the antichain $A$.

$\br$ If $e_n$ is defined according to Case~III, then Claim~\ref{cfeature} implies that $a^*=\mathbf b_x^{\alpha}\in U(y,\frac{q_n}{4})$. In particular, $U(y,\frac{q_n}{2})\cap A\neq\emptyset$.
By elementarity as above, it follows that $U(y,\frac{q_n}{2})\cap A\cap (T\restriction\beta_{n+1})$ --- equivalently, $U(b_x^{\alpha}(\beta_n),\frac{q_n}{2})\cap \Omega_{\beta_{n+1}}$ --- is nonempty.
Choose $a \in U(b_x^{\alpha}(\beta_n),\frac{q_n}{2})\cap \Omega_{\beta_{n+1}}$.
Choose a large enough $p\in\mathbb{Q}\cap(0,\frac{q_n}{2})$ such that $a \in U(b_x^{\alpha}(\beta_n),p)$,
and set $r := \frac{q_n}{2} -p$.
As $T_{\dom(a)}$ and $T_{\beta_{n+1}}$ are coordinated, fix an $r$-elevator $e: T_{\dom(a)} \to T_{\beta_{n+1}}$, and set $t := e(a)$.
Then $t \in U(b_x^{\alpha}(\beta_n),\frac{q_n}{2})\cap T_{\beta_{n+1}}$ is an extension of $a$.
In particular, $t$ witnesses that $Q_{n+1}^{\alpha}$ is nonempty, contradicting the fact that we are in Case~III.
\end{proof}

Since $T$ is $\mathbb R$-embeddable and almost-Souslin, \cite[Theorem~3]{MR679075} implies that $X_T$ is perfect.
As a topological space is perfectly normal iff it is perfect and normal, our next task is proving that $X_T$ is normal.
Recalling Fact~\ref{devlinshelah}(2), we now turn to prove the following.

\begin{claim}\label{2198} $T$ has property $\gamma$.
\end{claim}
\begin{proof} Let $A\s T$ be a given antichain, and we shall find a club $D\s\omega_1$ and two disjoint open sets $U$ and $V$ such that $A\s U$ and $T\restriction D\s V$.
Consider the club $D$ from the proof of Claim~\ref{claim2148}, which we already know is disjoint from $H:=\{\dom(a)\mid a\in A\}$, that is, $A\cap (T\restriction D)=\emptyset$.

As $A$ is an antichain, it is closed discrete, so by Fact~\ref{devlinshelah}(1), we may fix a pairwise disjoint family $\langle O_a\mid a\in A\rangle$
such that, for every $a\in A$, $O_a$ is an open neighborhood of $a$. For every $a\in A$, we define an open subset $U_a\s O_a$ as follows:
\begin{itemize}
\item if $\dom(a)\notin\Lambda$, then let $U_a:=\{a\}$;
\item otherwise, pick $a'\subsetneq a$ such that $(a',a]\s O_a$, and then, using $\dom(a)\in\Lambda\setminus D$ and the definition of $c(a)$,
pick some $a''\in T$ such that
\begin{itemize}
\item $a' \subsetneq a'' \subsetneq a$,
\item $c(a'')\ge\frac{c(a')+c(a)}{2}$, and
\item $\dom(a'')>\sup(D\cap\dom(a))$,
\end{itemize}
and finally let $U_a:=(a'',a]$.
\end{itemize}

Altogether, $U:=\bigcup_{a\in A}U_a$ is an open neighborhood of $A$.
Our next task is defining, for every $z\in T\restriction D$, an open neighborhood $V_z$ of $z$ which is disjoint from $U$.
This way, $V:=\bigcup_{z\in T\restriction D}V_z$ together with $U$ will be as sought.

To this end, let $z\in T\restriction D$. Write $\alpha:=\dom(z)$ and let $\langle \beta_n\mid n<\omega\rangle$ denote the increasing enumeration of $C_\alpha$.
Pick $x\in T\restriction C_\alpha$ such that $z=\mathbf b_x^\alpha$. As in the proof of Claim~\ref{claim2148}, fix a large enough $n<\omega$ such that all of the following hold:
\begin{itemize}
\item $\beta_{n+1}>\dom(x)$;
\item $\psi(\beta_{n+1})=x$;
\item there exists a countable elementary submodel $\mathcal M\prec H_{\omega_2}$
containing $\{A,T\}$ satisfying $\mathcal M\cap\omega_1=\beta_{n+1}$ and $\mathcal M\cap A=\Omega_{\beta_{n+1}}$.
\end{itemize}

Recall that $z\restriction\beta_{n+1}=e_n(b_x^{\alpha}(\beta_n))$, where $e_n$ is defined either according to Case~I or according to Case~III,
and $z\restriction\beta_n = b_x^\alpha(\beta_n)$.
Before we can define $V_z$, we need to observe the following.
\begin{subclaim}\label{c21881} Suppose that $e_n$ is defined according to Case~III.
For every $a\in A$ such that $U_a\cap (z\restriction \beta_{n+1},z]\neq\emptyset$, both of the following hold:
\begin{enumerate}[label=\textup{(\arabic*)}]
\item $a$ and $z$ are incomparable;
\item $\dom(a)\in\Lambda$ and $a'\subsetneq z\restriction \beta_n$.
\end{enumerate}
\end{subclaim}
\begin{proof} By Claim~\ref{cfeature}, we know that $z\in U(z\restriction{\beta_n},\frac{q_n}{4})$.
Fix a given $a \in A$ as in the hypothesis.

(1) Suppose that $a$ and $z$ are comparable, and we will derive a contradiction in each of the following three cases:
\begin{itemize}[label=$\blacktriangleright$]
\item If $a\subsetneq z\restriction \beta_{n+1}$, then $\dom(a)<\beta_{n+1}$, so that $U_a\cap (z\restriction\beta_{n+1},z]=\emptyset$.
\item If $z\restriction \beta_{n+1}\s a\s z$, then $a\in U(z\restriction{\beta_n},\frac{q_n}{4}) \cap A$,
so that the reflection argument at the end of the proof of Claim~\ref{claim2148} implies that we must have been in Case~I.
\item If $z\subsetneq a$, then $\dom(z)\in D\cap\dom(a)$, so that either $U_a = \{a\}$, or $U_a=(a'',a]$ with
$$\dom(a'')>\sup(D\cap\dom(a))\ge\dom(z),$$
in either case yielding $U_a \cap (z\restriction \beta_{n+1},z] = \emptyset$.
\end{itemize}

\phantomsection\label{c218812}
(2) From $U_a\cap (z\restriction \beta_{n+1},z]\neq\emptyset$ we infer that $\dom(a)\in\Lambda$ and, in addition to $a'$, $z\restriction \beta_{n}$ is below $a$.
So either $a'\subsetneq z\restriction \beta_{n}$ or $z\restriction \beta_{n}\s a'$.

Towards a contradiction, suppose that $z\restriction \beta_{n}\s a'$.
If $a\in U(z\restriction\beta_n,\frac{q_n}{2})$, then again by the argument of the last paragraph of the proof of Claim~\ref{claim2148}, we must have been in Case~I.
Thus, in fact, $c(a) \geq c(z\restriction\beta_n) + \frac{q_n}{2}$.
Together with $a'\supseteq z\restriction \beta_{n}$, this yields
$$c(a'')\ge\frac{c(a')+c(a)}{2}\ge\frac{c(z\restriction \beta_{n})+c(z\restriction \beta_{n})+\frac{q_n}{2}}{2}=c(z\restriction \beta_{n})+\frac{q_n}{4}.$$

Recalling that $z\in U(z\restriction{\beta_n},\frac{q_n}{4})$, we get $$c(a'')\ge c(z\restriction \beta_{n})+\frac{q_n}{4}>c(z),$$
which implies that the intervals $U_a=(a'',a]$ and $(\emptyset,z]$ are disjoint, contradicting the hypothesis.
\end{proof}

Finally, the definition of $V_z$ is divided as follows, where in all cases $V_z$ will be $(z\restriction\beta_m,z]$ for some $m\in[n+1,\omega)$:

$\br$ If $e_n$ is defined according to Case~I, then let $V_z:=(z\restriction \beta_{n+1},z]$.

$\br$ If $e_n$ is defined according to Case~III, then we consider two subcases:

$\br\br$ If $(z\restriction \beta_{n+1},z]\cap U=\emptyset$, then again let $V_z:=(z\restriction \beta_{n+1},z]$.

$\br\br$ Otherwise, pick $a\in A$ such that $(z\restriction \beta_{n+1},z]\cap U_a\neq\emptyset$.
By Subclaim~\ref{c21881}(1), $a$ and $z$ are incomparable,
so we may fix a large enough $m\in[n+1,\omega)$ such that $(z\restriction \beta_m,z]\cap U_a=\emptyset$,
and we let $V_z:=(z\restriction \beta_m,z]$.

\begin{subclaim} $V_z\cap U=\emptyset$.
\end{subclaim}
\begin{proof} Suppose not, and pick $b\in A$ such that $V_z\cap U_b\neq\emptyset$.
Let us come back to the above division, deriving a contradiction in each case:

$\br$ If $e_n$ is defined according to Case~I, then $z\restriction\beta_{n+1} = e_n(b_x^\alpha(\beta_n))$ belongs to $Q_{n+1}^\alpha$,
which means that it extends some element $a$ of $\Omega_{\beta_{n+1}}=A\cap(T\restriction\beta_{n+1})$.
But since $(z\restriction \beta_{n+1},z]\cap (b'',b]\neq\emptyset$, we get that $z\restriction \beta_{n+1}\subsetneq b$,
so that $a$ and $b$ are two distinct comparable elements of the antichain $A$. This is a contradiction.

$\br$ If $e_n$ is defined according to Case~III, then since $(z\restriction \beta_{n+1},z]\cap U\neq\emptyset$,
it was the case that we picked an $a\in A$ such that $(z\restriction \beta_{n+1},z]\cap U_a\neq\emptyset$,
and let $V_z:=(z\restriction \beta_m,z]$ for some $m\in[n+1,\omega)$ such that $(z\restriction \beta_m,z]\cap U_a=\emptyset$.
In particular, $V_z\cap U_a=\emptyset$, so $a\neq b$.
By Subclaim~\ref{c21881}(2), $a'\subsetneq z\restriction \beta_n$ and $b'\subsetneq z\restriction \beta_n$,
so $z\restriction \beta_n\in(a',a]\cap(b',b]\s O_a\cap O_b$, contradicting the fact that $O_a$ and $O_b$ are disjoint.
\end{proof}
This completes the proof.
\end{proof}

Next, to help us verify that $(X_T)^2$ is not cmc, we define an auxiliary map $g:E\rightarrow\omega$ via
$g(\alpha):=\min(K_\alpha)$.

\begin{claim}\label{claim285} For every function $f:T^2\rightarrow\omega$ there are stationarily many $\alpha\in E$
such that
$$\sup\{\tau<\alpha\mid f(\mathbf b_{\langle1\rangle}^\alpha\restriction\tau,\mathbf b_{\langle2\rangle}^\alpha\restriction\tau)=g(\alpha)\}=\alpha.$$
\end{claim}
\begin{proof} Given $f:T^2\rightarrow \omega$, for every $k<\omega$, let $B_k$ be the set of all $\beta\in R_{\phi^{-1}(k)}$ such that there exists a countable elementary submodel $\mathcal M\prec H_{\omega_2}$ such that all of the following hold:
\begin{itemize}
\item $\{T, f\}\in \mathcal M$;
\item $\beta=\mathcal M\cap\omega_1$, and
\item $\Omega_\beta=f\restriction(T\restriction\beta)^2$.
\end{itemize}

A proof similar to that of Subclaim~\ref{applicationoftwofacts} establishes that
$$A:=\{\alpha\in B_0\mid \forall k<\omega\,[\sup(C_\alpha\cap B_k)=\alpha]\}$$ is a stationary subset of $B_0$.

\begin{subclaim}\label{ase} $A\s E$.
\end{subclaim}
\begin{proof} Suppose not, and let $\alpha\in A\setminus E$.
Let $\langle \beta_n\mid n<\omega\rangle$ denote the increasing enumeration of $C_\alpha$.
By Claim~\ref{cfeature}, for every $n<\omega$,
$$(b_{\langle1\rangle}^\alpha(\beta_n),b_{\langle2\rangle}^\alpha(\beta_n))\LE (b_{\langle1\rangle}^\alpha(\beta_{n+1}),b_{\langle2\rangle}^\alpha(\beta_{n+1})).$$
So, since $\alpha\notin E$, we obtain furthermore that, for every $n<\omega$,
$$(b_{\langle1\rangle}^\alpha(\beta_n),b_{\langle2\rangle}^\alpha(\beta_n))\LE (\mathbf b_{\langle1\rangle}^\alpha,\mathbf b_{\langle2\rangle}^\alpha).$$

As $\alpha\in A\s B_0$, the following equation holds
$$K_\alpha=\{k<\omega\mid \sup\{\tau<\alpha\mid f(\mathbf b_{\langle1\rangle}^\alpha\restriction\tau,\mathbf b_{\langle2\rangle}^\alpha\restriction\tau)=k\}=\alpha\}.$$
We shall show that $k:=f(\mathbf b_{\langle1\rangle}^\alpha,\mathbf b_{\langle2\rangle}^\alpha)$ belongs to $K_\alpha$,
in particular, $K_\alpha\neq\emptyset$, contradicting the fact that $\alpha\notin E$.

From $\alpha \in A$, we infer that the set $N:=\{ n<\omega\mid \beta_{n+1}\in B_k\}$ is infinite.
Thus, to prove that $k\in K_\alpha$, it suffices to prove that for every $n\in N$,
there exists some $\tau\in[\beta_n,\beta_{n+1})$ such that $f(\mathbf b_{\langle1\rangle}^{\alpha}\restriction\tau,\mathbf b_{\langle2\rangle}^{\alpha}\restriction\tau)=k$.

To this end, let $n\in N$.
As $\Omega_{\beta_{n+1}}=f\restriction (T\restriction\beta_{n+1})^2$ and $\psi(\beta_{n+1})=k$,
we have that $b_{\langle j\rangle}^{\alpha}(\beta_{n+1})=e_n(b_{\langle j\rangle}^{\alpha}(\beta_{n}))$ for $j\in\{1,2\}$,
where $e_n$ is defined either according to Case~II or according to Case~III.

$\br$ If $e_n$ is defined according to Case~II, then $(b_{\langle1\rangle}^{\alpha}(\beta_{n+1}),b_{\langle2\rangle}^{\alpha}(\beta_{n+1}))$ belongs to $P_{n+1}^\alpha$
which means that there indeed exists some $\tau\in[\beta_n,\beta_{n+1})$ such that $f(\mathbf b_{\langle1\rangle}^{\alpha}\restriction\tau,\mathbf b_{\langle2\rangle}^{\alpha}\restriction\tau)=k$.

$\br$ If $e_n$ is defined according to Case~III, then Claim~\ref{cfeature} implies that $\mathbf b_{\langle j\rangle}^{\alpha}\in U(b^\alpha_{\langle j\rangle}(\beta_n),\frac{q_n}{4})\s U(b^\alpha_{\langle j\rangle}(\beta_n),\frac{q_n}{2})$ for $j\in\{1,2\}$.
Pick a countable $\mathcal M\prec H_{\omega_2}$ that includes $\{T,f\}$ such that $\mathcal M\cap\omega_1=\beta_{n+1}$ and $\Omega_{\beta_{n+1}}=f\restriction(T\restriction\beta_{n+1})^2$.
So, $\mathcal M$ reflects the above properties of the pair $(\mathbf b_{\langle1\rangle}^\alpha,\mathbf b_{\langle2\rangle}^\alpha)$, meaning that there exists a pair $(r_1,r_2)\in (T\restriction\beta_{n+1})^2$ such that
\begin{itemize}
\item $r_1\in U(b_{\langle1\rangle}^\alpha(\beta_n),\frac{q_n}{2})$,
\item $r_2\in U(b_{\langle2\rangle}^\alpha(\beta_n),\frac{q_n}{2})$,
\item $(b_{\langle1\rangle}^\alpha(\beta_n),b_{\langle2\rangle}^\alpha(\beta_n))\LE (r_1,r_2)$, and
\item $f(r_1,r_2)=k$.
\end{itemize}

As $T_{\dom(r_1)}$ and $T_{\beta_{n+1}}$ are coordinated, it follows that $P_{n+1}^\alpha$ is nonempty, contradicting the fact that we are in Case~III.
\end{proof}

Consider any given $\alpha\in A$, so that $\Omega_\alpha=f\restriction(T\restriction\alpha)^2$.
By Subclaim~\ref{ase}, $\alpha\in E$, which means that $K_\alpha\neq \emptyset$. So, by the definition of $g$,
$$\sup\{\tau<\alpha\mid f(\mathbf b_{\langle1\rangle}^\alpha\restriction \tau,\mathbf b_{\langle2\rangle}^\alpha\restriction\tau)=g(\alpha)\}=\alpha,$$
as sought.
\end{proof}

\begin{claim}\label{21910} $(X_T)^2$ is not cmc.
\end{claim}
\begin{proof} By \cite[Theorem~2.2]{MR353254}, a space is cmc iff for every $\s$-decreasing sequence $\langle D_n\mid n<\omega\rangle$ of closed sets that vanishes (i.e., $\bigcap_{n<\omega}D_n=\emptyset$),
there exists a $\s$-decreasing vanishing sequence $\langle U_n\mid n<\omega\rangle$ of open sets such that $U_n\supseteq D_n$ for every $n<\omega$.

As $\{ a_\epsilon\mid\epsilon\in E\}$ is a $\s^2$-antichain in $T^2$, Proposition~\ref{prop26} implies that
for every $n<\omega$, $D_n:=\{ a_\epsilon\mid \epsilon\in E\ \&\allowbreak\ g(\epsilon)\ge n\}$ is a closed (discrete) subset of $(X_T)^2$.
Clearly, $\langle D_n\mid n<\omega\rangle$ is decreasing and vanishing.
Towards a contradiction, suppose that $\langle U_n\mid n<\omega\rangle$ is a vanishing decreasing sequence of open subsets of $(X_T)^2$ such that $U_n\supseteq D_n$ for every $n<\omega$.
Define a function $f:T^2\rightarrow\omega$ by letting $f(x_0,x_1)$ be the least $n<\omega$ such that $(x_0,x_1)\notin U_n$. Now using Claim~\ref{claim285} pick $\alpha\in E$ such that
$$\sup\{\tau<\alpha\mid f(\mathbf b_{\langle1\rangle}^\alpha\restriction\tau,\mathbf b_{\langle2\rangle}^\alpha\restriction\tau)\le g(\alpha)\}=\alpha.$$

Consider $n:=g(\alpha)$, so that $(\mathbf b_{\langle1\rangle}^\alpha,\mathbf b_{\langle2\rangle}^\alpha)=a_\alpha\in D_n\s U_n$. Since $U_n$ is open in $(X_T)^2$,
we may fix $y_1\subsetneq\mathbf b_{\langle1\rangle}^\alpha$ and $y_2\subsetneq\mathbf b_{\langle2\rangle}^\alpha$ such that $(y_1,\mathbf b_{\langle1\rangle}^\alpha]\times(y_2,\mathbf b_{\langle2\rangle}^\alpha]\s U_n$.
Fix a large enough $\tau<\alpha$ such that $\max\{\dom(y_1),\dom(y_2)\}<\tau$ and
$f(\mathbf b_{\langle1\rangle}^\alpha\restriction\tau,\mathbf b_{\langle2\rangle}^\alpha\restriction\tau)\le n$.

It follows that $(\mathbf b_{\langle1\rangle}^\alpha\restriction\tau,\mathbf b_{\langle2\rangle}^\alpha\restriction\tau)\in(y_1,\mathbf b_{\langle1\rangle}^\alpha]\times(y_2,\mathbf b_{\langle2\rangle}^\alpha]\s T^2 \cap U_n$,
which must mean that $f(\mathbf b_{\langle1\rangle}^\alpha\restriction\tau,\mathbf b_{\langle2\rangle}^\alpha\restriction\tau)>n$.
This is a contradiction.
\end{proof}
This completes the proof.
\end{proof}
\begin{remark}\label{1181}
\begin{enumerate}
\item By Claim~\ref{2198} and \cite[Theorem~2.1]{MR686092}, $X_T$ is hereditarily collectionwise normal.
\item In \cite{MR1261168}, Be\v{s}lagi\'{c} constructed from $\diamondsuit(\omega_1)$ a perfectly normal space $X$ such that $X^2$ is not cmc
but is normal. In contrast, our tree-based example witnesses the non-productivity of normality.
This is because the proof of Claim~\ref{21910} moreover shows that the closed subspace $T^2$ of $(X_T)^2$ is not cmc.
So, by \cite[Corollary~4.2]{MR577767}, the closed subspace $T^2$ cannot be normal, let alone $(X_T)^2$.
\item An inspection of the preceding construction makes it clear that for the sake of getting an $\mathbb R$-embeddable $\aleph_1$-tree $T$ such that $T^2$ is not cmc,
$\diamondsuit(\omega_1)$ suffices. As $(T^2,\s^2)$ would be an $\mathbb R$-embeddable $\aleph_1$-tree in this case, this shows that $\diamondsuit(\omega_1)$ yields an $\mathbb R$-embeddable $\aleph_1$-tree that is not cmc,
a result announced by Hanazawa on \cite[p.~61]{MR679075}.\footnote{The first consistent example of an $\mathbb R$-embeddable $\aleph_1$-tree that is not special was given by Baumgartner in his dissertation \cite{MR2620340}.}
\item A minor tweak of the preceding construction will secure the following anti-uniformization feature, a strong form of Claim~\ref{claim285}.
There exist an $\omega$-bounded ladder system $\langle A_\alpha\mid \alpha<\omega_1\rangle$ and a function $g:T^2\rightarrow\omega$ such that
for every function $f:T^2\rightarrow\omega$ there are stationarily many $\alpha\in E$ such that for every $(z_0,z_1)\in T_\alpha\times T_\alpha$,
$$\sup\{\tau\in A_\alpha\mid f(z_0\restriction\tau,z_1\restriction\tau)\le g(z_0,z_1)\}=\alpha.$$
Such an almost-Souslin tree can be viewed as an instance of diamond whose square is an instance of uniformization.
\item The preceding construction can also be tweaked to obtain, for every positive integer $k$, an $\aleph_1$-tree $T\s{}^{<\omega_1}\omega$
consisting of a finite-to-one maps (in particular, $T$ is $\mathbb R$-embeddable)
such that $(T^k,{\s^k})$ is almost-Souslin and $T^{k+1}$ is not cmc.
\item It is an open problem whether every $\mathbb R$-embeddable cmc $\aleph_1$-tree is perfect.
An affirmative answer holds assuming $\ma$ \cite[Corollary~23B/41J]{fremlin1984consequences} or $\VisL$ or $\pmea$ \cite[p.~420]{MR1244382},
but the general case is open. A purported counterexample from $\diamondsuit^*(\omega_1)$ was constructed in \cite[Theorem~4]{MR679075},
but it is flawed in view of the fact that $\mathsf V=\mathsf L$ implies $\diamondsuit^*(\omega_1)$.
The error was acknowledged in \cite[p.~20]{MR769446}.
\end{enumerate}
\end{remark}

\subsection{Additional set-theoretic comments}\label{subsection:additional-set-theory}
The topologist who has had enough set theory can stop reading here.
For the interested reader, we mention that the ladder system in the conclusion of Fact~\ref{proxystar} is a particular instance of the proxy principles $\p^-(\ldots)$ from \cite{paper65},
namely, $\p^-_\omega(\omega_1,2,{\sq},1,\allowbreak\ns^+_{\omega_1},2,1)$.
The existence of such a ladder system does not require $\ch$, let alone $\diamondsuit^*(\omega_1)$,
as it is for instance introduced by the forcing to add a single Cohen real (see \cite[Claim~2.3.3]{rinot12}).

Even the conjunction $\p^-_\omega(\omega_1,2,{\sq},1,\ns^+_{\omega_1},2,1) \land \diamondsuit(\omega_1)$,
which is denoted by $\p_\omega(\omega_1,2,{\sq},1,\ns^+_{\omega_1},2,1)$ according to \cite[Definition~1.6]{paper22},
is strictly weaker than $\diamondsuit^*(\omega_1)$, as can be seen from the proof of Corollary~\ref{cor21} below.
Meanwhile, since the application of $\diamondsuit^*(\omega_1)$ in the proof of Theorem~\ref{thm211} is limited to Facts \ref{proxystar} and \ref{diamond_hw1},
we arrive at the following conclusion.
\begin{thm}\label{thmA-from-proxy}
$\p_\omega(\omega_1,2,{\sq},1,\ns^+_{\omega_1},2,1)$ implies the existence of
an $\mathbb R$-embeddable almost-Souslin $\aleph_1$-tree $T$ such that $X_T$ is perfectly normal,
but $(X_T)^2$ is not cmc.
\qed
\end{thm}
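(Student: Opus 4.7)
The plan is to observe that the proof of Theorem~\ref{thm211} uses the hypothesis $\diamondsuit^*(\omega_1)$ only through two specific invocations: Fact~\ref{proxystar} (which supplies the ladder system $\vec C = \langle C_\alpha\mid\alpha<\omega_1\rangle$) and Fact~\ref{diamond_hw1} (which supplies the partition $\langle R_i\mid i<\omega_1\rangle$ together with the predicting sequence $\langle\Omega_\beta\mid\beta<\omega_1\rangle$). So it suffices to derive both ingredients from $\p_\omega(\omega_1,2,{\sq},1,\ns^+_{\omega_1},2,1)$ and then cite the construction of Theorem~\ref{thm211} verbatim.

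First I would unpack the hypothesis. According to \cite[Definition~1.6]{paper22}, the absence of the minus sign on a proxy principle at a cardinal $\kappa$ means that, in addition to the minus-version, $\diamondsuit(\kappa)$ holds. Hence the standing hypothesis decomposes as $\p^-_\omega(\omega_1,2,{\sq},1,\ns^+_{\omega_1},2,1) + \diamondsuit(\omega_1)$. The second conjunct is precisely the form of $\diamondsuit$ needed to invoke Fact~\ref{diamond_hw1}, and it also provides $\ch$, which is used in the opening lines of the proof of Theorem~\ref{thm211} to fix a bijection $\phi:\omega_1\leftrightarrow H_{\omega_1}$.

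Second, I would point to the remark immediately preceding the theorem statement: the ladder system produced by Fact~\ref{proxystar} is literally an instance of $\p^-_\omega(\omega_1,2,{\sq},1,\ns^+_{\omega_1},2,1)$. Thus any witness $\vec C$ to the minus-version supplies a sequence $\langle C_\alpha\mid\alpha<\omega_1\rangle$ of order-type-$\omega$ cofinal subsets such that for every uncountable $B\subseteq\omega_1$, there are club many $\alpha$ with $\sup(C_\alpha\cap B)=\alpha$, which is exactly the conclusion of Fact~\ref{proxystar}. Without loss of generality one may assume $\min(C_\alpha)=1$ for every $\alpha\in\Lambda$, as in the proof of Theorem~\ref{thm211}.

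With both Facts \ref{proxystar} and \ref{diamond_hw1} available under the weaker hypothesis, I would now reproduce the argument of Theorem~\ref{thm211} unchanged: construct the sequences $\langle T_\alpha\mid\alpha<\omega_1\rangle$, $E\subseteq\omega_1$, and $\langle a_\epsilon\mid\epsilon\in E\rangle$ by recursion as before; verify the coordination lemma and Claims \ref{claim2148}, \ref{2198}, \ref{claim285}, and \ref{21910} word for word. The only conceptual obstacle is the bookkeeping audit — one must confirm that $\diamondsuit^*(\omega_1)$ enters nowhere else (e.g., implicitly in the elementarity arguments inside Subclaim~\ref{applicationoftwofacts} or in Claim~\ref{2198}). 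Inspection shows each such appeal routes through Fact~\ref{diamond_hw1} or through the ladder system, so no further hypothesis is used. This yields the desired $\mathbb R$-embeddable almost-Souslin $\aleph_1$-tree $T$ with $X_T$ perfectly normal and $(X_T)^2$ not cmc, completing the proof. \qed
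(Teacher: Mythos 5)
Your proposal is correct and is essentially the paper's own argument: the paper proves Theorem~\ref{thmA-from-proxy} by exactly this observation, namely that the hypothesis decomposes (via \cite[Definition~1.6]{paper22}) into $\p^-_\omega(\omega_1,2,{\sq},1,\ns^+_{\omega_1},2,1)$ plus $\diamondsuit(\omega_1)$, which supply the conclusions of Facts~\ref{proxystar} and~\ref{diamond_hw1} respectively, and that these are the only uses of $\diamondsuit^*(\omega_1)$ in the proof of Theorem~\ref{thm211}. Your added bookkeeping remarks (e.g.\ that $\diamondsuit(\omega_1)$ yields $\ch$ for the bijection $\phi$, and the harmless normalization $\min(C_\alpha)=1$) are accurate and consistent with the paper.
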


\begin{cor}\label{cor21} Suppose that $\diamondsuit(\omega_1)$ holds.
If $\mathbb P$ is the notion of forcing to add a single Cohen real
or any other ccc notion of forcing of size at most $\aleph_1$ that is not ${}^\omega\omega$-bounding,
then in $V^{\mathbb P}$ there is an $\mathbb R$-embeddable almost-Souslin $\aleph_1$-tree $T$ such that $X_T$ is perfectly normal, but $(X_T)^2$ is not cmc.
\end{cor}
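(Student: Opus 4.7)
The plan is to deduce the corollary directly from Theorem~\ref{thmA-from-proxy} by verifying that $\p_\omega(\omega_1,2,{\sq},1,\ns^+_{\omega_1},2,1)$ holds in $V^{\mathbb P}$. As recalled just before that theorem, this principle is the conjunction of $\p^-_\omega(\omega_1,2,{\sq},1,\ns^+_{\omega_1},2,1)$ with $\diamondsuit(\omega_1)$, so the task splits into two: showing that $\diamondsuit(\omega_1)$ is preserved by $\mathbb P$, and showing that $\p^-_\omega(\omega_1,2,{\sq},1,\ns^+_{\omega_1},2,1)$ holds in $V^{\mathbb P}$.

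For the first point, I would invoke the classical preservation fact that any ccc forcing of size at most $\aleph_1$ preserves $\diamondsuit(\omega_1)$: given a $\mathbb P$-name $\dot A$ for a subset of $\omega_1$, the ccc and $|\mathbb P|\le\aleph_1$ allow $\dot A$ to be coded, via a fixed enumeration of $\mathbb P$ of length $\omega_1$ together with the countably many maximal antichains determining $\dot A\cap\alpha$ for each $\alpha$, as a single subset of $\omega_1$ inside $V$; the ground-model $\diamondsuit$-sequence then predicts these codes, and reinterpreting the guesses in $V[G]$ assembles a $\diamondsuit(\omega_1)$-sequence there. For the second point, the witness to $\p^-_\omega(\omega_1,2,{\sq},1,\ns^+_{\omega_1},2,1)$ is precisely the ladder system produced from the Cohen-generic real in \cite[Claim~2.3.3]{rinot12}, as explicitly noted in the discussion preceding the corollary. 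The only feature of the Cohen real used in that construction is that it is not eventually dominated by any ground-model real, i.e., that the forcing fails to be ${}^\omega\omega$-bounding; the ccc and $|\mathbb P|\le\aleph_1$ in turn ensure that $\omega_1$ is preserved and that the combinatorial recursion goes through. Hence applying the same construction to any real in $V^{\mathbb P}$ unbounded over $V\cap{}^\omega\omega$ yields the required witness.

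With both conjuncts verified in $V^{\mathbb P}$, Theorem~\ref{thmA-from-proxy} applied inside $V^{\mathbb P}$ immediately delivers the tree. The main obstacle is the second point in full generality: one must check that the argument of \cite[Claim~2.3.3]{rinot12}, as written for Cohen forcing, genuinely depends only on non-${}^\omega\omega$-boundedness rather than on some stronger feature of Cohen genericity. This amounts to a careful reading of that proof to isolate the minimal property of the generic real that it invokes, after which the generalization to arbitrary $\mathbb P$ as in the hypotheses follows mechanically.
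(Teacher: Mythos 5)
Your overall strategy coincides with the paper's: reduce to Theorem~\ref{thmA-from-proxy} by checking, in $V^{\mathbb P}$, both $\diamondsuit(\omega_1)$ and the ladder-system principle $\p^-_\omega(\omega_1,2,{\sq},1,\ns^+_{\omega_1},2,1)$. Your first conjunct is fine: preservation of $\diamondsuit(\omega_1)$ under ccc forcing of size at most $\aleph_1$ is the standard coding argument you sketch (the paper simply cites \cite[Proposition~3.1(1)]{paper26} for this). The problem is the second conjunct, which is the actual content of the corollary, and there your argument has a genuine gap: you cite \cite[Claim~2.3.3]{rinot12}, which treats only the single Cohen real, and then \emph{assert} --- explicitly flagging it as unchecked --- that its proof uses nothing beyond the failure of ${}^\omega\omega$-bounding, so that the generalization to an arbitrary ccc non-bounding poset of size at most $\aleph_1$ ``follows mechanically.'' That assertion is not justified and is doubtful as stated. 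The guessing required by the principle must apply to \emph{every} uncountable $B\s\omega_1$ of the extension, including sets added by $\mathbb P$ itself. The Cohen-real argument exploits the countability of the poset (e.g., a pigeonhole over countably many conditions reduces a name for an uncountable subset of $\omega_1$ to ground-model uncountable sets, and genuine density arguments over the countable poset do the rest); for a ccc poset of size $\aleph_1$ these reductions fail, and handling arbitrary nice names is precisely where the hypotheses ``ccc, size at most $\aleph_1$, not ${}^\omega\omega$-bounding'' (and, in the paper's route, the ground-model hypothesis) have to be put to work. So the step you defer to ``a careful reading'' is not a routine verification but the heart of the matter.

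The paper closes this gap by quoting a dedicated published result rather than generalizing the Cohen-specific claim by hand: by \cite[Theorem~3.4]{paper26}, any ccc non-${}^\omega\omega$-bounding forcing of size at most $\aleph_1$ (over a suitable ground model, which the hypothesis $\diamondsuit(\omega_1)$ provides) forces $\p^*(E^{\omega_1}_\omega,\omega)$, and then by \cite[Theorem~4.35(2)]{paper23} together with the remark after \cite[Definition~3.3]{paper26} one has
$\p^*(E^{\omega_1}_\omega,\omega)\iff\p^-_\omega(\omega_1,2,{\sq},1,\ns^+_{\omega_1},2,{<}\infty)\implies\p^-_\omega(\omega_1,2,{\sq},1,\ns^+_{\omega_1},2,1)$,
which combined with the preserved $\diamondsuit(\omega_1)$ yields $\p_\omega(\omega_1,2,{\sq},1,\ns^+_{\omega_1},2,1)$ in $V^{\mathbb P}$, whence Theorem~\ref{thmA-from-proxy} applies. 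To repair your write-up, either invoke \cite[Theorem~3.4]{paper26} (and the translation above) as the paper does, or actually carry out the construction of the ladder system from an unbounded real for a general ccc poset of size $\le\aleph_1$, which requires an argument about names, not just a rereading of the Cohen case.
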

\begin{proof} Given $\mathbb P$ as in the hypothesis, since we have assumed $\diamondsuit(\omega_1)$ in $V$,
by \cite[Proposition~3.1(1) and Theorem~3.4]{paper26}, both $\p^*(E^{\omega_1}_\omega,\omega)$ and $\diamondsuit(\omega_1)$ hold in $V^{\mathbb P}$.\footnote{See also the third section of~\cite[Table~3.2]{paper65}.}
By Remark~(iv) after \cite[Definition~3.3]{paper26}, $\p^*(E^{\omega_1}_\omega,\omega)$ stands for $\p^-_\omega(\omega_1,\infty,{\sq},1,\ns^+_{\omega_1},2,{<}\infty)$,
which is easily seen to imply $\p^-_\omega(\omega_1,\allowbreak 2,{\sq},1,\allowbreak\ns^+_{\omega_1},2,1)$.
Altogether, $\p_\omega(\omega_1,2,{\sq},1,\allowbreak\ns^+_{\omega_1},2,1)$ holds in $V^{\mathbb P}$,
and we can appeal to Theorem~\ref{thmA-from-proxy}.\footnote{To demonstrate that the hypothesis of Theorem~\ref{thmA-from-proxy}
is consistently weaker than that of Theorem~\ref{thm211},
note that if $\diamondsuit^*(\omega_1)$ failed in $V$, then it remains failing in $V^{\mathbb P}$. Indeed, by \cite[Lemma~VII.5.5 and Exercise~VII.H1]{MR597342},
a ccc notion of forcing cannot force $\diamondsuit^*(\omega_1)$.}
\end{proof}

An argument of Todor{\v{c}}evi{\'c} \cite[p.~44]{MR2213652} building on \cite[Theorem~1.3]{MR535835} shows that an extra hypothesis concerning the ground model is indeed necessary here.
For instance, it shows that in the forcing extension by adding a single Cohen or random real over a model of $\ma$,
for every $\mathbb R$-embeddable $\aleph_1$-tree $T$, the space $(X_T)^2$ is cmc (in fact, it is a $Q$-space).

\section{Clause~(1) of Theorem~\ref{thmb}}\label{section:special}
Throughout this section, $\kappa$ stands for a regular uncountable cardinal,
$\reg(\kappa)$ denotes the collection of all infinite regular cardinals below $\kappa$,
and for $\sigma\in\reg(\kappa)$, we let $E^\kappa_\sigma:=\{\alpha < \kappa \mid \cf(\alpha) = \sigma\}$.
For a set of ordinals $A$ we write $\acc(A) := \{\alpha \in A \mid \sup(A \cap \alpha) = \alpha > 0\}$ and $\nacc(A) := A \setminus \acc(A)$.
\begin{defn} A \emph{$C$-sequence over $\kappa$} is a sequence $\vec C=\langle C_\alpha\mid\alpha<\kappa\rangle$
such that, for every $\alpha<\kappa$, $C_\alpha$ is a closed subset of $\alpha$ with $\sup(C_\alpha)=\sup(\alpha)$.
\end{defn}

\begin{defn}[{\cite[\S4]{paper71}}] Let $\vec C=\langle C_\alpha\mid\alpha<\kappa\rangle$ be a $C$-sequence.
\begin{itemize}
\item The set of \emph{lower-regressive levels} of $\vec{C}$ is the following:
$$R(\vec{C}):=\{\alpha\in\acc(\kappa)\mid \forall\beta\in(\alpha,\kappa)\,(\otp(C_\beta\cap\alpha)<\alpha)\}.$$
\item The set of \emph{avoiding levels} of $\vec{C}$ is the following:
$$A(\vec{C}):=\{\alpha\in\acc(\kappa)\mid \forall\beta\in(\alpha,\kappa)\,(\sup(C_\beta\cap\alpha)<\alpha)\}.$$
\end{itemize}
\end{defn}

As explained in \cite{paper65}, constructions of $\kappa$-trees are typically guided by some `good' $C$-sequences
such as those whose existence is asserted by \emph{proxy principles}. The general definition of these principles may be found as \cite[Definition~2.5, \S2.4 and \S2.6]{paper65},
but for our purpose, the following special case suffices.\footnote{Compare the upcoming definition with the conclusion of Fact~\ref{proxystar}.}

\begin{defn}\label{def32}
$\p_{<}(\kappa,2,{\sq},\kappa)$ asserts $\diamondsuit(\kappa)$ as well as the existence of a $C$-sequence $\vec C=\langle C_\alpha\mid\alpha<\kappa\rangle$ such that all of the following hold:
\begin{enumerate}[label=\textup{(\arabic*)}]
\item\label{321} $R(\vec{C})$ covers a club;
\item\label{322} for all $\alpha<\kappa$ and all $\beta\in\acc(C_\alpha)$, $C_\beta=C_\alpha\cap\beta$;
\item\label{323} for every sequence $\langle B_i\mid i<\kappa\rangle$ of cofinal subsets of $\kappa$,
there are stationarily many $\alpha<\kappa$ such that, for every $i<\alpha$,
$$\sup(\nacc(C_\alpha)\cap B_i)=\alpha.$$
\end{enumerate}
\end{defn}
\begin{remark}\label{rmk33} To compare, the proxy principle $\p_\xi(\kappa,2,{\sq},\kappa)$ is the outcome of replacing Clause~\ref{321} by the uniform requirement that $\otp(C_\alpha)\le\xi$ for all $\alpha<\kappa$.
For a successor cardinal $\kappa$, having $\p_\xi(\kappa,2,{\sq},\kappa)$ with $\xi<\kappa$ is consistent \cite[Theorem~3.6]{paper22} and quite useful \cite[Main Theorem]{MR4833803},
but what do we do at inaccessibles?
A natural generalization would require that $\otp(C_\alpha)<\alpha$ for club many $\alpha<\kappa$. However, for an inaccessible cardinal $\kappa$, every $\vec C$ satisfying \ref{322} and \ref{323}
must satisfy that $\{\alpha\in E^\kappa_\omega\mid \otp(C_\alpha)=\alpha\}$ is stationary in $\kappa$. Thus, Clause~\ref{321} turns out to be the right way to go.
\end{remark}

We now introduce a poset $\mathbb P$ that forces $\p_{<}(\kappa,2,{\sq},\kappa)$. In the next section we shall demonstrate the utility of this axiom.

\begin{defn}\label{def01}
Let $\mathbb P$ be the forcing notion in which each condition is either $\emptyset$ or a pair $p = (\langle C^p_\alpha \mid \alpha \le \gamma^p \rangle,D^p)$ such that:
\begin{enumerate}
\item $\gamma^p\in\acc(\kappa)$;
\item $D^p$ is a closed set of limit ordinals with $\max(D^p)=\gamma^p$;
\item for all $\alpha \le \gamma^p$, $C^p_\alpha$ is a closed subset of $\alpha$ with $\sup(C_\alpha^p)=\sup(\alpha)$;
\item for all $\alpha < \beta \le \gamma^p$:
\begin{enumerate}
\item if $\alpha \in \acc(C^p_\beta)$, then $C^p_\beta \cap \alpha = C^p_\alpha$;
\item if $\alpha\in D^p$, then $\otp(C^p_\beta\cap\alpha)<\alpha$.
\end{enumerate}
\end{enumerate}

We order $\mathbb P$ by assigning $\emptyset$ as the maximal element $\one_{\mathbb P}$, and otherwise requiring end-extension on both coordinates.
\end{defn}

Before we analyze the features of the poset $\mathbb P$, we recall the notion of strategic closure.

\begin{defn}\label{thegame} For a notion of forcing $\mathbb P$ and an ordinal $\sigma$,
$\Game_\sigma(\mathbb P)$ denotes the following two-player game of perfect information:

Two players, named {\bf I} and {\bf II}, take turns to play conditions from $\mathbb P$ for $\sigma$ many moves, with
{\bf I} playing at odd stages and {\bf II} at even stages (including all limit stages).
{\bf II} must play $\one_{\mathbb P}$ at move zero.
Let $p_i$ be the condition played at move $i$;
the player who plays $p_i$ loses immediately unless $p_i\le p_j$ for all $j<i$.
If neither player loses at any stage $i<\sigma$, then {\bf II} wins.

$\mathbb P$ is said to be \emph{$\sigma$-strategically-closed} iff {\bf II} has a winning strategy for $\Game_\sigma(\mathbb P)$.
It is said to be \emph{${<}\sigma$-strategically-closed} iff it is $\tau$-strategically-closed for all $\tau<\sigma$.
\end{defn}

\begin{lemma}\label{lemma02}
\begin{enumerate}[label=\textup{(\arabic*)}]
\item\label{c1} for every nonempty $p\in\mathbb P$, for every $\beta<\kappa$, there exists a $q\le p$ with $\gamma^q>\beta$ and $\otp(C^q_{\gamma^q})=\omega$.
\item\label{c2} for every $\mathbb P$-name $\dot B$ for a cofinal subset of $\kappa$,
for every $\sigma\in\acc(\kappa)$,
for every nonempty $p\in\mathbb P$ such that $\otp(C^p_{\gamma^p})<\gamma^p$,
there exists a $q\le p$ such that all of the following hold:
\begin{itemize}
\item $C^p_{\gamma^p}\sq C^{q}_{\gamma^{q}}$;
\item $\otp(C^{q}_{\gamma^{q}})=\otp(C^p_{\gamma^p})+\omega\cdot\sigma<\gamma^{q}$;
\item $q$ forces that $\{ C^q_{\gamma^q}(\otp(C^p_{\gamma^p})+\omega\cdot \iota+1)\mid \iota<\sigma\}$ is a subset of $\dot B$.
\end{itemize}
\item\label{c3} $\mathbb P$ is ${<}\kappa$-strategically-closed.
\item\label{c4} $\mathbb P$ is countably closed.
\item\label{c5} $\mathbb P$ has size $\kappa^{<\kappa}$.
\end{enumerate}
\end{lemma}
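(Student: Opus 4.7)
The plan is to handle the five clauses in natural order: cardinality for (5), direct union for (4), a construction for (1), then the substantive strategic closure argument (3), with (2) built on top of the strategy.

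For clause~(5), each condition is coded by data of size ${<}\kappa$, so under $\kappa^{<\kappa}=\kappa$ we get $|\mathbb P|\le\kappa$ and hence the $\kappa^+$-cc. For clause~(4), given a $\le$-decreasing $\langle p_n\mid n<\omega\rangle$, either the $\gamma^{p_n}$ stabilize (trivial) or they strictly increase to $\gamma:=\sup_n\gamma^{p_n}$ with $\cf(\gamma)=\omega$; in the latter case, take $\gamma^q:=\gamma$, $D^q:=\bigcup_n D^{p_n}\cup\{\gamma\}$, $C^q_\alpha:=C^{p_n}_\alpha$ for $\alpha\le\gamma^{p_n}$ (well-defined by coherence~(4a) of the $p_n$'s), and $C^q_\gamma:=\{\gamma^{p_n}\mid n\ge n_0\}$ for $n_0$ large enough that the sequence is strictly increasing. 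This is a discrete cofinal $\omega$-sequence, so $\acc(C^q_\gamma)=\emptyset$ and (4a) is vacuous at the top; condition (4b) at $\alpha\in\bigcup_n D^{p_n}$ against $\beta=\gamma$ holds because tail elements of $C^q_\gamma$ exceed $\alpha$, leaving only finitely many below. Clause~(1) reduces to picking $\delta>\max(\gamma^p,\beta)$ of cofinality $\omega$ and filling in the interval $(\gamma^p,\delta]$: successors get singleton $C$-sequences, limits of countable cofinality get discrete cofinal sequences, and—when $\beta$ is large enough to force intermediate limits of higher cofinality—one handles those via iterated application of (4), building up $\gamma^p$ step-by-step through a descending $\omega$-chain and taking countable limits.

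The heart of the lemma is clause~(3). The plan is to define a winning strategy $\Sigma$ for Player~II that, at each even move $i$, plays $p_i$ extending Player~I's last move with a controlled shape: $\gamma^{p_i}>\gamma^{p_{i-1}}$ is chosen so that $\gamma^{p_{i-2}}$ (and, at limit moves, all $\gamma^{p_j}$ for $j<i$) becomes an element of $\nacc(C^{p_i}_{\gamma^{p_i}})$, with the $C^{p_i}_{\gamma^{p_i}}$'s forming a $\sq$-increasing chain indexed by the even moves. At a limit move $j<\sigma$, Player~II plays $\gamma^{p_j}:=\sup_{k<j}\gamma^{p_k}$ (well below $\kappa$ by regularity), $D^{p_j}:=\bigcup_{k<j}D^{p_k}\cup\{\gamma^{p_j}\}$, $C^{p_j}_\alpha$ for $\alpha<\gamma^{p_j}$ by coherent union, and $C^{p_j}_{\gamma^{p_j}}:=\bigcup_{\text{even }k<j}C^{p_k}_{\gamma^{p_k}}$. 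The chain invariant guarantees that $\acc(C^{p_j}_{\gamma^{p_j}})$ consists exactly of certain $\gamma^{p_k}$'s, and at each such $\gamma^{p_k}$ the required coherence $C^{p_j}_{\gamma^{p_j}}\cap\gamma^{p_k}=C^{p_k}_{\gamma^{p_k}}$ is built in by the chain condition. Clause~(4b) for new $\alpha=\gamma^{p_j}$ is not needed ($\alpha<\beta$ is required), while for old $\alpha\in\bigcup D^{p_k}$ it follows from the tail structure. The main technical obstacle is this limit bookkeeping—ensuring that Player~II's even-move extensions really can be made to slot $\gamma^{p_{i-1}}$ into $C^{p_i}_{\gamma^{p_i}}$ while keeping Player~I's data intact; the countable closure argument (clause~(4)) and the condition $\otp(C^{p_{i-1}}_{\gamma^{p_{i-1}}})<\gamma^{p_{i-1}}$ provide the needed room.

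Clause~(2) is an application of the strategy. Given $\dot B$, $\sigma\in\acc(\kappa)$, and $p$ with $\otp(C^p_{\gamma^p})<\gamma^p$, build a descending sequence $\langle q_\iota\mid\iota\le\sigma\rangle$ inside Player~II's winning play: at each successor stage corresponding to odd index $\omega\iota+1$ in the target order type, extend by a density argument—since $\dot B$ is forced cofinal, find $r\le q_\iota$ and $\mu>\gamma^{q_\iota}$ with $r\Vdash\check\mu\in\dot B$, then extend $r$ further so that the element added at position $\omega\iota+1$ of the new $C^{\cdot}_{\gamma^{\cdot}}$ is precisely $\mu$; at even positions pad using clause~(1)-type moves; at limit $\iota\le\sigma$ apply $\Sigma$. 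The final $q:=q_\sigma$ satisfies $C^p_{\gamma^p}\sq C^q_{\gamma^q}$ with $\otp(C^q_{\gamma^q})=\otp(C^p_{\gamma^p})+\omega\cdot\sigma$, and since $\otp(C^p_{\gamma^p})<\gamma^p\le\gamma^q$ and $\sigma<\kappa$, ordinal arithmetic gives $\otp(C^q_{\gamma^q})<\gamma^q$, as required.
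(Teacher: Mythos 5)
Your overall architecture ((1) by direct extension, (3) by a strategy for {\bf II} that keeps its own top clubs in a $\sq$-increasing chain and takes unions at limits, (2) by playing against that strategy while deciding elements of $\dot B$, (4) and (5) as routine) matches the paper, but two steps as written do not go through. First, clause~(1): any $q\le p$ with $\gamma^q>\beta$ must define $C^q_\alpha$ at \emph{every} $\alpha\in(\gamma^p,\gamma^q]$, including limit ordinals of uncountable cofinality once $\beta$ is large. At such an $\alpha$ the set $\acc(C^q_\alpha)$ is cofinal in $\alpha$, so coherence (4)(a) genuinely constrains the lower levels; your assignment (singletons at successors, arbitrary discrete $\omega$-sequences at countable-cofinality limits) will not cohere with any club you could put at $\alpha$. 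Your fallback --- ``iterated application of (4), building up step-by-step through a descending $\omega$-chain and taking countable limits'' --- cannot repair this: each single step of such an iteration is itself an instance of clause~(1) (so the argument is circular), and a countable chain of tops below, say, $\omega_1$ has supremum below $\omega_1$, so no amount of countable closure lets you cross a level of uncountable cofinality. The paper avoids all of this with a uniform choice: every intermediate level gets the tail $C^q_\alpha:=\alpha\setminus\gamma^p$ (these cohere with one another automatically and meet no $\alpha'\in D^p$), and the top gets a tail of order type $\omega$; some such uniform, mutually coherent choice is needed.

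Second, in clauses (2)/(3) the order-type bookkeeping is where the real work lies, and your treatment of it has a gap. The final inequality $\otp(C^q_{\gamma^q})=\otp(C^p_{\gamma^p})+\omega\cdot\sigma<\gamma^q$ does \emph{not} follow ``by ordinal arithmetic'' from $\otp(C^p_{\gamma^p})<\gamma^p\le\gamma^q$: ordinal addition is only weakly monotone in the left argument, and if the witnesses $\mu$ are chosen minimally one can perfectly well end up with $\gamma^q$ equal to $\otp(C^p_{\gamma^p})+\omega\cdot\sigma$. Moreover, the same invariant (order type of {\bf II}'s top club strictly below its top) is exactly what makes {\bf II}'s limit moves legitimate conditions, since each earlier top $\gamma^{p_i}$ lies in $D$ and clause (4)(b) there amounts to $\otp(C^{p_i}_{\gamma^{p_i}})<\gamma^{p_i}$; you cannot extract this from Player {\bf I}'s moves (nothing forces {\bf I} to play conditions with small top-club order type; you should also not appeal to $\otp(C^{p_{i-1}}_{\gamma^{p_{i-1}}})<\gamma^{p_{i-1}}$ for {\bf I}'s $p_{i-1}$), so {\bf II} must enforce it deliberately --- in the paper, by having its first substantive move jump above $\gamma^{p_2}+\omega\cdot\sigma$, after which $\otp(C^{p_j}_{\gamma^{p_j}})\le\gamma^{p_2}+\omega\cdot j<\gamma^{p_4}\le\gamma^{p_j}$. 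A cosmetic but telling slip in the same part: with a $\sq$-chain of top clubs, the previous top $\gamma^{p_{i-2}}$ necessarily lands in $\acc$ (not $\nacc$) of the later clubs, which is fine because the chain gives the required coherence there, but the elements placed in $\nacc$ should be the newly added points (the decided $\mu$'s and the padding block), and it is precisely their positions $\otp(C^p_{\gamma^p})+\omega\cdot\iota+1$ that the third bullet of clause~(2) is about.
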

\begin{proof} (1) Given a nonempty $p\in\mathbb P$ and $\beta<\kappa$, define $q\le p$ as follows:
\begin{itemize}
\item $\gamma^{q}:=\gamma^{p}+\beta+\omega$;
\item $C_\alpha^{q}:=C_\alpha^{p}$ for every $\alpha\le\gamma^{p}$;
\item $C_\alpha^{q}:=\alpha\setminus\gamma^{p}$ whenever $\gamma^{p}<\alpha<\gamma^{q}$;
\item $C_{\gamma^q}^{q} := \gamma^q\setminus(\gamma^p+\beta)$, a tail of $\gamma^q$ of order-type $\omega$;
\item $D^{q}:=D^{p}\cup \{\gamma^{q}\}$.
\end{itemize}
It is clear that $q$ is as sought.

Next, we prove Clauses \ref{c2} and \ref{c3} simultaneously.
So, let $\dot B$ be a $\mathbb P$-name for a cofinal subset of $\kappa$, let $\sigma\in\acc(\kappa)$,
and we shall play the game $\Game_{\sigma+1}(\mathbb P)$, producing a decreasing sequence of conditions $\langle p_i\mid i\le\sigma\rangle$,
where $p_0=\one_{\mathbb P}$ and $p_1$ is an arbitrary condition (as dictated by the rules of the game),
and $p_2$ is any extension of $p_1$ satisfying $\otp(C^{p_2}_{\gamma^{p_2}})<\gamma^{p_2}$ (such an extension exists, by Clause~\ref{c1}),
so $p_2$ plays the role of $p$ from Clause~\ref{c2}.

While describing the winning strategy for {\bf II} in $\Game_{\sigma+1}(\mathbb P)$,
we shall be producing an auxiliary sequence $\langle \beta_i\mid \text{even }i\in[4,\sigma)\rangle$
of ordinals, and the strategy for {\bf II} will ensure the following features:
\begin{enumerate}[label=(\roman*)]
\item\label{gamei} $\otp(C^{p_2}_{\gamma^{p_2}})<\gamma^{p_2}$;
\item\label{gameii} $\gamma^{p_4}>\gamma^{p_2}+\omega\cdot\sigma$;
\item\label{gameiii} for every even $j\in[4,\sigma]$,
$$\acc(C_{\gamma^{p_j}}^{p_j})=\acc(C_{\gamma^{p_2}}^{p_2})\cup\{ \gamma^{p_i}\mid \text{even }i\in[2,j)\};$$

\item\label{gameiv} for every even $i\in[2,\sigma)$, the ordinal $\beta_{i+2}$ will be equal to $\min(C_{\gamma^{p_{i+2}}}^{p_{i+2}}\setminus (\gamma^{p_{i}}+1))$
and it will be the case that $p_{i+2}\forces \beta_{i+2}\in \dot B$.
\end{enumerate}

Note that the combination of Clauses \ref{gamei}--\ref{gameiii} implies the following:
\begin{enumerate}[label=(\roman*),resume]
\item\label{gamev} for every even $i\in[2,\sigma]$, $\otp(C_{\gamma^{p_i}}^{p_i})<\gamma^{p_i}$,
\end{enumerate}
because for every $j\in[4,\sigma]$, $\otp(C_{\gamma^{p_j}}^{p_j})\le \gamma^{p_2}+\omega\cdot j<\gamma^{p_4}\leq \gamma^{p_j}$.
Additionally, the combination of Clauses \ref{gameiii} and \ref{gameiv} implies the following:
\begin{enumerate}[label=(\roman*),resume]
\item\label{gamevi} for every even $j\in[4,\sigma)$, $\otp(C_{\gamma^{p_j}}^{p_{j}}\setminus\beta_j)=\omega$.
\end{enumerate}

We now turn to the description of the strategy for {\bf II}:
\begin{itemize}[label=$\blacktriangleright$]
\item Start by letting $p_0:=\one_{\mathbb P}$.
\item Once $p_1$ was played, $p_2$ will be an extension of $p_1$ satisfying Clause~\ref{gamei};
\item For every even $i\in[2,\sigma)$ such that $p_{i+1}$ has already been defined, we first let $q_{i+1}$ be some extension of $p_{i+1}$ that decides that some ordinal $\beta_{i+2}$ belongs to $\dot B\setminus(\gamma^{p_{i+1}}+\omega\cdot\sigma+1)$.
By Clause~\ref{c1}, we may assume that $\gamma^{q_{i+1}}>\beta_{i+2}$.
We then define $p_{i+2}$ as follows:

\begin{itemize}[label=\textbullet]
\item $\gamma^{p_{i+2}}:=\gamma^{q_{i+1}}+\omega$;
\item $C_\alpha^{p_{i+2}}:=C_\alpha^{q_{i+1}}$ for every $\alpha\le\gamma^{q_{i+1}}$;
\item $C_{\alpha+1}^{p_{i+2}}:=\{\alpha\}$ for every $\alpha\in\gamma^{p_{i+2}}\setminus \gamma^{q_{i+1}}$;
\item $C^{p_{i+2}}_{\gamma^{p_{i+2}}}:=C^{p_{i}}_{\gamma^{p_{i}}}\cup\{\gamma^{p_i},\beta_{i+2}\}\cup\{\gamma^{q_{i+1}}+n\mid n<\omega\}$;
\item $D^{p_{i+2}}:=D^{q_{i+1}}\cup \{\gamma^{p_{i+2}}\}$.
\end{itemize}

Evidently, $\acc(C^{p_{i+2}}_{\gamma^{p_{i+2}}})=\acc(C^{p_{i}}_{\gamma^{p_{i}}})\cup\{\gamma^{p_i}\}$, so that requirement~\ref{gameiii} is maintained.
Clause~\ref{gameiv} is satisfied by design, and so is Clause~\ref{gameii} in case we are at stage $i=2$.
To verify that $p_{i+2}$ is a legitimate condition, it suffices to focus on Clause~(4)(b) of Definition~\ref{def01},
with $\beta:=\gamma^{p_{i+2}}$ and some $\alpha\in D^{p_{i+2}}\cap\beta$.
That is, to verify that $\otp(C^{p_{i+2}}_{\gamma^{p_{i+2}}}\cap\alpha)<\alpha$ for every $\alpha\in D^{q_{i+1}}$.
As $\max(D^{q_{i+1}})=\gamma^{q_{i+1}}$ and $C^{p_{i+2}}_{\gamma^{p_{i+2}}}\cap\gamma^{q_{i+1}}=C^{p_{i}}_{\gamma^{p_{i}}}\cup\{\gamma^{p_i},\beta_{i+2}\}$,
it suffices to verify that $\otp(C^{p_{i}}_{\gamma^{p_{i}}}\cap\alpha)<\alpha$ for every $\alpha\in D^{q_{i+1}}\cap(\gamma^{p_i}+1)=D^{p_i}$.
For $\alpha<\gamma^{p_i}$, this follows from the fact that $p_i$ is a legitimate condition.
For $\alpha=\gamma^{p_i}$, this follows from Clause~\ref{gamev}.

\item Given a $j\in\acc(\sigma+1)$ such that $\langle p_i\mid i<j\rangle$ has already been determined, define $p_j$ as follows:
\begin{itemize}[label=\textbullet]
\item $\gamma^{p_j}:=\sup_{i<j}\gamma^{p_i}$;
\item $C_\alpha^{p_j}:=C_\alpha^{p_i}$ for every $\alpha<\gamma^{p_j}$, using a large enough $i<j$;
\item $C^{p_j}_{\gamma^{p_j}}:=\bigcup\{ C^{p_{i}}_{\gamma^{p_{i}}}\mid \text{even }i\in[2,j)\}$;
\item $D^{p_j}:=\bigcup_{i<j}D^{p_{i}}\cup \{\gamma^{p_j}\}$.
\end{itemize}

It is clear that requirement~\ref{gameiii} is maintained.
To verify that $p_j$ is a legitimate condition, we verify Clause~(4)(b) of Definition~\ref{def01} with respect to $\beta:=\gamma^{p_j}$ and a given $\alpha\in D^{p_j}\cap\beta$.
Find an even $i<j$ such that $\alpha<\gamma^{p_i}$. Then $C^{p_j}_{\gamma^{p_j}}\cap\alpha=C^{p_i}_{\gamma^{p_i}}\cap\alpha$ and the latter has order-type less than $\alpha$.
Altogether, $p^j$ is a legitimate condition extending $p^i$ for all $i<j$.
\end{itemize}

This completes the presentation of a successful strategy for {\bf II} in the game $\Game_{\sigma+1}(\mathbb P)$.
In addition, $q:=p_\sigma$ satisfies that $q\le p_2$ and all of the following hold:
\begin{itemize}
\item $C^{p_2}_{\gamma^{p_2}}\sq C^{q}_{\gamma^{q}}$;
\item $\otp(C^{q}_{\gamma^{q}})=\otp(C^{p_2}_{\gamma^{p_2}})+\omega\cdot\sigma<\gamma^{q}$;
\item $\{ C^q_{\gamma^q}(\otp(C^{p_2}_{\gamma^{p_2}})+\omega\cdot \iota+1)\mid \iota<\sigma\}$ is equal to $\{\beta_i\mid \text{even }i\in[4,\sigma)\}$
and is forced by $q$ to be subset of $\dot B$.
\end{itemize}

Finally, Clauses \ref{c4} and \ref{c5} are standard and are left to the reader.
\end{proof}

\begin{cor} If $\kappa^{<\kappa}=\kappa$, then $\mathbb P$ preserves all cardinals.
\end{cor}
\begin{proof} By Clauses \ref{c3}--\ref{c5} of Lemma~\ref{lemma02}.
\end{proof}

\begin{cor}\label{cor14} For every $p\in\mathbb P$, for every $\mathbb P$-name $\dot B$ of a cofinal subset of $\kappa$,
there exists a $q\le p$ that forces that the order-type of the intersection of $\nacc(C^q_{\gamma^q})$ and $\dot B$ is equal to $\gamma^q$.
\end{cor}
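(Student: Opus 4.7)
The plan is to iterate Lemma~\ref{lemma02}(2) $\omega$-many times and take a natural limit. First, by Lemma~\ref{lemma02}(1), I may assume (after possibly extending $p$) that $\otp(C^p_{\gamma^p})<\gamma^p$. Then I recursively build a $\le$-descending sequence $\langle q_n\mid n<\omega\rangle$ with $q_0:=p$, together with a sequence $\langle\sigma_n\mid n<\omega\rangle$ of infinite cardinals in $\acc(\kappa)$ satisfying $\gamma^{q_n}<\sigma_n<\kappa$: at stage $n$, apply Lemma~\ref{lemma02}(2) to $q_n$ and $\sigma_n$ to obtain $q_{n+1}$, so that $C^{q_n}_{\gamma^{q_n}}\sqsubseteq C^{q_{n+1}}_{\gamma^{q_{n+1}}}$, $\otp(C^{q_{n+1}}_{\gamma^{q_{n+1}}})=\otp(C^{q_n}_{\gamma^{q_n}})+\omega\cdot\sigma_n<\gamma^{q_{n+1}}$, and $q_{n+1}$ forces a set $S_n$ of $\sigma_n$ successor-position elements of $C^{q_{n+1}}_{\gamma^{q_{n+1}}}$---and hence $\nacc$-elements---into $\dot B$.

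Next, I define $q:=q_\omega$ in the natural way: $\gamma^q:=\sup_n\gamma^{q_n}$, $C^q_\alpha:=C^{q_n}_\alpha$ for $\alpha\le\gamma^{q_n}$, $C^q_{\gamma^q}:=\bigcup_nC^{q_n}_{\gamma^{q_n}}$, and $D^q:=\bigcup_nD^{q_n}\cup\{\gamma^q\}$. To verify $q\in\mathbb P$, the only nontrivial clause of Definition~\ref{def01} is (4)(b) at $\beta=\gamma^q$: for $\alpha\in D^q\cap\gamma^q$, pick $n$ large with $\alpha\in D^{q_n}$ and $\alpha\le\gamma^{q_n}$; then the $\sqsubseteq$-compatibility yields $C^q_{\gamma^q}\cap\alpha=C^{q_n}_{\gamma^{q_n}}\cap\alpha$, whose order-type is below $\alpha$ by the validity of $q_n$.

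Finally, each $S_n$ sits in $(\gamma^{q_n},\gamma^{q_{n+1}})$ and has order-type $\sigma_n$; the pieces are pairwise disjoint and totally ordered ($S_n$ lies entirely below $S_{n+1}$), so the union $\bigcup_nS_n$ has order-type $\sum_n\sigma_n$, which by cardinal absorption for strictly increasing infinite cardinals equals $\sup_n\sigma_n$, and this in turn equals $\sup_n\gamma^{q_n}=\gamma^q$ via the squeeze $\gamma^{q_n}<\sigma_n<\gamma^{q_{n+1}}$. Since $\bigcup_nS_n\subseteq\nacc(C^q_{\gamma^q})\cap\dot B\subseteq\gamma^q$, sandwiching yields the desired equality $\otp(\nacc(C^q_{\gamma^q})\cap\dot B)=\gamma^q$. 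The main obstacle is the choice of the $\sigma_n$'s: this is effortless when $\kappa$ is inaccessible (take $\sigma_n:=|\gamma^{q_n}|^+$), but when $\kappa$ is a successor cardinal one must take care at the outset to keep $\gamma^{q_n}$ below the largest cardinal $\mu<\kappa$ throughout the recursion.
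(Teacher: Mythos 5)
Your overall skeleton is exactly the paper's: extend $p$ via Lemma~\ref{lemma02}(1) so that $\otp(C^p_{\gamma^p})<\gamma^p$, iterate Lemma~\ref{lemma02}(2) $\omega$ many times, take the natural limit condition, and read off the order-type from the blocks forced into $\dot B$. The limit-condition verification you sketch is fine (it is the same as the limit case inside the proof of Lemma~\ref{lemma02}). However, your insistence that each $\sigma_n$ be an \emph{infinite cardinal} with $\gamma^{q_n}<\sigma_n<\kappa$ creates a genuine gap. The corollary is stated for an arbitrary regular uncountable $\kappa$, so $\kappa$ may be a successor, say $\kappa=\mu^+$; and the given condition $p$ may already satisfy $\gamma^{p}\ge\mu$, in which case there is \emph{no} cardinal strictly between $\gamma^{q_0}$ and $\kappa$ and your recursion cannot even begin. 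Your proposed remedy --- ``keep $\gamma^{q_n}$ below $\mu$ throughout'' --- is not available: you do not get to choose $p$, and even if $\gamma^{q_0}<\mu$, Lemma~\ref{lemma02}(2) gives no upper bound on $\gamma^{q_{n+1}}$ beyond $\gamma^{q_{n+1}}>\otp(C^{q_n}_{\gamma^{q_n}})+\omega\cdot\sigma_n$, so nothing keeps the $\gamma^{q_n}$'s below $\mu$; indeed once $\sigma_n$ is close to $\mu$ you are forced past it at the next stage.

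The fix is to drop the cardinality requirement altogether, since $\acc(\kappa)$ only asks $\sigma_n$ to be a limit ordinal: take $\sigma_n:=\gamma^{q_n}$ (or any limit ordinal $\ge\gamma^{q_n}$). Then the block $S_n$ forced into $\dot B$ consists of elements of $\nacc(C^{q_{n+1}}_{\gamma^{q_{n+1}}})$ lying above $\gamma^{q_n}$ and has order-type $\ge\gamma^{q_n}$, so $q$ forces $\otp(\nacc(C^q_{\gamma^q})\cap\dot B)\ge\gamma^{q_n}$ for every $n$, hence $\ge\sup_n\gamma^{q_n}=\gamma^q$; the reverse inequality is automatic because the intersection is a subset of $\gamma^q$. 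This is precisely the paper's argument, and it makes your cardinal-absorption computation (the only place where cardinals were needed) unnecessary. With that change your proof goes through for every regular uncountable $\kappa$; as written, it is only valid when $\kappa$ is inaccessible.
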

\begin{proof} Let $p$ and $\dot B$ as above be given. Define a sequence $\langle p_n\mid n<\omega\rangle$ by recursion on $n<\omega$, as follows.
First, using Lemma~\ref{lemma02}(1), find a $p_0\le p$ for which $\otp(C^{p_0}_{\gamma^{p_0}})<\gamma^{p_0}$.
Then, given $n<\omega$ such that $p_n$ has already been defined,
appeal to Lemma~\ref{lemma02}(2) to obtain a condition $p_{n+1}\le p_n$ such that all of the following hold:
\begin{enumerate}
\item $C^{p_n}_{\gamma^{p_n}}\sq C^{p_{n+1}}_{\gamma^{p_{n+1}}}$;
\item $\otp(C^{p_{n+1}}_{\gamma^{p_{n+1}}})<\gamma^{p_{n+1}}$;
\item $p_{n+1}$ forces that the intersection of $\nacc(C^{p_{n+1}}_{\gamma^{p_{n+1}}}\setminus \gamma^{p_{n}})$ and $\dot B$ has order-type $\ge\gamma^{p_n}$.
\end{enumerate}

Now, define a condition $q$ as follows:
\begin{itemize}
\item$\gamma^q:=\sup_{n<\omega}\gamma^{p_n}$;\footnote{Note that $\cf(\gamma^q)=\omega$. This is aligned with Remark~\ref{rmk33}.}
\item$C_\alpha^{q}:=C_\alpha^{p_n}$ for every $\alpha<\gamma^{q}$, using a large enough $n<\omega$;
\item$C^{q}_{\gamma^q}:=\bigcup\{ C^{p_n}_{\gamma^{p_n}}\mid n<\omega\}$;
\item$D^{q}:=\bigcup_{n<\omega}D^{p_{n}}\cup \{\gamma^q\}$.
\end{itemize}

As in the limit case from the proof of the previous Lemma, $q$ is a legitimate condition.
By Clauses (2) and (3), $q$ forces that the order-type of the intersection of $\nacc(C^q_{\gamma^q})$ and $\dot B$ is equal to $\gamma^q$.
\end{proof}

\begin{lemma}\label{1465} $\mathbb P$ forces that $\kappa^{<\kappa}=\kappa$.
\end{lemma}
\begin{proof} Work in $V[H]$ for $H$ a $\mathbb P$-generic over $V$.
By Lemma~\ref{lemma02}\ref{c1}, we may define a sequence $\vec C:=\langle C_\alpha\mid\alpha<\kappa\rangle$ by letting $C_\alpha:=C^p_\alpha$ for some nonempty condition $p\in H$ with $\gamma^p\ge\alpha$.
For all $\beta,\gamma<\kappa$, denote
$$X_{\beta,\gamma}:=\{\iota<\beta\mid \gamma+\omega\cdot\iota\in C_{\gamma+\omega\cdot\iota+\omega}\}.$$

\begin{claim} For every $X\in[\kappa]^{<\kappa}$,
there are $\beta,\gamma<\kappa$ such that $X=X_{\beta,\gamma}$.
\end{claim}
\begin{proof} By Lemma~\ref{lemma02}\ref{c3}, $V$ and $V[H]$ have the same bounded subsets of $\kappa$.
Thus, it suffices to prove that for every condition $p$ in $\mathbb P$ and every $X\in[\kappa]^{<\kappa}$ in $V$,
there are a condition $q\le p$ and ordinals $\beta,\gamma<\kappa$ such that $q$ forces that $X$ coincides with $X_{\beta,\gamma}$.
To this end, work in $V$ and let $p$ and $X$ be as above.
By possibly extending $p$, we may assume it is nonempty, say $p = (\langle C^p_\alpha \mid \alpha \le \gamma^p \rangle,D^p)$.
Find a large enough $\beta<\kappa$ such that $X\s\beta$.
Define $q = (\langle C^q_\alpha \mid \alpha \le \gamma^q \rangle,D^q)$
with $\gamma^{q}:=\gamma^{p}+\omega\cdot\beta$
and $D^{q}:=D^{p}\cup \{\gamma^{q}\}$, by letting for every $\alpha\le\gamma^q$:
$$C_\alpha^q:=
\begin{cases}
C_\alpha^p,&\text{if }\alpha\le\gamma^p;\\
\{\bar\alpha\},&\text{if }\alpha>\gamma^p\ \&\ \alpha=\bar\alpha+1;\\
\acc(\alpha\setminus\gamma^p),&\text{if }\alpha>\gamma^p\ \&\ \alpha\in\acc(\acc(\gamma^q+1));\\
\{\gamma^p+\omega\cdot\iota+n\mid n<\omega\},&\text{if }\alpha=\gamma^p+\omega\cdot\iota+\omega\ \&\ \iota\in X;\\
\{\gamma^p+\omega\cdot\iota+n\mid 0<n<\omega\},&\text{if }\alpha=\gamma^p+\omega\cdot\iota+\omega\ \&\ \iota\not\in X.
\end{cases}$$

It is clear that $q$ is a legitimate condition extending $p$ and forcing that $X_{\beta,\gamma^p}$ coincides with $X$.
\end{proof}
It immediately follows that $\kappa^{<\kappa}=\kappa$.
\end{proof}

\begin{cor}\label{1499} $\p_{<}(\kappa,2,{\sq},\kappa)$ holds in the generic extension by $\mathbb P$.
\end{cor}
\begin{proof} Work in $V[H]$ for $H$ a $\mathbb P$-generic over $V$.
By Lemma~\ref{lemma02}\ref{c1}, we may define a sequence $\vec C:=\langle C_\alpha\mid\alpha<\kappa\rangle$ by letting $C_\alpha:=C^p_\alpha$ for some nonempty condition $p\in H$ with $\gamma^p\ge\alpha$.
Also note that $D:=\bigcup\{ D^p\mid p\in H\}$ is a club in $\kappa$.
Clearly, $\vec C$ is a coherent $C$-sequence such that $R(\vec C)$ covers $D$.
For every $\alpha\in\acc(\kappa)$, let $X_\alpha:=\{ C_\alpha(\omega\cdot \iota+1)\mid \iota<\otp(\acc(C_\alpha))\}$.\footnote{Here, $C_\alpha(\xi)$
stands for the unique $\beta\in C_\alpha$ satisfying $\otp(C_\alpha\cap\beta)=\xi$.}

Let $\sigma\in\reg(\kappa)$. By Clause~(2) of Lemma~\ref{lemma02},
for every cofinal $B\s\kappa$, there exists an $\alpha\in E^\kappa_\sigma$ such that $X_\alpha$ is a cofinal subset of $\alpha$ and $\sup(X_\alpha\setminus B)<\alpha$.
Together with $\kappa^{<\kappa}=\kappa$ (that we obtain from Lemma~\ref{1465}), this easily implies that $\diamondsuit(E^\kappa_\sigma)$ holds.\footnote{See the proof of \cite[Claim~6.6.2]{paper58}.}
In particular, we may let $\Phi^\omega:\mathcal K(\kappa)\rightarrow\mathcal K(\kappa)$ be the postprocessing function given by \cite[Lemma~3.8]{paper28}.
Define a $C$-sequence $\vec{C^\bullet}:=\langle C^\bullet_\alpha\mid\alpha<\kappa\rangle$ by letting $C_0^\bullet:=\emptyset$,
$C_{\alpha+1}^\bullet:=\{\alpha\}$ for every $\alpha<\kappa$ and $C_\alpha^\bullet:=\Phi^\omega(C_\alpha)$ for every $\alpha\in\acc(\kappa)$.
Then $\vec{C^\bullet}$ is coherent $C$-sequence such that $R(\vec{C^\bullet})\supseteq R(\vec C)\supseteq D$.
Finally, given a sequence $\langle B_i\mid i<\kappa\rangle$ of cofinal subsets of $\kappa$,
let $G$ be the corresponding stationary set given by \cite[Lemma~3.8]{paper28}.
By Corollary~\ref{cor14}, we may now fix arbitrarily large $\gamma\in\acc(\kappa)$ such that $\otp(\nacc(C_\gamma)\cap G)=\gamma$.
It then follows from Clause~(2) of \cite[Lemma~3.8]{paper28} that for any such $\gamma$, for every $i<\gamma$, $\sup(\nacc(C_\gamma^\bullet)\cap B_i)=\gamma$.
So, $\vec{C^\bullet}$ witnesses $\p^-_{<}(\kappa,2,{\sq},\kappa)$.
\end{proof}

\begin{q} Suppose that $\VisL$ and $\kappa$ is an inaccessible that is non-Mahlo.
Does $\p_{<}(\kappa,2,{\sq},\kappa)$ hold?
\end{q}

An affirmative answer may possibly follow from the arguments of \cite[\S4]{paper29}.

\section{Clause~(2) of Theorem~\ref{thmb}}\label{section:thmb2}

Throughout this section, $\kappa$ stands for a regular uncountable cardinal.

All necessary background on set-theoretic trees may be found in standard textbooks, as well as at \cite[\S2]{paper22}.
In addition, for our purpose, we shall need the following definition.
\begin{defn}[Todor\v{c}evi\'c, {\cite[p.~266]{MR908147}}]\label{defn41} A tree $(T,{<_T})$ of height $\kappa$ is \emph{special}
iff there exists a map $g:T\rightarrow T$ satisfying the following:
\begin{itemize}
\item for every non-minimal $x\in T$, $g(x)<_T x$;
\item for every $y\in T$, $g^{-1}\{y\}$ is covered by less than $\kappa$ many antichains.
\end{itemize}
\end{defn}
\begin{remark}\label{rmk42} In case that $\kappa=\lambda^+$ is a successor cardinal, a tree of height $\kappa$ is special (in the above sense)
iff it may be covered by $\lambda$ many antichains \cite[Theorem~14]{MR793235}.
\end{remark}
\begin{defn}
Let $\mathbb{Q}_\kappa$ denote the linear order consisting of all nonempty finite sequences of ordinals in $\kappa$,
with the ordering $q\ll p$ iff either $p\subsetneq q$ or $q(n)<p(n)$ for the least $n<\omega$ such that $q(n)\neq p(n)$.
\end{defn}

The next easy fact is an immediate corollary to a claim from \cite[\S2]{paper71}, but we give the details here for completeness.

\begin{prop}\label{p33} Suppose that $\mathbf T=(T,{<_T})$ is a tree of height $\kappa$
and there exists a map $f:T\rightarrow\mathbb Q_\kappa$ satisfying the following two requirements:
\begin{enumerate}[label=\textup{(\arabic*)}]
\item $f$ is strictly increasing, i.e., for all $x<_Tx'$ in $T$, $f(x)\ll f(x')$;
\item\label{332} the set $\{ \alpha<\kappa \mid f[T_\alpha]\s {}^{<\omega}\alpha \}$ covers a club.
\end{enumerate}

Then $\mathbf T$ is special.
\end{prop}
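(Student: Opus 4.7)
The plan is to construct the specializing function $g\colon T\rightarrow T$ directly from $f$, leveraging the fact that $\mathbb{Q}_\kappa$ is linearly ordered by $\ll$. The pivotal observation is that since $f$ is \emph{strictly} increasing into a linear order, any two comparable nodes $x<_T x'$ satisfy $f(x)\ll f(x')$, so $f(x)\neq f(x')$; equivalently, every fiber $\{x\in T\mid f(x)=p\}$ is an antichain of $\mathbf{T}$. To start the construction, I would fix a club $D\subseteq\kappa$ contained in the set of Clause~\ref{332}, and, by intersecting with the club of limit ordinals, assume without loss of generality that every element of $D$ is a limit ordinal.

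For each non-minimal $x\in T_\alpha$, I would define $g(x)$ to be a specific $<_T$-predecessor of $x$, splitting into three cases: (i) if $\alpha=\gamma+1$ is a successor, let $g(x)$ be the unique predecessor at height $\gamma$; (ii) if $\alpha\in D$, set $\mu(x):=\max(\im(f(x)))+1$, which is strictly below $\alpha$ because $\alpha$ is a limit and $f(x)\in{}^{<\omega}\alpha$ is a non-empty finite sequence of ordinals below $\alpha$, and let $g(x)$ be the predecessor at height $\mu(x)$; (iii) if $\alpha\notin D$ is a limit, set $\nu(x):=\sup(D\cap\alpha)<\alpha$ by closedness of $D$, and let $g(x)$ be the predecessor at height $\nu(x)$. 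Since $\{z\in T\mid z<_T x\}$ is a well-ordered chain of order-type $\alpha$, the predecessor at any prescribed height $\beta<\alpha$ is unique, so $g$ is well-defined and $g(x)<_T x$ in every case.

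To verify the second clause of Definition~\ref{defn41}, I would fix $y\in T_\gamma$ and partition $g^{-1}\{y\}$ by which case of the construction produced $g(x)=y$. The successor case contributes only $x\in T_{\gamma+1}$ above $y$, a single antichain. The $D$-case contributes $x$ with $\max(\im(f(x)))+1=\gamma$, which is nonempty only if $\gamma$ is a successor; in that case $f(x)\in{}^{<\omega}\gamma$, and refining this piece of the preimage by the value of $f(x)$ yields antichains by the pivotal observation, the number of relevant sequences $p\in{}^{<\omega}\gamma$ being at most $|\gamma|^{<\omega}<\kappa$ since $\kappa$ is regular and uncountable. The non-$D$ limit case contributes $x$ with $\sup(D\cap\h(x))=\gamma$; closedness of $D$ forces $\gamma\in D\cup\{0\}$, and then $\h(x)$ ranges over the interval $(\gamma,\gamma^*)$ where $\gamma^*:=\min(D\setminus(\gamma+1))<\kappa$ (or over $[1,\min(D))$ if $\gamma=0$), so each of these $<\kappa$ many heights contributes one antichain. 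Summing across the three cases, $g^{-1}\{y\}$ is covered by fewer than $\kappa$ antichains.

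The construction is rather direct, so I do not anticipate serious obstacles; the main bookkeeping challenge is the case split itself, namely ensuring that each of the three ``types'' of $\h(x)$ contributes only a small pool of antichains to $g^{-1}\{y\}$ without double-counting. The essential combinatorial fact powering each bound is simply that $|\gamma|^{<\omega}<\kappa$ for every $\gamma<\kappa$, which is immediate from $\kappa$ being regular and uncountable.
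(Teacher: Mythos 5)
Your proposal is correct and follows essentially the same route as the paper: a regressive map $g$ defined via $f$ on a club of good limit levels and via $\sup(D\cap\h(x))$ off that club, with the key observation that fibers of $f$ are antichains since $f$ is strictly increasing into the linear order $\mathbb{Q}_\kappa$. The only divergence is bookkeeping: the paper fixes a bijection $\pi:{}^{<\omega}\kappa\leftrightarrow\kappa$ and sends $x$ to its predecessor at height $\pi(f(x))+1$, so each fiber over a successor-height node is a single antichain, whereas you use $\max(\im(f(x)))+1$ and compensate with the (still sufficient) bound $|\gamma|^{<\omega}<\kappa$ by splitting the fiber according to the value of $f(x)$; just remember to define $g$ on minimal nodes as well (e.g.\ $g(x):=x$), since Definition~\ref{defn41} asks for a total map $g:T\rightarrow T$.
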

\begin{proof} Fix a club $C\s\kappa$ such that $f[T_\alpha]\s {}^{<\omega}\alpha$ for every $\alpha\in C$.
Fix a bijection $\pi:{}^{<\omega}\kappa\leftrightarrow\kappa$,
and consider the club $D:=\{\alpha\in\acc(\kappa)\cap C\mid \pi[{}^{<\omega}\alpha]=\alpha\}$.
Clearly, $\pi(f(x))<\h(x)$ for every $x\in T$ with $\h(x)\in D$.

For all $\alpha<\kappa$ and $x\in T$ with $\h(x)>\alpha$,
let $x\restriction\alpha$ denote the unique $y<_Tx$ with $\h(y)=\alpha$.
Define a map $g:T\rightarrow T$, as follows:
$$g(x):=\begin{cases}
x\restriction (\pi(f(x))+1),&\text{if }\h(x)\in D;\\
x\restriction \sup(D\cap\h(x)),&\text{otherwise}.
\end{cases}$$

It is clear that $g(x)=x$ for every minimal $x\in T$ and that $g(x)<_T x$ for every non-minimal $x\in T$.
Now, given $y\in T$, there are two cases to consider:

$\br$ If $\h(y)$ is a successor ordinal,
then for the unique $p\in{}^{<\omega}\kappa$ such that $\h(y)=\pi(p)+1$, for all $x,x'\in g^{-1}\{y\}$,
we have $f(x)=p=f(x')$ and hence $x,x'$ are incomparable. That is, $(g^{-1}\{y\},{<_T})$ is an antichain.

$\br$ Otherwise, $\epsilon:=\h(y)$ is a (possibly zero) limit ordinal, and then $g^{-1}\{y\}$ is a subset of $\{ x\in T\mid \h(x)<\min(D\setminus(\epsilon+1))\}$,
which is a set consisting of fewer than $\kappa$ many levels of $T$.
In particular, $g^{-1}\{y\}$ is the union of less than $\kappa$ many antichains of $(T,{<_T})$.
\end{proof}

\begin{defn}[{\cite[Definition~2.3]{paper23}}] A set $T$ is a \emph{streamlined tree} iff there exists some cardinal $\theta$ such that $T\s{}^{<\theta}H_\theta$
and, for all $t\in T$ and $\beta<\dom(t)$,
$t\restriction\beta\in T$.
\end{defn}

We identify a streamlined tree $T$ with the abstract set-theoretic tree $(T,{\subsetneq})$.

\begin{defn}\label{def45}
For a subset $T\s{}^{<\kappa}H_\kappa$ and a positive integer $n$,
denote $T^n:=\{\vec x:n\stackrel{1-1}{\longrightarrow} T\mid i\mapsto\dom(\vec{x}(i))\text{ is constant}\}$.
The ordering $<_{T^n}$ of $T^n$ is defined as follows:
$$\vec x<_{T^n}\vec y\iff \bigwedge_{i<n}\vec x(i)\subsetneq\vec y(i).$$
\end{defn}

Note that the sequences in $T^n$ are injective.
This requirement was not imposed back in Section~\ref{secthma} (see Definition~\ref{square-def}) because it is inessential in the context in which the main notion of a square is $(X_T)^2$,
and in that section we opted to prefer simplicity over generality.

\begin{defn}[Derived trees] For a streamlined tree $T$ and a positive integer $n$,
an \emph{$n$-derived tree of $T$} is a collection of the form
$$T(w_0,\ldots,w_{n-1}):=\{ (x_0,\ldots,x_{n-1})\in T^n\mid \forall i<n\,(x_i\cup w_i\in T)\},$$
for some node $(w_0,\ldots,w_{n-1})\in T^n$.
\end{defn}
\begin{defn} A streamlined $\kappa$-tree $T$ is \emph{$(n+1)$-free} iff all of its $n$-derived trees are $\kappa$-Souslin.
\end{defn}
\begin{remark} $(T^n,{<_{T^n}})$ is the union of all $n$-derived trees of $T$.
In particular, if $(T^n,{<_{T^n}})$ admits a map as in Proposition~\ref{p33}, then all $n$-derived trees of $T$ are special.
\end{remark}

A forcing construction of an $n$-free $\aleph_1$-Souslin tree all of whose derived trees of dimension $n$ are special was given in \cite[Corollary~5.5]{MR4080091}.
A uniform combinatorial construction of an $n$-free $\lambda^+$-Souslin tree $T$ such that $(T^n,{<_{T^n}})$ admits a strictly increasing map to $\mathbb Q_\lambda$ was given in \cite[\S6]{paper65},
assuming $\p_\lambda(\lambda^+,2,{\sq},\lambda^+)$.
The construction given there is abstract enough to generalize to limit cardinals $\kappa$ assuming $\p_\kappa(\kappa,2,{\sq},\kappa)$, where the map now goes to $\mathbb Q_\kappa$.
In view of Proposition~\ref{p33}, the challenge remaining is to ensure that Clause~\ref{332} of the proposition will hold for this map.
This naturally depends on the very $C$-sequence used in the construction,
and this is where Clause~\ref{321} of Definition~\ref{def32} demonstrates its utility.

\begin{thm}\label{thm710b}
Suppose that $\p_{<}(\kappa,2,\allowbreak{\sq},\kappa)$ holds, and $\chi\in[2,\omega)$.
Then there exists a $\chi$-free, streamlined $\kappa$-Souslin tree $T$ and strictly increasing $f:T^\chi\rightarrow\mathbb Q_\kappa$ such that
$f[(T_\alpha)^\chi]\s {}^{<\omega}\alpha$ for club many $\alpha<\kappa$.
In addition, $T$ is slim, prolific and club-regressive.\footnote{The definitions of all additional properties may found at \cite[\S4.2]{paper65}.}
\end{thm}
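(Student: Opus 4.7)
The plan is to lift the construction of the $n$-free Souslin tree carrying a strictly increasing map to $\mathbb Q_\lambda$ from \cite[\S6]{paper65} to the present inaccessible setting, with $\p_{<}(\kappa,2,{\sq},\kappa)$ playing the role that $\p_\lambda(\lambda^+,2,{\sq},\lambda^+)$ played there. Fix a $C$-sequence $\vec C=\langle C_\alpha\mid \alpha<\kappa\rangle$ witnessing $\p_{<}(\kappa,2,{\sq},\kappa)$, together with a $\diamondsuit(\kappa)$-sequence $\langle\Omega_\alpha\mid\alpha<\kappa\rangle$. I will recursively define, level by level, a streamlined tree $T\s{}^{<\kappa}\kappa$ and, in parallel, a strictly $\ll$-increasing map $f:T^\chi\rightarrow\mathbb Q_\kappa$, arranging along the way that $T$ is prolific, slim and club-regressive, and that all $(\chi-1)$-derived trees of $T$ are $\kappa$-Souslin.

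At successor levels one extends $T$ prolifically and updates $f$ coordinatewise in the trivial manner dictated by the ordering on $\mathbb Q_\kappa$. At a limit $\alpha<\kappa$, for each $x\in T\restriction C_\alpha$ one extracts a single branch $\mathbf b_x^\alpha$ through $T\restriction\alpha$ extending $x$, stitched together coherently using Clause~\ref{322} of Definition~\ref{def32}, and sets $T_\alpha:=\{\mathbf b_x^\alpha\mid x\in T\restriction C_\alpha\}$. The values of $f$ on $\chi$-tuples of new limit nodes are assigned finite sequences in $\mathbb Q_\kappa$ encoding, in the manner of \cite[\S6]{paper65}, the shared history of the tuple along $C_\alpha$ together with the ``signatures'' $\otp(C_\beta\cap\alpha)$ for the finitely many $\beta$ involved in the recursive prescription. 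Clause~\ref{323} of Definition~\ref{def32}, together with $\langle\Omega_\alpha\mid\alpha<\kappa\rangle$, is then used in the standard way to ensure that for every potential maximal antichain $A$ in any $(\chi-1)$-derived tree, stationarily many branch-choices pass through $A$ so as to seal it below level $\alpha$; this establishes $\chi$-freeness.

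The main obstacle --- and the only essentially new point compared with \cite[\S6]{paper65} --- is the verification that $f[(T_\alpha)^\chi]\s{}^{<\omega}\alpha$ for club many $\alpha<\kappa$. This is where Clause~\ref{321} of Definition~\ref{def32} enters decisively: $R(\vec C)$ covers a club, and by design every coordinate of an $f$-value at level $\alpha$ is either an element of $\alpha$ or an ordinal of the form $\otp(C_\beta\cap\alpha)$ for some $\beta\in[\alpha,\kappa)$ determined by the recursion. For $\alpha\in R(\vec C)$ each such $\otp(C_\beta\cap\alpha)$ is strictly less than $\alpha$, so the entire $f$-value lies in ${}^{<\omega}\alpha$. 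This exactly substitutes for the inequality $\otp(C_\beta)\le\lambda<\lambda^+$ that was automatic in the successor setup of \cite[\S6]{paper65}; once this is in place, all remaining features --- Souslinness of $T$, $\chi$-freeness, slimness, prolificity and club-regressivity --- follow by transcribing the corresponding arguments from \cite[\S6]{paper65} mutatis mutandis.
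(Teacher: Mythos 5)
Your overall route is the same as the paper's: run the abstract construction of \cite[Theorem~6.11]{paper65} with $\kappa$ in place of $\lambda$, guided by a witness to $\p_{<}(\kappa,2,{\sq},\kappa)$, and then use Clause~\ref{321} of Definition~\ref{def32} to verify the new requirement that $f[(T_\alpha)^\chi]\s{}^{<\omega}\alpha$ on a club. However, your one-line justification of that crux step has a gap. In the construction, the extra coordinate appended to $f_\alpha(\vec w)$ at a limit level $\alpha$ is (an ordinal translate of) the order type of a tail of the ladder at $\alpha$ itself, i.e.\ it concerns the case $\beta=\alpha$ in your notation; and for $\beta=\alpha$, membership of $\alpha$ in $R(\vec C)$ gives no bound whatsoever, since $R(\vec C)$ only quantifies over $\beta\in(\alpha,\kappa)$. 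This is not a removable quibble: as noted in Remark~\ref{rmk33}, any $\vec C$ satisfying Clauses \ref{322} and \ref{323} must have stationarily many $\alpha$ with $\otp(C_\alpha)=\alpha$, and such $\alpha$ may well lie inside the club covered by $R(\vec C)$, so the inequality you assert simply fails at those levels.

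The correct verification requires a case split that your sketch omits. If $\alpha$ is an avoiding level of the derived sequence $\vec D$ (i.e.\ $\alpha\in A(\vec D)$), then the definition of $f_\alpha$ appends no coordinate depending on $\otp(D_\alpha)$, and one only needs $\gamma<\alpha$ (which holds since $\chi$ is finite) together with restricting to a club of $\alpha$'s below which the $f$-values of lower levels already lie in ${}^{<\omega}\alpha$. If $\alpha\notin A(\vec D)$, then some $\beta>\alpha$ has $\sup(C_\beta\cap\alpha)=\alpha$, so $\alpha\in\acc(C_\beta)$; coherence (Clause~\ref{322}) gives $C_\alpha=C_\beta\cap\alpha$, and only now does $\alpha\in R(\vec C)$, applied to that $\beta>\alpha$, yield $\otp(C_\alpha)<\alpha$ and hence $\otp(D_\alpha)<\alpha$. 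Finally, since the appended coordinate has the form $\xi+\sigma$ with $\xi<\alpha$ and $\sigma\le\otp(D_\alpha)$, you should also shrink the club to indecomposable $\alpha$'s in order to conclude $\xi+\sigma<\alpha$. With these amendments your argument coincides with the paper's proof; the remaining features (Souslinness, $\chi$-freeness, slimness, prolificity, club-regressivity) are indeed imported from \cite[\S6]{paper65} exactly as you propose.
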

\begin{proof} Let $\vec C=\langle C_\alpha\mid\alpha<\kappa\rangle$ be a $C$-sequence satisfying Clauses \ref{321}--\ref{323} of Definition~\ref{def32}.
Derive $\vec D=\langle D_\alpha\mid\alpha\in\acc(\kappa)\rangle$ from $\vec C$ as in the beginning of the proof of \cite[Theorem~6.11]{paper65}
and note that $\vec D$ is a coherent $C$-sequence with $R(\vec D)=R(\vec C)$.
Running the exact same abstract proof of \cite[Theorem~6.11]{paper65}, modulo replacing $\lambda$ by $\kappa$ throughout,
we obtain a $\chi$-free, slim, prolific, club-regressive, streamlined
$\kappa$-Souslin tree $T$ and a strictly increasing map $f:T^\chi\rightarrow\mathbb Q_\kappa$.
Thus, we are left with verifying that $f[(T_\alpha)^\chi]\s{}^{<\omega}\alpha$ for club many $\alpha<\kappa$.

As $\chi$ is finite and $T$ is a $\kappa$-tree, we may fix a club $E\s\kappa$ such that,
for every $\alpha \in E$ and every $\gamma<\alpha$, $f[(T_\gamma)^\chi]\s{}^{<\omega}\alpha$. As $R(\vec D)$ covers a club, we may shrink $E$ and also assume it is a subset of $R(\vec D)$
and that all ordinals in $E$ are indecomposable, that is, $\beta+\gamma<\alpha$ for all $\alpha \in E$ and all $\beta,\gamma<\alpha$.

Now, given $\alpha\in E$ and $\vec{w}=\langle w_i\mid i<\chi\rangle$ in $(T_\alpha)^\chi$,
we remind the reader that the definition of $f_\alpha(\vec w)$ (right after \cite[Claim~6.11.6]{paper65}) went as follows.
First, for each $i<\chi$, we found an $x_i\in T\restriction D_\alpha$ of minimal height such that $\mathbf{b}_{x_i}^\alpha=w_i$.
Then we set $\gamma:=\sup\{\dom(x_i)\mid i<\chi\}$,
and then we denoted $(\varphi_1\circ f_\gamma)(\langle w_i\restriction\gamma\mid i<\chi\rangle)$ by $p^\smallfrown\langle\xi\rangle$.
Finally, as $\chi$ is an integer, it follows that $\gamma<\alpha$. Thus, there are two cases to consider:
\begin{itemize}[label=$\blacktriangleright$]
\item If $\alpha\in A(\vec{D})$, then $f_\alpha(\vec w)=p$ which is an element of ${}^{<\omega}\alpha$, since $\gamma<\alpha$ and $\alpha\in E$.
\item Otherwise, $f_\alpha(\vec w)=p^\smallfrown\langle\xi+\sigma\rangle$, where $\sigma := \otp(D_\alpha\setminus(\gamma+1))$.
As $\alpha\in R(\vec C)\setminus A(\vec C)$ and since $\vec D$ is coherent, we infer that $\otp(D_\alpha)<\alpha$.
In addition, since $\alpha\in E$ and $\gamma<\alpha$, $p^\smallfrown\langle\xi\rangle$ is in ${}^{<\omega}\alpha$.
Altogether, $p^\smallfrown\langle\xi+\sigma\rangle$ is an element of ${}^{<\omega}\alpha$. \qedhere
\end{itemize}
\end{proof}

\section*{Acknowledgments}
The first author was supported by the Shamoon College of Engineering Young Research Grant YR/08/Y21/T2/D3.
The second and third authors were supported by the Israel Science Foundation (grant agreement 203/22).

Portions of this paper were presented at the BIU Set Theory seminar (by the third author, April 2024,
and by the second author, January 2025), at the Toronto Set Theory seminar (by the first author, October 2025),
and at the HUJI Set Theory seminar (by the second author, November--December 2025), and at the RIMS Workshop on Set Theory (by the first author, December 2025).
We thank the participants for their feedback.

\end{document}